\documentclass{siamart1116}
\usepackage{amsmath, amssymb, bm, mathrsfs}
\usepackage{color, graphicx,subcaption,accents,cite}
  \makeatletter
\def\widebar{\accentset{{\cc@style\underline{\mskip10mu}}}}
\makeatother

\def\theenumi{\alph{enumi})}
\def\labelenumi{\theenumi}
\def\theenumii{(\roman{enumii})}
\def\labelenumii{\theenumii}

 \makeatletter

\@addtoreset{equation}{section}
\makeatother


\usepackage{lipsum}
\usepackage{amsfonts}
\usepackage{graphicx}
\usepackage{epstopdf}
\usepackage{algorithmic}
\ifpdf
  \DeclareGraphicsExtensions{.eps,.pdf,.png,.jpg}
\else
  \DeclareGraphicsExtensions{.eps}
\fi

\numberwithin{theorem}{section}

\newcommand{\TheTitle}{Strong Stability
of Sampled-data  Riesz-spectral Systems} 
\newcommand{\TheAuthors}{}

\headers{Strong Stability
	of Sampled-data Systems}{\TheAuthors}

\title{{\TheTitle}\thanks{Submitted to the editors DATE.
\funding{This work was supported by JSPS KAKENHI Grant Number JP20K14362.}}}

\author{
	Masashi Wakaiki\thanks{Graduate School of System Informatics, Kobe University, Nada, Kobe, Hyogo 657-8501, Japan
		(\email{wakaiki@ruby.kobe-u.ac.jp}).}
}

\usepackage{amsopn}


\ifpdf
\hypersetup{
  pdftitle={\TheTitle},
  pdfauthor={\TheAuthors}
}
\fi

\newtheorem{assumption}[theorem]{Assumption}
\newtheorem{example}[theorem]{Example}
\newtheorem{remark}[theorem]{Remark}

\newcommand{\im}{\mathop{\rm Im}\nolimits}

\newcommand{\re}{\mathop{\rm Re}\nolimits}

\newcommand{\ran}{\mathop{\rm ran}\nolimits}

\begin{document}

\maketitle

\begin{abstract}
Suppose that a continuous-time linear infinite-dimensional system
with a static state-feedback controller is strongly stable.
We address the following question:
If we convert the continuous-time controller to
a sampled-data controller by applying an idealized sampler and
a zero-order hold, will the resulting sampled-data system be
strongly stable for all sufficiently small sampling periods?
In this paper, we restrict our attention to the situation where
the generator of the open-loop system is a Riesz-spectral operator
and its point spectrum has a limit point at the origin.
We present conditions under which the answer to 
the above question is affirmative.
In the robustness analysis, we show that 
the sufficient condition for strong stability obtained in 
the Arendt-Batty-Lyubich-V\~u theorem
is preserved between 
the original continuous-time system and
the sampled-data system
under fast sampling.
\end{abstract}

\begin{keywords}
	infinite-dimensional systems, sampled-data control, stabilization,
	strong stability, robustness
\end{keywords}

\begin{AMS}
	47A55, 47D06, 93C25, 93C57, 93D15
\end{AMS}

\section{Introduction}
We consider systems with state space $X$ and input space $\mathbb{C}$
of the form
\begin{equation}
\label{eq:plant_intro}
	\dot x(t) = Ax(t) + Bu(t),\quad t\geq 0;\qquad x(0) = x^0 \in X,
\end{equation}
where $X$ is a Hilbert space, $A$ is the generator of a strongly
continuous semigroup $(T(t) )_{t\geq 0}$ on $X$, and
$B$ is a bounded linear operator from $\mathbb{C}$ to $X$.
Suppose that a continuous-time feedback control $u(t) = Fx(t)$,
where $F$ is a bounded linear operator from $X$ to $\mathbb{C}$,
achieves the strong stability of the closed-loop system in the sense that 
$A+BF$ generates a strongly stable semigroup
$(T_{BF}(t) )_{t\geq 0}$ on $X$, i.e., 
\[
\lim_{t\to \infty}\|T_{BF}(t) x^0\| = 0\qquad \forall x^0 \in X.
\]
Instead of this continuous-time controller, we use
the following digital controller with an idealized sampler and a zero-order hold:
\begin{equation}
\label{eq:controller_intro}
u(t) = Fx(k\tau),\quad k\tau \leq t < (k+1)\tau,
\end{equation}
where $\tau>0$ is the sampling period.
If the sampling period $\tau$ is sufficiently small, then
the control input $u$ generated by the digital controller can be
almost identical to the one generated by the continuous-time controller.
Therefore, we would expect that the sampled-data 
system \eqref{eq:plant_intro} and \eqref{eq:controller_intro}
is also strongly stable in the sense that 
\[
\lim_{t\to \infty} \|x(t)\| = 0 \qquad \forall x^0 \in X.
\]
Our objective is to show that a certain class of infinite-dimensional systems possess
this  robustness property with respect to sampling.

In the finite-dimensional case,
stability is preserved for all sufficiently small sampling periods.
This result has been extended to the exponential stability 
of some classes of infinite-dimensional systems in \cite{Logemann2003,Rebarber2006}, but
even exponential stability is much more delicate in the infinite-dimensional
case \cite{Rebarber2002}.
Sampled-data systems are ubiquitous in computer-based control systems, and
various sampled-data control problems have been
studied for infinite-dimensional systems; for example, stabilization 
\cite{Tarn1988, Rebarber1998, Logemann2005, Logemann2013, Karafyllis2018,Kang2018Automatica,Wakaiki2020SCL,
	Lin2020} and output regulation 
\cite{Logemann1997, Ke2009SCL, Ke2009SIAM, Ke2009IEEE, Wakaiki2019}.
Robustness of strong stability with respect to sampling has been
posed as an open problem in \cite{Rebarber2006MTNS}, and it has not been solved yet.

Strong stability of strongly continuous semigroups is rather weak compared with
exponential stability. In fact, exponential stability is preserved under all sufficiently
small bounded perturbations,
whereas it is easy to find a strongly stable semigroup and an arbitrarily 
small perturbation such that the perturbed semigroup is unstable; see, e.g.,
Section~1 of \cite{Paunonen2011}.
The difficulty of robustness analysis of strong stability arises from
the high level of generality of strong stability. 
For example, define 
the bounded linear operator $A_1$ on $\ell^2(\mathbb{C})$
by
\begin{equation}
\label{eq:A1_def}
A_1 x := \sum_{n=1}^{\infty} 
-\frac{1}{n}
\langle x,\phi_n \rangle \phi_n
\end{equation}
and the operator $A_2$ on  $\ell^2(\mathbb{C})$  by
\begin{equation}
\label{eq:A2_def}
A_2 x := \sum_{n=1}^{\infty} \left(
-\frac{1}{n} + in
\right) \langle x,\phi_n \rangle \phi_n
\end{equation}
with domain
\[
D(A_2) := \left\{
x \in \ell^2(\mathbb{\mathbb{C}}) :
\sum_{n=1}^{\infty} n^2 |\langle x , \phi_n \rangle |^2 < \infty
\right\},
\]
where $\{\phi_n:n \in \mathbb{N}\}$ 
is the standard basis of $\ell^2(\mathbb{\mathbb{C}})$. Both operators $A_1$ and $A_2$
generate strongly stable semigroups. However,
the behaviors of the semigroups and the spectral properties of $A_1$ and $A_2$
are quite  different.
Focusing on important subclasses of strongly stable semigroups,
the author of \cite{Paunonen2011,
	Paunonen2012SS,Paunonen2013SS,Paunonen2014JDE,Paunonen2015,Paunonen2015Springer} 
has studied robustness of strong
stability. To study strong stability of delay semigroups, 
perturbation results for strongly stable semigroups have been developed in \cite{Rastogi2020}.
The preservation of strong stability under discretization via the Cayley transformation has been
investigated in \cite{Guo2006, Besseling2010}.
We can regard discretization by sampling as a perturbation, but
this structured perturbation has not been investigated in the above previous studies.

In this paper, we concentrate on the situation where 
the system \eqref{eq:plant_intro} is a Riesz-spectral system, i.e., 
the generator $A$
is a Riesz-spectral operator; see Definition~\ref{def:RSO} below for
the definition of Riesz-spectral operators.
We further assume that $A$ has no eigenvalues on 
the imaginary axis and only
finitely many eigenvalues in
$\{\lambda \in \mathbb{C}\setminus \{ 0\}: \re \lambda > -\alpha,~|\arg \lambda| < \pi/2 + \delta \}$ (the gray area in Fig.~\ref{fig:C_Sigma})
for some $\alpha >0$ and $0<\delta \leq \pi/2$ but that 
there exists a sequence of the eigenvalues of $A$ such that 
it is contained in the sector
$\{\lambda \in \mathbb{C}\setminus \{ 0\}: \pi/2 + \delta \leq |\arg \lambda| \leq \pi \}$ 
and converges to $0$. Consequently, $0$ belongs to  the continuous spectrum of $A$.
For example, the operator $A_1$ given in \eqref{eq:A1_def} 
has this spectral property, but $A_2$ in \eqref{eq:A2_def} does not.
The sectorial constraint on the eigenvalues
avoids any losses of high-frequency information 
caused by sampling. 
Such a sectorial constraint on the spectrum of the generator $A$
has been also placed in the previous study \cite{Logemann2003} in order to 
prove that  exponential stability is preserved under fast sampling
in the case of boundary or pointwise control.
\begin{figure}[tb]
	\centering
	\includegraphics[width = 5cm]{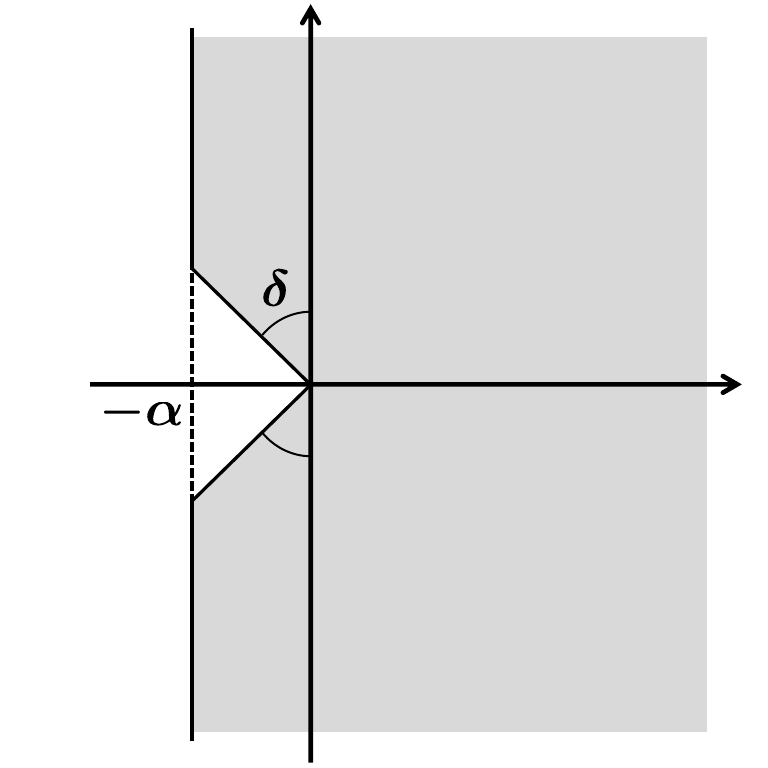}
	\caption{Set $\{\lambda \in \mathbb{C}
		\setminus \{ 0\}: \re \lambda > -\alpha,~|\arg \lambda| < \pi/2 + \delta \}$.}
	\label{fig:C_Sigma}
\end{figure}

Another important assumption of this study is that $A+BF$ 
satisfies the sufficient condition for strong stability obtained 
in the well-known  Arendt-Batty-Lyubich-V\~u theorem~\cite{Arendt1998,Lyubich1988}, that is,
$\sup_{t\geq 0} \|T_{BF}(t)\| < \infty$, $\sigma_p(A+BF) \cap i \mathbb{R} = \emptyset$, 
and $\sigma(A+BF) \cap i \mathbb{R} = \{0\}$, where $\sigma_p(A+BF)$ and $\sigma(A+BF)$
denote the point spectrum and the spectrum of $A+BF$, respectively.
It is straightforward to show that 
the sampled-data system \eqref{eq:plant_intro} and \eqref{eq:controller_intro} is 
strongly stable if and only if the discrete semigroup $(\Delta(\tau)^k)_{k \in \mathbb{N}}$ on
$X$, where 
\[
\Delta(\tau) := T(\tau) + \int^\tau_0 T(s)BF ds,
\]
is strongly stable, i.e., 
\[
\lim_{k \to \infty} \|\Delta(\tau)^kx^0 \| = 0\qquad \forall x^0 \in X;
\]
see Section~\ref{sec:inf_dim_SD_sys}.
Then the robustness analysis of strong stability with respect to sampling
becomes the problem of determining whether or not 
the discrete semigroup $(\Delta(\tau)^k)_{k \in \mathbb{N}}$ 
is strongly stable for all sufficiently small $\tau>0$.
To check the strong stability of $(\Delta(\tau)^k)_{k \in \mathbb{N}}$, we use the discrete version of 
the Arendt-Batty-Lyubich-V\~u theorem.
More precisely, we prove that 
$\sup_{k \in \mathbb{N}} \|\Delta(\tau)^k\| < \infty$, 
$\sigma_p(\Delta(\tau)) \cap \mathbb{T} = \emptyset$, and
$\sigma(\Delta(\tau)) \cap \mathbb{T} = \{1\}$.
In summary, we here show that 
if the continuous-time closed-loop operator $A+BF$
satisfies the sufficient condition for strong stability 
in the continuous case 
of the Arendt-Batty-Lyubich-V\~u theorem, then
the discretized closed-loop operator $\Delta(\tau)$
also satisfies the sufficient condition of the discrete counterpart
for all sufficiently small sampling periods $\tau >0$.
This means that 
the sufficient condition for strong stability obtained in 
 the Arendt-Batty-Lyubich-V\~u theorem
 is preserved between the original continuous-time system
 and the sampled-data system under fast sampling.

This paper is organized as follows.
In Section~\ref{sec:inf_dim_SD_sys}, we first review
useful results on strong stability and Riesz-spectral operators and then
state our main result on robustness of strong stability with respect to sampling.
To prove this result, we study the spectrum of $\Delta(\tau)$ in Section~\ref{sec:spectrum}.
In Section~\ref{sec:power_boundedness}, we investigate
the boundedness of the discrete semigroup $(\Delta(\tau)^k)_{k \in \mathbb{N}}$
in order to complete the proof of the main result.
Concluding remarks are made in Section~\ref{sec:conclusion}.

\subsection*{Notation and terminology}
For $\alpha \in \mathbb{R}$ and $r>0$, we define
\[
\mathbb{C}_\alpha := 
\{
\lambda  \in \mathbb{C}: \re \lambda  > \alpha
\},\quad 
\mathbb{D}_r := 
\{
\lambda  \in \mathbb{C}: |\lambda | <r
\},\quad 
\mathbb{E}_r := 
\{
\lambda  \in \mathbb{C}: |\lambda | > r
\}.
\]
We denote the unit circle by $\mathbb{T} := \{
\lambda  \in \mathbb{C} : |\lambda | = 1
\}$ and
the imaginary axis by $i\mathbb{R} := 
\{ 
i \omega : \omega \in \mathbb{R}
\}$.
For $\delta \in (0,\pi]$, we define the sector
$\Sigma_{\delta} := 
\{
\lambda  \in \mathbb{C} \setminus \{0\} :
|\arg \lambda | < \delta
 \}$.
Let $X$ and $Y$ be Banach spaces. For a linear operator $A:X\to Y$,
we denote by $D(A)$, $\ran (A)$, and $\ker (A)$ the domain, the range, and
the kernel of $A$, respectively. 
The space of all bounded linear operators from $X$ to $Y$ is denoted by $\mathcal{L}(X,Y)$,
and we define $\mathcal{L}(X) := \mathcal{L}(X,X)$.
For a linear operator $A: D(A) \subset X \to X$, we denote by 
$\sigma(A)$, $\sigma_p(A)$, and $\rho(A)$ the spectrum, the point spectrum, and
the resolvent set of $A$, respectively.
The resolvent operator is denoted by $R(\lambda,A) = (\lambda I - A)^{-1}$
for $\lambda \in \rho (A)$.
For a set $S \subset X$ and a linear operator 
$A: D(A) \subset X \to Y$, we write for $A|_S$ 
the restriction of $A$ to $S$, i.e., $A|_S x = Ax$ with domain
$D(A|_S) := D(A) \cap S$.	
If $X$ is a Hilbert space, then we denote 
the inner product by $\langle x,y\rangle$ for $x,y \in X$ and
the Hilbert space adjoint
by $A^*$ for a linear operator $A$ with dense domain in $X$.
Sequences $\{\phi_n:n \in \mathbb{N}\}$ and  
$\{\psi_n:n \in \mathbb{N}\}$ on a Hilbert space 
are called {\em biorthogonal} if
\[
\langle \phi_n, \psi_m \rangle = 
\begin{cases}
1 & \text{if $n=m$} \\
0 & \text{otherwise}.
\end{cases}
\] 

Let $X$, $U$, and $Y$ be Banach spaces,
$A$ generate a strongly continuous semigroup on $X$, 
$B \in \mathcal{L}(U,X)$, $C \in \mathcal{L}(X,Y)$, and $\beta \in \mathbb{R}$.
The control system $(A,B,-)$ is called {\em $\beta$-exponentially stabilizable} if
there exists $F \in \mathcal{L}(X,U)$ such that the growth bound of 
the semigroup generated by $A+BF$ is less than $\beta$.
If $(A,B,-)$ is $0$-exponentially stabilizable, then it is called 
{\em exponentially stabilizable}.
The control system $(A,-,C)$ is called 
{\em$\beta$-exponentially detectable} if 
there exists $L \in \mathcal{L}(Y,X)$ such that the growth bound of 
the semigroup generated by $A+LC$ is less than $\beta$.
If $(A,-,C)$ is $0$-exponentially detectable, then it is called 
{\em exponentially detectable}.
A strongly continuous  semigroup $(T(t))_{t\geq 0}$ 
on $X$ is called {\em uniformly bounded}  if $\sup_{t \geq 0} \|T(t)\| <\infty$
and {\em strongly stable} if 
$\lim_{t\to \infty} T(t)x = 0
$ for every $x \in X$.
By a {\em discrete semigroup} on $X$, we mean a family $(\Delta^k)_{k \in \mathbb{N}}$
of operators, where $\Delta \in \mathcal{L}(X)$.
A discrete semigroup 
$(\Delta^k )_{k \in \mathbb{N}}$ on $X$ is called 
	{\em power bounded}  
if $\sup_{k \in \mathbb{N}} \|\Delta^k\| <\infty$ and
	{\em strongly stable}  
	if $\lim_{k\to \infty} \|\Delta^kx\| = 0$ for every $x \in X$.

\section{Infinite-dimensional sample-data system}
\label{sec:inf_dim_SD_sys}
Let $X$ be a Hilbert space, and
consider the following sampled-data system with 
state space $X$: 
\begin{subequations}
	\label{eq:sampled_data_sys}
	\begin{align}
	\dot x(t) &= Ax(t) + Bu(t),\quad t\geq 0;\qquad x(0) = x^0 \in X \\
	u(t) &= Fx(k\tau),\quad k\tau \leq t < (k+1)\tau,
	\end{align}
\end{subequations}
where 
$x(t) \in X$ is the state, $u(t) \in \mathbb{C}$ is the control input, 
$\tau>0$ is the sampling period,
$A: D(A) \subset X \to X$ generates a strongly continuous semigroup 
$(T(t) )_{t\geq 0}$ on $X$,
$B \in \mathcal{L}(\mathbb{C},X)$ is the control operator, and
$F \in \mathcal{L}(X,\mathbb{C})$ is the feedback operator.

\begin{definition}
	{\em
	The sampled-data system \eqref{eq:sampled_data_sys} is called
	{\em strongly stable} if 
	\[\lim_{t\to \infty} \|x(t)\| = 0
	\] 
	for every initial  state $x^0 \in X$.
}
\end{definition}

The objective of this paper is to show that
if the strongly continuous semigroup $(T_{BF}(t) )_{t\geq 0}$ 
generated by $A+BF$
is strongly stable,
then the sampled-data system \eqref{eq:sampled_data_sys}
is also strongly stable for all sufficiently small sampling periods $\tau>0$.

For $t\geq 0$,
define $S(t) \in \mathcal{L}(\mathbb{C},X)$ and $\Delta(t) \in \mathcal{L}(X)$ by
\begin{equation}
\label{eq:S_def}
S(t) := \int^t_0 T(s) Bds,\quad 
\Delta(t) := T(t) + S(t) F,
\end{equation}
respectively.
Then the state $x$ of 
the sampled-data system \eqref{eq:sampled_data_sys}
satisfies
\begin{equation}
\label{eq:discretized_sys}
x\big((k+1)\tau\big) = \Delta(\tau) x(k\tau)\qquad \forall k\in \mathbb{N} \cup \{0\}.
\end{equation}

By
the following proposition,
it suffices to investigate the strong stability of the discrete semigroup
$(\Delta(\tau)^k)_{k \in \mathbb{N}}$
in order to study the strong stability of 
the sampled-data system \eqref{eq:sampled_data_sys}.
\begin{proposition}
	\label{prop:SD_DT}
	The sampled-data system \eqref{eq:sampled_data_sys} is strongly stable
	if and only if the discrete semigroup $(\Delta(\tau)^k)_{k \in \mathbb{N}}$
	is strongly stable.
\end{proposition}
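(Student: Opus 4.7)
The plan is to reduce both strong stability notions to statements about the sampled sequence $(x(k\tau))_{k\geq 0}$, using the variation-of-parameters formula on each interval $[k\tau,(k+1)\tau)$. First I would verify the discrete update \eqref{eq:discretized_sys}: since $u$ is constant on each sampling interval and equal to $Fx(k\tau)$, the mild solution of the first equation in \eqref{eq:sampled_data_sys} evaluated at $t=(k+1)\tau$ gives
\[
x\big((k+1)\tau\big) = T(\tau)x(k\tau) + \int_0^{\tau}T(\tau-r)B\,dr\,Fx(k\tau) = \Delta(\tau)x(k\tau),
\]
so by iteration $x(k\tau) = \Delta(\tau)^k x^0$ for all $k\in\mathbb{N}\cup\{0\}$.

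The ($\Rightarrow$) direction is then immediate: if $\|x(t)\|\to 0$ as $t\to\infty$ for every $x^0\in X$, restricting to $t=k\tau$ yields $\|\Delta(\tau)^k x^0\|\to 0$, which is strong stability of $(\Delta(\tau)^k)_{k\in\mathbb{N}}$.

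For ($\Leftarrow$), I would write an arbitrary $t\geq 0$ as $t=k\tau+s$ with $s\in[0,\tau)$ and apply the same mild-solution computation on the subinterval $[k\tau,k\tau+s]$ to obtain
\[
x(t) = T(s)x(k\tau) + S(s)Fx(k\tau) = \Delta(s)\Delta(\tau)^k x^0.
\]
Setting $M:=\sup_{s\in[0,\tau]}\|\Delta(s)\|$, this yields $\|x(t)\|\leq M\,\|\Delta(\tau)^k x^0\|$. Letting $t\to\infty$ forces $k\to\infty$ and the hypothesis gives $\|x(t)\|\to 0$.

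The only point that requires a small amount of care is the finiteness of $M$. This follows from the strong continuity of $(T(t))_{t\geq 0}$, which yields $\sup_{s\in[0,\tau]}\|T(s)\|<\infty$ by the standard exponential bound, together with $\|S(s)\|\leq \|B\|\int_0^{s}\|T(r)\|\,dr$, so $s\mapsto\|\Delta(s)\|$ is bounded on $[0,\tau]$. Apart from this observation, the argument is purely algebraic manipulation of the mild solution, so I do not anticipate a genuine obstacle.
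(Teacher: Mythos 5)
Your proposal is correct and follows essentially the same route as the paper's proof: both reduce to the identity $x(k\tau+s)=\Delta(s)\Delta(\tau)^k x^0$ and bound $\sup_{s\in[0,\tau]}\|\Delta(s)\|$ via the strong continuity of $(T(t))_{t\geq 0}$. The extra details you supply (the change of variables $s=\tau-r$ in the variation-of-parameters formula and the explicit bound on $\|S(s)\|$) are fine and consistent with the paper.
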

\begin{proof}
	Since $(\Rightarrow)$ immediately follows from 
	\eqref{eq:discretized_sys}, 
	we here show only $(\Leftarrow)$.
	Suppose that $(\Delta(\tau)^k)_{k \in \mathbb{N}}$
	is strongly stable. Let $x^0 \in X$ be given. We obtain
	\[
	x(k\tau + t) = \Delta(t) x(k\tau) = 
	\Delta(t) \Delta(\tau)^k x^0 \qquad 
	\forall t\in [0,\tau),~\forall k \in \mathbb{N}\cup \{0\}.
	\]
	By the strong continuity of $(T(t) )_{t\geq 0}$,
	there exists $c\geq 1$ such that 
	\[
	\|\Delta(t)\| \leq c\qquad \forall t\in [0,\tau).
	\]
	It follows that 
	\[
	\|x(k\tau + t)\| \leq c \|\Delta(\tau)^k x^0\| \qquad 
	\forall t\in [0,\tau),~\forall k \in \mathbb{N}\cup \{0\}.
	\]
	By assumption, $\|\Delta(\tau)^k x^0\| \to 0$ as $k \to \infty$. Thus,
	we obtain $x(t) \to 0$ as $t\to \infty$.
\end{proof}

Instead of dealing with strong stability directly,
we employ the following sufficient conditions
obtained in the Arendt-Batty-Lyubich-V\~u theorem~\cite{Arendt1998,Lyubich1988}.

\begin{theorem}[Continuous case]
	\label{thm:ABLV_cont}
	Let $(T(t))_{t\geq 0}$ be a uniformly bounded 
	semigroup generated by $A$ on a Hilbert space.
	If $\sigma_p(A) \cap i\mathbb{R} = \emptyset$ and if $\sigma(A) \cap i\mathbb{R}$ is countable,
	then $(T(t))_{t\geq 0}$ is strongly stable.
\end{theorem}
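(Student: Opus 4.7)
My plan is to follow the classical proof of the Arendt-Batty-Lyubich-V\~u theorem, structured around a splitting/quotient argument. I would first introduce the stable subspace
\[
X_s := \{x \in X : \lim_{t\to\infty}\|T(t)x\|=0\},
\]
which is easily seen to be closed and $T(t)$-invariant (closedness uses uniform boundedness). The goal is to show $X_s = X$; I would argue by contradiction, assuming $X_s$ is a proper subspace and passing to the quotient $\widetilde X := X/X_s$ with the induced semigroup $(\widetilde T(t))_{t\geq 0}$ and generator $\widetilde A$. Uniform boundedness transfers to the quotient.

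The heart of the proof is the spectral analysis of $\widetilde A$. First, containment $\sigma(\widetilde A) \subseteq \sigma(A)$ follows from the standard relation between resolvents of $A$ and its quotient. Second, one must show $\sigma(\widetilde A) \subseteq i\mathbb{R}$; this uses the fact that in $\widetilde X$ no nonzero coset is annihilated asymptotically by $\widetilde T(t)$, which together with the resolvent representation for $\re \lambda > 0$ rules out spectral values off $i\mathbb{R}$. Combined with the hypothesis, $\sigma(\widetilde A)$ is then a countable subset of $i\mathbb{R}$. Third, I would prove $\sigma_p(\widetilde A) \cap i\mathbb{R} = \emptyset$: a putative eigenvector of $\widetilde A$ at $i\beta$ lifts to $x \in X$ with $T(t)x - e^{i\beta t}x \in X_s$ for all $t \geq 0$, and an Abel/ergodic averaging of $e^{-i\beta t}T(t)x$ produces a genuine eigenvector of $A$ at $i\beta$, contradicting $\sigma_p(A) \cap i \mathbb{R} = \emptyset$. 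The Hilbert space structure then lets me apply the same averaging to $\widetilde A^*$ in order to eliminate the residual spectrum on $i\mathbb{R}$.

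Finally, since $\widetilde X \neq \{0\}$, the spectrum $\sigma(\widetilde A)$ is nonempty, so the Cantor-Bendixson theorem provides an isolated point $i\beta_0 \in \sigma(\widetilde A)$. The associated Riesz projection yields a nonzero closed $\widetilde A$-invariant subspace on which $\widetilde A$ restricts to a bounded operator with spectrum $\{i\beta_0\}$; a standard argument using the spectral mapping theorem applied to the resulting uniformly bounded, uniformly continuous semigroup $(e^{i\beta_0 t} \cdot)$ then forces $i\beta_0$ to be an eigenvalue of $\widetilde A$, contradicting the previous step.

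The step I expect to be the main obstacle is the spectral localization of $\widetilde A$, together with the elimination of both point and residual spectrum on $i\mathbb{R}$: passing to the quotient can in principle create spectrum that was not present in $A$, and the Hilbert space inner product is used essentially to tame the residual part via the adjoint $\widetilde A^*$.
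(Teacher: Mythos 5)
The paper does not actually prove this statement: Theorem~\ref{thm:ABLV_cont} is quoted as the known Arendt--Batty--Lyubich--V\~u theorem, with references to the original articles, so there is no internal proof to compare against. Judged on its own terms, your outline follows the classical Lyubich--V\~u route (quotient by the stable subspace, spectral analysis of the induced generator, isolated point of a countable closed set, Riesz projection, Abel averaging to lift eigenvectors, reflexivity to handle the adjoint point spectrum). That is the right skeleton, but there is a genuine gap, and it sits exactly where you flagged the ``main obstacle.''

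If you equip $\widetilde X = X/X_s$ with the quotient norm, the induced semigroup is merely bounded, and neither of the two facts your endgame needs is available. First, $\sigma(\widetilde A)\subseteq i\mathbb{R}$ does not follow from ``no nonzero coset is asymptotically annihilated'' together with the Laplace-transform representation of $R(\lambda,\widetilde A)$ for $\re\lambda>0$: that representation only yields the Hille--Yosida bound $\|R(\lambda,\widetilde A)\|\leq M/\re\lambda$ and cannot exclude spectrum in the open left half-plane. Second, and more fatally, even granting that $\sigma(\widetilde A)\cap i\mathbb{R}$ is nonempty, closed, and countable, the Baire/Cantor--Bendixson argument only produces a point $i\beta_0$ isolated in $\sigma(\widetilde A)\cap i\mathbb{R}$, not isolated in $\sigma(\widetilde A)$; if spectrum with negative real part accumulates at $i\beta_0$, there is no Riesz projection and the final contradiction evaporates. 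The classical repair is to replace the quotient norm by the limit seminorm $\ell(x):=\limsup_{t\to\infty}\|T(t)x\|$ (whose kernel is exactly $X_s$), complete $X/X_s$ in the induced norm, and observe that the extended semigroup on the completion is \emph{isometric}; since its generator's spectrum is contained in $\sigma(A)$ and, by countability, omits points of $i\mathbb{R}$, the isometric semigroup embeds into an isometric group, which forces its spectrum into $i\mathbb{R}$ and makes isolated points of the peripheral spectrum genuinely isolated. With that replacement your remaining steps go through; note also that the invariance of $X_s$ under $R(\lambda,A)$ for $\lambda\in\rho(A)\cap i\mathbb{R}$ requires a connectedness argument (it holds on the component of $\rho(A)$ containing the right half-plane), which is a second, hidden place where countability of $\sigma(A)\cap i\mathbb{R}$ is used.
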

\begin{theorem}[Discrete case]
	\label{thm:ABLV_disc}
	Let $(\Delta^k)_{k \in \mathbb{N}}$ be a power bounded 
	discrete semigroup on a Hilbert space.
	If $\sigma_p(\Delta) \cap \mathbb{T} = \emptyset$ and if $\sigma(\Delta) \cap \mathbb{T}$ is countable,
	then $(\Delta^k)_{k \in \mathbb{N}}$ is strongly stable.
\end{theorem}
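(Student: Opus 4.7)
The statement is the discrete Arendt-Batty-Lyubich-V\~u theorem, a classical result; I would prove it by contradiction, combining a reduction to the quotient by the stable subspace, a Baire-category localisation at an isolated unimodular spectral point, and a spectral/duality argument that exploits the Hilbert structure.

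First, introduce the closed $\Delta$-invariant \emph{stable subspace}
\[
Y := \{x \in X : \|\Delta^k x\| \to 0 \text{ as } k \to \infty\},
\]
and suppose for contradiction that $Y \neq X$. The induced operator $\tilde\Delta$ on the Hilbert-space quotient $X/Y$ is again power bounded and satisfies $\sigma(\tilde\Delta) \subseteq \sigma(\Delta)$. A short check shows $\sigma(\tilde\Delta)\cap\mathbb{T}\neq\emptyset$: otherwise power boundedness would give spectral radius of $\tilde\Delta$ strictly less than $1$, forcing $\tilde\Delta^k\to 0$ in norm and $Y = X$. Since $\sigma(\tilde\Delta)\cap\mathbb{T}$ is a non-empty closed subset of the countable set $\sigma(\Delta)\cap\mathbb{T}$, the Baire category theorem supplies an isolated point $\lambda\in\mathbb{T}$ of $\sigma(\tilde\Delta)$, with associated non-zero Riesz projection $P$.

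The goal is now to extract a non-zero eigenvector of $\tilde\Delta$ at $\lambda$ from the spectral subspace $\ran P$. Any such eigenvector $\tilde z$ lifts to $X$ via the Ces\`aro limit
\[
z := \lim_{n\to\infty}\frac{1}{n}\sum_{k=0}^{n-1}\lambda^{-k}\Delta^k x,
\]
which exists by the Hilbert-space mean ergodic theorem for power-bounded operators and satisfies $\Delta z = \lambda z$, with $z \notin Y$ because its image in $X/Y$ equals $\tilde z \neq 0$. This produces $\lambda \in \sigma_p(\Delta)\cap\mathbb{T}$, contradicting the hypothesis.

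\textbf{Main obstacle.} The delicate step is producing the eigenvector on $\ran P$. Power boundedness of $\tilde\Delta|_{\ran P}$ together with $\sigma(\tilde\Delta|_{\ran P}) = \{\lambda\}$ is not enough by itself: on Hilbert space, certain Sz.-Nagy-Foias model operators associated with singular inner functions are power-bounded (even contractive) with singleton unimodular spectrum and no point spectrum at all. The classical resolution uses duality to find an eigenvector of $\tilde\Delta^*$ at $\bar\lambda$ via a Hahn--Banach / weak-$*$ compactness argument on the dual spectral subspace, and then transfers back through the Hilbert-space pairing; the countability of $\sigma(\Delta)\cap\mathbb{T}$ enters via a transfinite induction along the Cantor--Bendixson derivative, ensuring that no non-zero orbit can remain bounded away from zero while avoiding every unimodular eigenvalue. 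Making this dual-eigenvector step rigorous is the technical heart of the theorem and the reason the result does not follow immediately from Theorem~\ref{thm:ABLV_cont}.
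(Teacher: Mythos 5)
The paper offers no proof of this theorem: it is imported verbatim from the literature \cite{Arendt1998,Lyubich1988}, so there is no in-paper argument to compare against and your proposal has to stand on its own. Its overall strategy (quotient by the stable subspace $Y$, locate an isolated unimodular spectral point by Baire category, extract a unimodular eigenvector, contradict $\sigma_p(\Delta)\cap\mathbb{T}=\emptyset$) is the right one, and the Ces\`aro lifting of an eigenvector of $\tilde\Delta$ back to $X$ is correct. But two steps are genuinely broken or missing.

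First, the Riesz projection $P$ need not exist. Baire's theorem gives a point $\lambda$ isolated in the countable closed set $\sigma(\tilde\Delta)\cap\mathbb{T}$ \emph{relative to} $\mathbb{T}$; since $\sigma(\tilde\Delta)$ lies in the closed unit disc, spectrum may still accumulate at $\lambda$ from inside the disc, so $\lambda$ need not be isolated in $\sigma(\tilde\Delta)$. This is precisely why the classical Lyubich--V\~u argument does not use the Hilbert-space quotient $X/Y$ but the completion $Z$ of $X/Y$ under the seminorm $\ell(x)=\limsup_k\|\Delta^kx\|$: the induced operator $S$ on $Z$ is an isometry with $\sigma(S)\subseteq\sigma(\Delta)$, and since $\sigma(\Delta)\cap\mathbb{T}$ is countable one has $\sigma(S)\neq\overline{\mathbb{D}_1}$, hence $S$ is an invertible isometry with $\sigma(S)\subseteq\mathbb{T}$, and there an isolated point of $\sigma(S)\cap\mathbb{T}$ really is isolated in $\sigma(S)$. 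Second, the step you yourself call ``the technical heart''---producing an eigenvector from $\ran P$---is not carried out, so the proposal remains a plan rather than a proof; your correct observation that power boundedness plus singleton unimodular spectrum does not by itself force an eigenvector (the singular-inner-function model operators) shows exactly why this cannot be waved through. The standard resolution is also simpler than the duality-plus-transfinite-induction scheme you sketch: on the isometric quotient, $S|_{\ran P}$ is an invertible isometry with $\sigma(S|_{\ran P})=\{\lambda\}\subseteq\mathbb{T}$, hence doubly power bounded, so Gelfand's theorem gives $S|_{\ran P}=\lambda I$; a nonzero eigenvector of $S$ yields, through the canonical map $X\to Z$ and its adjoint, an eigenvector of $\Delta^*$ at $\overline{\lambda}$, and the mean ergodic theorem on the reflexive space $X$ converts this into $\lambda\in\sigma_p(\Delta)\cap\mathbb{T}$. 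A single isolated point therefore suffices and no Cantor--Bendixson induction is needed. Until the isometric-quotient (or an equivalent) construction and the Gelfand step are supplied, the proof is incomplete.
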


\subsection{Basic facts on Riesz-spectral operators}
In the sampled-data system \eqref{eq:sampled_data_sys}, we assume that 
$A$ is a Riesz-spectral operator,
which is defined as follows.
\begin{definition}[Definition 3.2.6 of \cite{Curtain2020}]
	\label{def:RSO}
	{\em
	Let $A$ be a closed linear operator on a Hilbert space $X$ with
	simple eigenvalues $\{\lambda_n:n \in \mathbb{N}\}$ and
	corresponding eigenvectors $\{\phi_n:n \in \mathbb{N}\}$. 
	We say that $A$ is a {\em Riesz-spectral operator} if
	the following two conditions are satisfied:
	\begin{enumerate}
		\item $\{\phi_n:n \in \mathbb{N}\}$ is a {\em Riesz basis} for $X$, that is,
		\begin{enumerate}
			\item 
			the closed linear span of $\{\phi_n:n \in \mathbb{N}\} $ is $X$; and
			\item there exist constants $M_{\rm a}, M_{\rm b} >0$ such that
			for all $N \in \mathbb{N}$ and all $a_n \in \mathbb{C}$, 
			$1 \leq n \leq N$,
			\begin{equation}
			\label{eq:RB_constant}
			M_{\rm a} \sum_{n=1}^N |a_n|^2 \leq 
			\left\|
			\sum_{n=1}^N a_n \phi_n
			\right\|^2 \leq 
			M_{\rm b}\sum_{n=1}^N |a_n|^2;
			\end{equation}
		\end{enumerate}
		\item 
		the set of eigenvalues
		$\{\lambda_n:n \in \mathbb{N}\}$ has
		at most finitely many accumulation points.
	\end{enumerate} 
}
\end{definition}

Before stating  the main result, we recall
some basic facts on Riesz bases and Riesz-spectral operators;
see \cite{Curtain2020, Tucsnak2009, Guo2019book} for more details. 
\begin{lemma}[Lemma~3.2.4 of \cite{Curtain2020}]
	\label{lem:RB_prop}
	Let  $\{\phi_n:n \in \mathbb{N}\}$ form a Riesz basis on a Hilbert 
	space $X$. Then the following hold: {\em
	\begin{enumerate}
		\item {\em 
			There exists a unique biorthogonal sequence 
			$\{\psi_n:n \in \mathbb{N}\}$, and
			$\{\psi_n:n \in \mathbb{N}\}$ is also a Riesz basis for $X$.
		}
		\item 
		{\em
		Every $x \in X$ can be represented uniquely by
		\begin{equation}
		\label{eq:x_representation}
		x = \sum_{n=1}^\infty \langle x, \psi_n \rangle \phi_n.
		\end{equation}
		Moreover, using constants $M_{\rm a}, M_{\rm b} >0$ satisfying
		\eqref{eq:RB_constant}, one has
		\[
		M_{\rm a} \sum_{n=1}^\infty |\langle x, \psi_n \rangle |^2 \leq \|x\|^2 \leq 
		M_{\rm b} \sum_{n=1}^\infty |\langle x, \psi_n \rangle |^2\qquad \forall x \in X.
		\]
	}
	\end{enumerate}
}
\end{lemma}

\begin{lemma}[Lemma~3.2.5 of \cite{Curtain2020}]
	Suppose that a closed linear 
	operator $A$ on a Hilbert space $X$
	has simple eigenvalues $\{\lambda_n:n \in \mathbb{N} \}$
	and that their corresponding eigenvectors $\{\phi_n:n \in \mathbb{N}\}$
	form a Riesz basis in $X$. If
	$\{\psi_n:n \in \mathbb{N}\}$ are the eigenvectors 
	of the adjoint $A^*$ of $A$ corresponding to the eigenvalues $\{ \overline{\lambda_n }:n \in \mathbb{N} \}$,
			then $\{\psi_n:n \in \mathbb{N}\}$ can be suitably scaled so that 
			$\{\phi_n:n \in \mathbb{N}\}$ and  
			$\{\psi_n:n \in \mathbb{N}\}$  are biorthogonal.
\end{lemma}

\begin{theorem}[Theorem~3.2.8 of \cite{Curtain2020}]
	\label{thm:RS_C0}
	Suppose that $A$ is a Riesz-spectral operator on a Hilbert space $X$
	with simple
	eigenvalues $\{\lambda_n:n \in \mathbb{N}\}$ and corresponding
	eigenvectors $\{\phi_n :n \in \mathbb{N}\}$.
	Let $\{\psi_n:n \in \mathbb{N}\}$ be the eigenvectors of $A^*$ such that 
	$\{\phi_n :n \in \mathbb{N}\}$ and $\{\psi_n:n \in \mathbb{N}\}$ are biorthogonal. Then
	$A$ has the following properties: {\em 
	\begin{enumerate}
		\item {\em $A$ satisfies 
		$\rho(A) = \{\lambda \in \mathbb{C}: 
		\inf_{n \in \mathbb{N}}|\lambda - \lambda_n| >0 \}$, 
		$\sigma(A) = \overline{\{\lambda_n:n \in \mathbb{N} \}} $, and
		\[
		(\lambda I - A)^{-1}x = \sum_{n=1}^{\infty} 
		\frac{1}{\lambda - \lambda_n} \langle
		x, \psi_n
		\rangle \phi_n\qquad \forall x\in X,~\forall \lambda \in \rho(A).
		\]}
		\item {\em $A$ has the representation 
		\[
		Ax = \sum^{\infty}_{n=1} \lambda_n 
		\langle x, \psi_n \rangle \phi_n \qquad \forall x \in D(A),
		\]
		and $D(A)$ can be written as
		\[
		D(A) = \left\{
		x \in X: \sum_{n=1}^{\infty} |\lambda_n|^2 \cdot |\langle 
		x, \psi_n \rangle
		|^2 < \infty
		\right\}.
		\]
	}
		\item {\em $A$ is the generator of 
		a strongly continuous semigroup $(T(t))_{t\geq 0}$ 
		if and only if $\sup_{n \in \mathbb{N}} \re \lambda_n < \infty$.
		The semigroup $(T(t) )_{t \geq 0}$ satisfies
		\begin{equation}
		\label{eq:T_rep}
		T(t)x = \sum_{n=1}^\infty e^{t \lambda_n} \langle
		x, \psi_n
		\rangle \phi_n\qquad \forall x \in X,~\forall t \geq 0,
		\end{equation}
		and the growth bound of 
		$(T(t))_{t\geq 0}$  is given by
		$\sup_{n\in \mathbb{N}} \re \lambda_n$. }
	\end{enumerate}
}
\end{theorem}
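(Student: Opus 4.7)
The plan is to exploit the Riesz-basis decomposition from Lemma~\ref{lem:RB_prop} so that $A$ becomes effectively diagonal in the coordinates $\{\langle\cdot,\psi_n\rangle\}_{n \in \mathbb{N}}$; all three items then reduce to coordinate-wise scalar calculations, transferred back to $X$ through the Riesz bounds \eqref{eq:RB_constant}. I would prove item (b) first, and then use the resulting representation to derive (a) and (c) in turn.

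For (b), I would define a candidate operator $\tilde A x := \sum_{n=1}^\infty \lambda_n \langle x,\psi_n\rangle \phi_n$ on the candidate domain $\tilde D := \{x \in X : \sum_n |\lambda_n|^2\,|\langle x,\psi_n\rangle|^2 < \infty\}$. Applying \eqref{eq:RB_constant} to partial sums shows that the series defining $\tilde A x$ converges in $X$ precisely when $x \in \tilde D$, and that $\tilde A$ is closed with $\tilde A\phi_n = \lambda_n \phi_n$. Since $A$ is closed by hypothesis and agrees with $\tilde A$ on $\mathspan\{\phi_n\}$, which is dense in $X$ by the Riesz-basis property, the identification $A = \tilde A$ and $D(A) = \tilde D$ should follow once one verifies that the common restriction to $\mathspan\{\phi_n\}$ admits a unique closed extension in this spectral setting.

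For (a), solving the resolvent equation coordinate-wise gives the candidate $R(\lambda)x = \sum_n (\lambda-\lambda_n)^{-1} \langle x,\psi_n\rangle \phi_n$. The upper Riesz bound shows that $R(\lambda) \in \mathcal{L}(X)$ whenever $\sup_n |\lambda - \lambda_n|^{-1} < \infty$, and a direct computation against the representation of $A$ from (b) yields $(\lambda I - A)R(\lambda) = R(\lambda)(\lambda I - A) = I$ on the appropriate domain. Conversely, the estimate $\|R(\lambda,A)\phi_n\|/\|\phi_n\| = |\lambda-\lambda_n|^{-1}$ forces $\inf_n|\lambda-\lambda_n|>0$ whenever $\lambda \in \rho(A)$. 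The characterization $\sigma(A) = \overline{\{\lambda_n\}}$ follows, and the total-disconnectedness hypothesis enters in connecting $\sigma(A)$ with the workable description $\rho(A) = \{\lambda : \inf_n|\lambda-\lambda_n|>0\}$ and in guaranteeing that each $\lambda_n$ sits as an isolated point of its closure for the spectral-projection arguments that typically accompany the theorem.

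For (c), define $\tilde T(t)x := \sum_n e^{t\lambda_n}\langle x,\psi_n\rangle \phi_n$. The Riesz inequalities give $\|\tilde T(t)x\|^2 \leq (M_{\rm b}/M_{\rm a})\,e^{2t\sup_n\re\lambda_n}\|x\|^2$, so boundedness of $\tilde T(t)$ for every $t\geq 0$ is equivalent to $\sup_n\re\lambda_n<\infty$; the semigroup law then follows termwise from $e^{(t+s)\lambda_n}=e^{t\lambda_n}e^{s\lambda_n}$. The step I expect to be the main obstacle is strong continuity at $t=0$: one must split the series at some index $N$, control the tail uniformly in $t$ on a small interval via \eqref{eq:RB_constant}, and handle the finite head by continuity of each exponential, with care needed because the tail estimate is precisely what fails if $\sup_n\re\lambda_n = \infty$. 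Once strong continuity is in hand, term-by-term differentiation on $D(A)$ identifies the generator with $A$ via (b), and the growth-bound equality follows from the upper estimate combined with the lower bound $\|\tilde T(t)\phi_n\|/\|\phi_n\| = e^{t\re\lambda_n}$.
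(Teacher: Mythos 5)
The paper does not actually prove this statement: it is imported verbatim as Theorem~2.3.5 of \cite{Curtain1995} and used as a known fact, so there is no in-paper argument to compare against. Judged on its own, your outline follows the standard textbook route (diagonalize in the biorthogonal coordinates, then read off the resolvent, the domain, and the semigroup), and parts (a) and (c) are essentially sound, including your correct identification of strong continuity at $t=0$ as the delicate step in (c).

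There is, however, a genuine gap in part (b), and it sits at the crux of the whole theorem. You propose to conclude $A=\tilde A$ from the facts that both operators are closed and agree on the dense subspace $\mathspan\{\phi_n: n\in\mathbb{N}\}$, ``once one verifies that the common restriction admits a unique closed extension.'' That verification cannot be supplied: a densely defined operator generally has many closed extensions (any symmetric operator with several self-adjoint extensions yields distinct closed operators agreeing on a common dense core), and nothing in Definition~\ref{def:RSO} says that $A$ is the closure of its restriction to $\mathspan\{\phi_n\}$. The ingredient that actually closes this gap --- and which is absent from your sketch --- is the biorthogonality with the adjoint eigenvectors: for every $x\in D(A)$ one has $\langle Ax,\psi_n\rangle=\langle x,A^*\psi_n\rangle=\langle x,\overline{\lambda_n}\psi_n\rangle=\lambda_n\langle x,\psi_n\rangle$, so expanding $Ax$ in the Riesz basis via Lemma~\ref{lem:RB_prop}~b) gives $Ax=\sum_{n}\lambda_n\langle x,\psi_n\rangle\phi_n$ directly for all $x\in D(A)$, i.e.\ $A\subseteq\tilde A$. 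The reverse inclusion $\tilde D\subseteq D(A)$ then follows by applying the closedness of $A$ to the partial sums of the candidate resolvent $T_\lambda x=\sum_n(\lambda-\lambda_n)^{-1}\langle x,\psi_n\rangle\phi_n$ for a fixed $\lambda$ with $\inf_n|\lambda-\lambda_n|>0$ (such $\lambda$ exist since the totally disconnected set $\overline{\{\lambda_n\}}$ is not all of $\mathbb{C}$), noting that $\ran T_\lambda=\tilde D$. Be aware that the same adjoint computation is also what justifies the left-inverse identity $T_\lambda(\lambda I-A)x=x$ for $x\in D(A)$ in your part (a), so the gap propagates there as well; with this ingredient added, the rest of your outline goes through.
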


\subsection{Main result}
We place the following assumption on the sampled-data system.
\begin{assumption}
	\label{assum:for_MR}
	{\em 
	Let $A$ be a Riesz-spectral operator on a Hilbert space $X$ with
	simple eigenvalues $\{\lambda_n:n \in \mathbb{N}\}$ and
	corresponding eigenvectors $\{\phi_n:n \in \mathbb{N}\}$. 
	Let 
	$\{\psi_n: n \in \mathbb{N}\}$ be the eigenvectors of $A^*$  that 
	are biorthogonal with $\{\phi_n:n \in \mathbb{N}\}$.
	Let
	the control operator $B \in \mathcal{L}(\mathbb{C},X)$ 
	and the feedback operator $F\in \mathcal{L}(X,\mathbb{C})$ be
	represented as
	\begin{align}
	\label{eq:B_F_rep}
	Bu = bu,\quad u \in \mathbb{C};\qquad 
	Fx = \langle x, f\rangle,\quad x\in X
	\end{align}
	for some $b,f \in X$. Assume that 
	the operators $A$, $B$, and $F$ satisfy the following conditions:
	\begin{enumerate}
		\def\theenumi{\arabic{enumi}}
		\def\labelenumi{\theenumi}
		\renewcommand{\labelenumi}{(A\arabic{enumi})}
		\item
		\label{assump:finite_unstable}
		there exist $\alpha >0$ and $0 < \delta \leq \pi/2$ such that the set
		$\mathbb{C}_{-\alpha} \cap \Sigma_{\pi/2+\delta}$  has
		only finite elements of $\{\lambda_n:n \in \mathbb{N} \}$ (see also Fig.~\ref{fig:C_Sigma} for the set 
		$\mathbb{C}_{-\alpha} \cap \Sigma_{\pi/2+\delta}$);
		\item 
		\label{assump:imaginary} 
		$\{\lambda_n :n \in \mathbb{N}\} \cap i \mathbb{R} = \emptyset$;
		\vspace{3pt}
		\item
		\label{assump:origin}
		$0 \in \overline{\{\lambda_n:n \in \mathbb{N} \}}$;
		\item
		\label{assump:closed_loop}
		$A+BF$ generates a uniformly bounded semigroup on $X$
		and satisfies
		$\sigma_p(A+BF) \cap i \mathbb{R} = \emptyset$, 
		$\sigma(A+BF) \cap i \mathbb{R} = \{0\}$, and
		\begin{equation}
		\label{eq:imaginary_axis_RS_case}
		\sup_{\substack{\omega \in \mathbb{R}\\ |\omega| > 1}}
		\| R(i\omega, A+BF) \| < \infty;
		\end{equation}
		\item
		\label{assump:b_in_invA}
		$b$ satisfies 
		\[
		\sum_{n=1}^{\infty} 
		\left|\frac{\langle b, \psi_n \rangle}{\lambda_n} \right|^2 < \infty;
		\]
		\item
		\label{assump:b_F_cond}
		$b$ and $f$ satisfy 
		\[
		\sum^{\infty}_{n=1}
		\frac{\langle b, \psi_n \rangle  \langle \phi_n, f  \rangle}{ \lambda_n}  \not= -1.
		\]
	\end{enumerate}
}
\end{assumption}

\def\theenumi{\alph{enumi})}
\def\labelenumi{\theenumi}
\def\theenumii{(\roman{enumii})}
\def\labelenumii{\theenumii}

By (A\ref{assump:finite_unstable}),
$\sup_{n \in \mathbb{N}} \re \lambda_n < \infty$. Therefore, 
Theorem~\ref{thm:RS_C0}~c) shows that $A$ generates
a strongly continuous semigroup.
Since $\sigma(A) = \overline{\{\lambda_n:n \in \mathbb{N} \}} $ by
Theorem~\ref{thm:RS_C0}~a),
it follows from  (A\ref{assump:imaginary}) and (A\ref{assump:origin}) that 
$0 \in \sigma(A) \setminus \sigma_p(A)$.
Applying the mean ergodic theorem (see, e.g., Theorem~I.2.25 of \cite{Eisner2010}) to
the stable part of $A$, we find that $0$ belongs to the continuous spectrum of $A$;
see Remark \ref{rem:continuous_spectrum}  for details.
Note that the control system $(A,B,-)$ is not exponentially stabilizable by 
Theorem~8.2.3 of \cite{Curtain2020}.
By (A\ref{assump:closed_loop}) and
the Arendt-Batty-Lyubich-V\~u theorem,
the semigroup $( T_{BF}(t))_{t\geq 0}$ generated by $A+BF$ is
strongly stable.
Using the mean ergodic theorem again,
we see that  $0$ is still in the continuous spectrum of $A+BF$.

The assumption \eqref{eq:imaginary_axis_RS_case}
will be used to guarantee that 
$|1-FR(\lambda, A)B|$ is bounded from below by a positive constant
on $\overline{\mathbb{C}_0} \setminus \mathbb{D}_\eta$
for every $\eta>0$. 
This assumption \eqref{eq:imaginary_axis_RS_case}
appears also in the robustness analysis of strong stability developed 
in \cite{Paunonen2014JDE},
and
the assumption in the form
\[
\sup_{0 < |\omega| \leq 1} |\omega| \cdot
\| R(i\omega, A+BF) \| < \infty
\] 
is additionally placed in \cite{Paunonen2014JDE}. Instead of this assumption, we place
(A\ref{assump:b_in_invA}) and
(A\ref{assump:b_F_cond}) in order to obtain a positive lower bound of
$|1-FR(\lambda, A)B|$
on $(\overline{\mathbb{C}_0} \cap \mathbb{D}_\eta) \setminus \{0\}$
for a sufficiently small $\eta >0$.
The sectorial condition on the eigenvalues in (A\ref{assump:finite_unstable})
is also used for this purpose. 
Similarly, when the robustness of 
exponential stability with respect to sampling is analyzed 
for systems with unbounded control operators in \cite{Logemann2003},
the semigroup $(T(t))_{t \geq 0}$
is assumed to be holomorphic, which
guarantees that $\sigma(A)$ is contained in a certain sector.

We easily see that $b$ belongs to the domain of the 
algebraic inverse of $A$ under (A\ref{assump:b_in_invA}).
To deal with the high sensitivity of strong stability to perturbations,
we assume by (A\ref{assump:b_in_invA}) that the control operator $B$ has
the boundedness property related to the continuous spectrum of $A$
in addition to the standard boundedness property $B \in \mathcal{L}(\mathbb{C},X)$.
It is assumed also in \cite{Paunonen2014JDE}
 that 
perturbations have
such stronger boundedness properties.

The following theorem, which presents
robustness of strong stability with respect to sampling,
is the main result of this paper.
\begin{theorem}
	\label{thm:SD_SS}
	If Assumption~\ref{assum:for_MR} is satisfied, then
	there exists $\tau^*>0$ such that for every $\tau \in (0,\tau^*)$,
	the sampled-data system \eqref{eq:sampled_data_sys} is strongly stable. 
\end{theorem}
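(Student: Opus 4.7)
The plan is to pass from continuous to discrete time via Proposition~\ref{prop:SD_DT}, reducing strong stability of the sampled-data system to strong stability of the discrete semigroup $(\Delta(\tau)^k)_{k\in\mathbb{N}}$, and then to verify the hypotheses of the discrete Arendt--Batty--Lyubich--V\~u theorem (Theorem~\ref{thm:ABLV_disc}) for all sufficiently small $\tau>0$. Concretely I aim to establish that $\Delta(\tau)$ is power bounded, that $\sigma_p(\Delta(\tau))\cap\mathbb{T} = \emptyset$, and in fact that $\sigma(\Delta(\tau))\cap\mathbb{T} = \{1\}$, so the countability requirement of Theorem~\ref{thm:ABLV_disc} is met trivially.

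For the spectral analysis I exploit the rank-one structure of $S(\tau)F$. From the factorisation $\lambda I - \Delta(\tau) = (\lambda I - T(\tau))(I - R(\lambda,T(\tau))S(\tau)F)$ on $\rho(T(\tau))$, the relevant scalar characteristic function is
\[
g_\tau(\lambda) := 1 - FR(\lambda,T(\tau))S(\tau) = 1 - \sum_{n=1}^{\infty} \frac{e^{\tau\lambda_n}-1}{\lambda_n\,(\lambda - e^{\tau\lambda_n})}\,\langle b,\psi_n\rangle\langle \phi_n,f\rangle,
\]
whose Riesz-basis expansion follows from Lemma~\ref{lem:RB_prop} and Theorem~\ref{thm:RS_C0} with convergence supplied by (A\ref{assump:b_in_invA}). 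The sectorial condition (A\ref{assump:finite_unstable}) together with (A\ref{assump:imaginary})--(A\ref{assump:origin}) forces the only possible accumulation of $\{e^{\tau\lambda_n}\}$ on $\mathbb{T}$ to be at $1$; for small $\tau$ this yields $\sigma(T(\tau))\cap\mathbb{T}\subset\{1\}$. Through the substitution $\lambda = e^{i\tau\omega}$, $g_\tau$ becomes a discrete analogue of $1 - FR(i\omega,A)B$, and (A\ref{assump:closed_loop})--(A\ref{assump:b_F_cond}) produce a uniform positive lower bound on this continuous quantity. Transferring the bound to $g_\tau$ uniformly in $\tau$ on $\mathbb{T}\setminus\{\lambda : |\lambda - 1|<r\}$ uses (A\ref{assump:b_in_invA}) to control tails and (A\ref{assump:finite_unstable}) to suppress aliasing from high-frequency modes, while the neighbourhood of $\lambda=1$ is settled by the $\tau\downarrow 0$ limit of $g_\tau(1)$ combined with (A\ref{assump:b_F_cond}). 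The inclusion $1\in\sigma(\Delta(\tau))\setminus\sigma_p(\Delta(\tau))$ is then obtained via the mean ergodic theorem applied to $\Delta(\tau)$, mirroring the continuous-time argument for $A+BF$.

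Power boundedness of $(\Delta(\tau)^k)$ is the principal obstacle. I would decompose $\Delta(\tau)$ along the spectral projection associated with the finitely many eigenvalues of $A$ in $\mathbb{C}_{-\alpha}\cap\Sigma_{\pi/2+\delta}$: on the finite-dimensional invariant piece, the stabilising action of $BF$ combined with a standard finite-dimensional sampled-data perturbation argument provides uniform power bounds for small $\tau$, while on the complementary, essentially left-sectorial piece the uniform resolvent estimate from the spectral step feeds into a Hilbert-space Gearhart-type characterisation of discrete power boundedness. The delicate point is that the constants must remain bounded as $\tau\downarrow 0$; this is precisely where (A\ref{assump:b_in_invA}) prevents the rank-one correction $S(\tau)F$ from producing a non-uniform blow-up and where (A\ref{assump:b_F_cond}) keeps $g_\tau$ bounded away from zero in a neighbourhood of $\lambda = 1$. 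Once these three properties hold uniformly in small $\tau$, Theorem~\ref{thm:ABLV_disc} delivers strong stability of $(\Delta(\tau)^k)_{k\in\mathbb{N}}$, and Proposition~\ref{prop:SD_DT} concludes the proof.
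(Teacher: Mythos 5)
Your overall architecture --- reduce to the discrete semigroup via Proposition~\ref{prop:SD_DT}, verify power boundedness together with $\sigma_p(\Delta(\tau))\cap\mathbb{T}=\emptyset$ and $\sigma(\Delta(\tau))\cap\mathbb{T}=\{1\}$, and invoke Theorem~\ref{thm:ABLV_disc} --- is exactly the paper's, and your treatment of the spectral part (the rank-one factorisation, the scalar characteristic function $g_\tau$, and the transfer of the lower bound for $|1-FR(\lambda,A)B|$ on $\overline{\mathbb{C}_0}$ to the discrete function on $\rho(T(\tau))\cap\overline{\mathbb{E}_1}$, with tails controlled by (A\ref{assump:b_in_invA}) and the sector condition) matches Lemmas~\ref{lem:continuous_time_trans_func_bound} and \ref{lem:discrete_time_trans_func_bound} in outline. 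One detail there is backwards: you cannot obtain $1\in\sigma(\Delta(\tau))\setminus\sigma_p(\Delta(\tau))$ from the mean ergodic theorem, since that theorem presupposes power boundedness (which you establish only later) and in any case yields only $X=\ker(\Delta(\tau)-I)\oplus\overline{\ran(\Delta(\tau)-I)}$, not injectivity of $I-\Delta(\tau)$. The paper proves this for every $\tau>0$ directly: $(I-T(\tau))^{-1}S(\tau)=-A^{-1}B$ exactly, so $I-\Delta(\tau)=(I-T(\tau))\bigl(I+A^{-1}BF\bigr)$, where (A\ref{assump:b_F_cond}) makes the second factor boundedly invertible and the spectral mapping theorem for the point spectrum makes the first factor injective but not surjective (Lemma~\ref{lem:spectal_cond}).

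The genuine gap is in the power-boundedness step. First, $\Delta(\tau)=T(\tau)+S(\tau)F$ is \emph{not} reduced by the spectral projection $\Pi$ of $A$: the rank-one term $S(\tau)F$ couples $X^+$ and $X^-$, so you cannot split $\Delta(\tau)$ into two invariant pieces and treat them separately; power bounds on the diagonal blocks of a non-triangular $2\times 2$ block operator do not yield a power bound for the whole operator. Second, a ``Gearhart-type'' uniform resolvent bound on $\mathbb{E}_1$ (a Kreiss-type condition) does \emph{not} characterise power boundedness in infinite dimensions --- the Kreiss matrix theorem fails there, and such a bound only gives $\|\Delta(\tau)^k\|=O(k)$. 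What is needed is the strictly stronger quadratic resolvent criterion of Theorem~\ref{thm:power_bounded}, applied to both $R(z,\Delta(\tau))$ and its adjoint. Verifying it requires (i) the Sherman--Morrison--Woodbury formula to write $R(z,\Delta(\tau))$ as $R(z,T(\tau))$ plus a rank-one correction whose scalar denominator $1-FR(z,T(\tau))S(\tau)$ is bounded away from zero by Lemma~\ref{lem:discrete_time_trans_func_bound}; (ii) square-integral estimates for $R(z,T(\tau))x$ and $R(z,T(\tau))^*y$ obtained from the Riesz-basis diagonalisation (this is where the spectral decomposition of the \emph{open-loop} $T(\tau)$ legitimately enters, since $T(\tau)$ does leave $X^\pm$ invariant); and (iii) a separate argument (Lemma~\ref{lem:resol_T_SF}, using controllability and observability of the sampled unstable part together with a non-resonance condition on $\tau$) establishing the prerequisite $\mathbb{E}_1\subset\rho(\Delta(\tau))$, which your proposal does not address at all. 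Without these ingredients the power-boundedness claim, and hence the appeal to Theorem~\ref{thm:ABLV_disc}, does not go through.
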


The idea of the proof 
is to show that  the discrete semigroup $(\Delta(\tau)^k)_{k \in \mathbb{N}}$
satisfies the sufficient condition for strong stability 
in the Arendt-Batty-Lyubich-V\~u theorem.
The sufficient condition given in this theorem consists of 
the spectral property and the boundedness property
of a discrete semigroup.
These properties of $(\Delta(\tau)^k)_{k \in \mathbb{N}}$ are investigated 
in Sections~\ref{sec:spectrum}  and \ref{sec:power_boundedness}, respectively.

\section{Spectrum and sampling}
\label{sec:spectrum}
Our first goal is 
to show that the  spectral properties in
the Arendt-Batty-Lyubich-V\~u theorem is satisfied 
for $\Delta(\tau)$ with sufficiently small $\tau>0$.
\begin{theorem}
	\label{thm:T_SF_resolvent_set}
	If Assumption~\ref{assum:for_MR} is satisfied, then
	there exists $\tau^*>0$ such that 
	$\sigma_p(\Delta(\tau)) \cap
	\mathbb{T}  = \emptyset$ and
	$
	\sigma(\Delta(\tau)) \cap
	\mathbb{T}  = \{1\} 
	$ for every $\tau \in (0,\tau^*)$.
\end{theorem}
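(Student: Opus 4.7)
The plan is to exploit the fact that $S(\tau)F$ is rank-one, so $\Delta(\tau)$ is a rank-one perturbation of $T(\tau)$, and then analyse the unit circle piecewise. First, since $T(\tau)$ has a diagonal representation in the Riesz basis $\{\phi_n\}$ with eigenvalues $\{e^{\tau\lambda_n}\}$ and the same biorthogonal sequence $\{\psi_n\}$, an argument paralleling Theorem~\ref{thm:RS_C0}~a) yields $\sigma(T(\tau))=\overline{\{e^{\tau\lambda_n}:n\in\mathbb{N}\}}$. Combining (A\ref{assump:finite_unstable})--(A\ref{assump:origin}), the cofinite part of $\{\lambda_n\}$ satisfies $|\arg\lambda_n|\in[\pi/2+\delta,\pi]$, whence $\re\lambda_n\le-|\lambda_n|\sin\delta$, so $|e^{\tau\lambda_n}|\to 1$ forces $\lambda_n\to 0$; together with (A\ref{assump:imaginary}), no $e^{\tau\lambda_n}$ itself lies on $\mathbb{T}$ for small $\tau$ and the only accumulation point of $\{e^{\tau\lambda_n}\}$ on $\mathbb{T}$ is $1$. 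Thus $\sigma(T(\tau))\cap\mathbb{T}=\{1\}$.

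For $\mu\in\rho(T(\tau))$ I would next factor
\[
\mu I-\Delta(\tau)=(\mu I-T(\tau))\bigl(I-R(\mu,T(\tau))S(\tau)F\bigr),
\]
reducing the question to invertibility of the rank-one operator $R(\mu,T(\tau))S(\tau)F$, whose unique non-trivial spectral value is the scalar
\[
g(\mu,\tau):=FR(\mu,T(\tau))S(\tau)=\sum_{n=1}^{\infty}\frac{(e^{\tau\lambda_n}-1)\langle b,\psi_n\rangle\langle\phi_n,f\rangle}{\lambda_n(\mu-e^{\tau\lambda_n})},
\]
with absolute convergence supplied by (A\ref{assump:b_in_invA}) and Cauchy--Schwarz applied to the dual-basis coefficients of $f$. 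Hence $\mu\in\sigma(\Delta(\tau))\cap\rho(T(\tau))$ if and only if $g(\mu,\tau)=1$, and any such $\mu$ is an eigenvalue of $\Delta(\tau)$.

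The main obstacle is then to show $g(\mu,\tau)\ne 1$ for every $\mu\in\mathbb{T}\setminus\{1\}$ and every sufficiently small $\tau$. Writing $\mu=e^{i\tau\omega}$ one has formally $g(e^{i\tau\omega},\tau)\to FR(i\omega,A)B$ as $\tau\to 0$ for fixed $\omega\ne 0$, and the identity $1-FR(i\omega,A)B\ne 0$ (equivalent to $i\omega\in\rho(A+BF)$) follows from (A\ref{assump:closed_loop}). To upgrade this pointwise fact to a uniform estimate I would split $\mathbb{T}\setminus\{1\}$ into three regimes: a neighbourhood of $1$ (small $\omega$), controlled by a Taylor expansion in $\tau$ together with (A\ref{assump:b_F_cond}), which yields $g(1,\tau)\to-\sum_n \langle b,\psi_n\rangle\langle\phi_n,f\rangle/\lambda_n\ne 1$; a compact range of $\omega$ bounded away from $0$, handled by continuity and (A\ref{assump:closed_loop}); and the high-frequency regime, where the uniform resolvent bound \eqref{eq:imaginary_axis_RS_case} on $R(i\omega,A+BF)$, the Sherman--Morrison identity $FR(i\omega,A+BF)=FR(i\omega,A)/(1-FR(i\omega,A)B)$, and the sectorial decay $\re\lambda_n\le-|\lambda_n|\sin\delta$ from (A\ref{assump:finite_unstable}) combine to give a tail estimate on the series defining $g$ and a positive lower bound on $|1-g(\mu,\tau)|$.

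It remains to treat $\mu=1$. Supposing $\Delta(\tau)x=x$ and expanding in $\{\phi_n\}$, matching coefficients yields $(e^{\tau\lambda_n}-1)\langle x,\psi_n\rangle=-\langle x,f\rangle(e^{\tau\lambda_n}-1)\langle b,\psi_n\rangle/\lambda_n$; cancelling $e^{\tau\lambda_n}-1\ne 0$ (guaranteed by (A\ref{assump:imaginary})) and pairing with $f$ gives $\langle x,f\rangle\bigl(1+\sum_n\langle b,\psi_n\rangle\langle\phi_n,f\rangle/\lambda_n\bigr)=0$, so (A\ref{assump:b_F_cond}) forces $x=0$ and hence $1\notin\sigma_p(\Delta(\tau))$. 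Finally, $1\in\sigma(\Delta(\tau))$ because $1$ is a non-isolated accumulation point of $\sigma(T(\tau))$: by analytic Fredholm theory applied to the compact (rank-one) perturbation $S(\tau)F$, accumulation points of $\sigma(T(\tau))$ cannot lie in $\rho(\Delta(\tau))$. The truly delicate ingredient is the high-frequency estimate in the third regime above, where sampling aliasing destroys the naive approximation of $g$ by the continuous transfer function and the interplay between \eqref{eq:imaginary_axis_RS_case} and the sectorial decay must be used carefully.
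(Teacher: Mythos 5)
Your overall architecture matches the paper's: factor $\mu I-\Delta(\tau)=(\mu I-T(\tau))\bigl(I-R(\mu,T(\tau))S(\tau)F\bigr)$, reduce invertibility to the scalar $g(\mu,\tau)=FR(\mu,T(\tau))S(\tau)$, and compare $g$ with the continuous-time transfer function $FR(\lambda,A)B$, whose modulus-from-one is bounded below on $\rho(A)\cap\overline{\mathbb{C}_0}$ using (A\ref{assump:closed_loop})--(A\ref{assump:b_F_cond}). Your handling of $\mu=1$ is correct and in fact a little more elementary than the paper's Lemma~\ref{lem:spectal_cond}: direct coefficient matching in the Riesz basis for $1\notin\sigma_p(\Delta(\tau))$, and invariance of the essential spectrum under the rank-one perturbation for $1\in\sigma(\Delta(\tau))$ (the paper instead shows $I-\Delta(\tau)$ is injective but not surjective via the factorization and Lemma~\ref{lem:injective_I_T}).

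The genuine gap is exactly where you flag it and then stop: the uniform lower bound on $|1-g(\mu,\tau)|$ over $\mathbb{T}\setminus\{1\}$ for all sufficiently small $\tau$. The paper closes this (Lemma~\ref{lem:discrete_time_trans_func_bound}) with two ingredients you do not supply. First, the infinite stable tail of both series must be made small \emph{uniformly in $z\in\overline{\mathbb{E}_1}\setminus\{1\}$ and in $\tau$}: the sectorial geometry of (A\ref{assump:finite_unstable}) gives absolute bounds on $\bigl|\tfrac{1-e^{\tau\lambda_n}}{z-e^{\tau\lambda_n}}\bigr|$ and on $\bigl|\tfrac{1-e^{\tau\lambda_n}}{z-e^{\tau\lambda_n}}\cdot\tfrac{1}{\lambda_n}\bigr|$, and then (A\ref{assump:b_in_invA}) together with $\sum_n|\langle b,\psi_n\rangle|^2<\infty$ yields a dominating summable sequence independent of $z$ and $\tau$, reducing everything to a finite sum. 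Second---and this is what your ``Sherman--Morrison plus \eqref{eq:imaginary_axis_RS_case}'' suggestion cannot reach---on the aliased part of $\overline{\mathbb{E}_1}$ (the paper's $\Omega_3$, the points not of the form $e^{\tau\lambda}$ with $\re\lambda\ge 0$ and $|\tau\lambda|$ small) there is \emph{no} continuous-time frequency to compare with; there the remaining finite sum is controlled because each numerator $1-e^{\tau\lambda_n}$ is $O(\tau)$ while the denominators $z-e^{\tau\lambda_n}$ stay bounded away from zero, so the truncated discrete transfer function tends to $0$ and $|1+\cdots|>\epsilon_{\rm c}$ for small $\tau$ (the Rebarber--Townley argument the paper imports). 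The bound \eqref{eq:imaginary_axis_RS_case} enters only earlier, in the continuous-time estimate of Lemma~\ref{lem:continuous_time_trans_func_bound}; it does not by itself resolve aliasing. Without these two steps your third regime is an announcement of the difficulty rather than an argument.
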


With a slight modification of Theorem~\ref{thm:RS_C0}~a), we easily 
obtain the following properties of
the spectrum and the resolvent of $T(t)$ represented by \eqref{eq:T_rep}.
\begin{lemma}
	\label{lem:T_spectrum}
	Let $\{\lambda_n:n \in \mathbb{N}\} \subset \mathbb{C}$ satisfy 
	$\sup_{n \in \mathbb{N}}\re \lambda_n < \infty$.
	Suppose that $\{\phi_n :n \in \mathbb{N}\}$ is
	a Riesz basis for a Hilbert space $X$, and let
	$\{\psi_n:n \in \mathbb{N}\}$ be the 
	biorthogonal sequence to
	$\{\phi_n :n \in \mathbb{N}\}$. If we
	define $T(t) \in \mathcal{L}(X)$ by 
	\eqref{eq:T_rep} for $t\geq 0$, 
	then  
	$\sigma_p(T(t)) = \{e^{t \lambda_n} :n \in \mathbb{N}\}$ and 
	$\sigma(T(t)) = \overline{\{e^{t \lambda_n} :n \in \mathbb{N}\}}$
	for every $t\geq 0$.
	Moreover, for all $z \in \rho(T(t))$ and $t \geq 0$, the resolvent
	$R(z,T(t))$ is given by
	\begin{equation}
	\label{eq:resol_T}
	R\big(z,T(t)\big)x = \sum_{n=1}^{\infty} \frac{1}{z - e^{t \lambda_n}} \langle x, \psi_n \rangle \phi_n\qquad \forall x \in X.
	\end{equation}
\end{lemma}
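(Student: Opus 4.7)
The approach is to mirror the proof of Theorem~\ref{thm:RS_C0}(a), but since $T(t)$ need not itself be a Riesz-spectral operator in the sense of Definition~\ref{def:RSO} (the values $e^{t\lambda_n}$ may coincide for different $n$, and their closure may fail to be totally disconnected), one cannot quote that theorem directly; the calculation must be reproduced using only the diagonal representation \eqref{eq:T_rep} together with the Riesz basis properties collected in Lemma~\ref{lem:RB_prop}. Note that the boundedness of $T(t)$ on $X$ follows immediately from \eqref{eq:T_rep} and \eqref{eq:RB_constant} combined with $\sup_n |e^{t\lambda_n}| \leq e^{t\sup_n \re \lambda_n} < \infty$, so the operator is a legitimate element of $\mathcal{L}(X)$.

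First I would identify the point spectrum. Biorthogonality $\langle \phi_n,\psi_m\rangle = \delta_{nm}$ together with \eqref{eq:T_rep} gives $T(t)\phi_n = e^{t\lambda_n}\phi_n$, so every $e^{t\lambda_n}$ is an eigenvalue. Conversely, suppose $T(t)x = \mu x$ with $x \neq 0$. Expanding $x = \sum_n \langle x,\psi_n\rangle \phi_n$ via Lemma~\ref{lem:RB_prop} and substituting into \eqref{eq:T_rep} yields $\sum_n (e^{t\lambda_n}-\mu)\langle x,\psi_n\rangle \phi_n = 0$, and uniqueness of the Riesz basis expansion forces $(e^{t\lambda_n}-\mu)\langle x,\psi_n\rangle = 0$ for every $n$. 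Since $x \neq 0$ some coefficient is nonzero, whence $\mu \in \{e^{t\lambda_n}:n \in \mathbb{N}\}$. Hence $\sigma_p(T(t)) = \{e^{t\lambda_n}:n \in \mathbb{N}\}$.

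Next I would treat the full spectrum by exhibiting the resolvent on $\mathbb{C}\setminus\overline{\{e^{t\lambda_n}:n\in\mathbb{N}\}}$. Fix such a $z$; then $\delta := \inf_n|z-e^{t\lambda_n}|>0$. Define $R(z):X\to X$ by the right-hand side of \eqref{eq:resol_T}. The uniform bound $|z-e^{t\lambda_n}|^{-1} \leq \delta^{-1}$ combined with \eqref{eq:RB_constant} applied to partial sums shows that $R(z)$ is well-defined and belongs to $\mathcal{L}(X)$, with operator norm controlled by $\delta^{-1}$ up to the Riesz basis constants. A term-by-term computation then gives $(zI-T(t))R(z)x = R(z)(zI-T(t))x = x$ for every $x \in X$, so $z \in \rho(T(t))$ and $R(z,T(t))$ is given by \eqref{eq:resol_T}. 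Combined with the closedness of $\sigma(T(t))$ and the inclusion $\sigma_p(T(t)) \subset \sigma(T(t))$ just established, this yields $\sigma(T(t)) = \overline{\{e^{t\lambda_n}:n\in\mathbb{N}\}}$.

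I do not anticipate any serious obstacle; the entire argument is routine once phrased in Riesz basis coordinates. The only subtlety is the one flagged at the outset---$T(t)$ may fail to satisfy Definition~\ref{def:RSO}, so one must argue from \eqref{eq:T_rep} and \eqref{eq:RB_constant} directly rather than applying Theorem~\ref{thm:RS_C0}(a) off the shelf---and this is precisely the ``slight modification'' referred to in the paper.
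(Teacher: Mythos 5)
Your proof is correct and fills in exactly the argument the paper has in mind: the paper gives no explicit proof, stating only that the lemma follows from ``a slight modification of Theorem~\ref{thm:RS_C0}~a)'', and your verification in Riesz basis coordinates (including the observation that $T(t)$ itself may fail Definition~\ref{def:RSO} because the $e^{t\lambda_n}$ need not be simple and their closure need not be totally disconnected) is precisely that modification.
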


We also immediately obtain
a representation of the algebraic inverse of a Riesz-spectral operator $A$
with $0 \not\in \sigma_p(A)$.
\begin{lemma}
	\label{lem:L_prop}
	Let $A$ be a Riesz-spectral operator on a Hilbert space $X$ 
	as in Theorem~\ref{thm:RS_C0}, and
	assume that the eigenvalues $\{\lambda_n:n \in \mathbb{N} \}$ satisfy
	$\sup_{n \in \mathbb{N}} \re \lambda_n < \infty$ and $\lambda_n \not= 0$
	for all $n \in \mathbb{N}$.
	The operator $A_0$ defined by 
	\begin{equation*}
	A_0x := \sum_{n=1}^\infty \frac{1}{\lambda_n} \langle 
	x, \psi_n
	\rangle \phi_n
	\end{equation*}
	with domain
	\begin{equation*}
	D(A_0) := \left\{
	x \in X :
	\sum_{n=1}^\infty 
	\left| \frac{
	\langle x, \psi_n \rangle
	 }{\lambda_n }\right|^2< \infty
	\right\}
	\end{equation*}
	is the algebraic inverse $A^{-1}$ of $A$, i.e., 
	satisfies $D(A_0) = \ran(A)$, 
	$A_0A x= x$ for every $x \in D(A)$, and
	$AA_0 x = x$ for every $x \in D(A_0)$. Moreover, 
	 for all
	$x \in D(A^{-1})$ and $t \geq 0$,
	the semigroup $(T(t) )_{t \geq 0}$ generated by 
	$A$ satisfies
	$T(t)x \in D(A^{-1})$ and $A^{-1}T(t) x = T(t) A^{-1} x$.
\end{lemma}

Lemma~\ref{lem:L_prop} shows that if
(A\ref{assump:b_in_invA}) also holds, i.e.,
 $b \in D(A^{-1})$, then
$S(t)$ defined by \eqref{eq:S_def} is written as
\begin{align}
\label{eq:S_rep}
S(t) 
=
A^{-1} (T(t) - I)B =  \sum_{n=1}^\infty \frac{e^{t \lambda_n} - 1}{\lambda_n} \langle b, \psi_n \rangle \phi_n \qquad \forall t \geq0.
\end{align}
This and  \eqref{eq:resol_T} yield
\begin{align}
\big(z I - T(t)\big)^{-1}S(t) 
&= \sum^{\infty}_{n=1}\frac{e^{t \lambda_n} - 1}{z - e^{t \lambda_n}} \cdot
\frac{\langle b , \psi_n \rangle}{\lambda_n} \phi_n\qquad \forall z \in \rho\big(T(t)\big),~
\forall t\geq 0.
\label{eq:Resol_S_tau}
\end{align}
Using the expression \eqref{eq:S_rep}, we obtain
the following simple result on the spectrum of $\Delta(t)$.
\begin{lemma}
	\label{lem:spectal_cond}
	Let $A$ be a Riesz-spectral operator on a Hilbert space $X$ whose eigenvalues
	$\{\lambda_n:n \in \mathbb{N} \}$ satisfy
	$\sup_{n \in \mathbb{N}} \re \lambda_n < \infty$, 
	{\rm (A\ref{assump:imaginary})}, 
	and {\rm (A\ref{assump:origin})}.
	Assume that $B \in \mathcal{L}(X,\mathbb{C})$ and
	$F \in \mathcal{L}(\mathbb{C},X)$ in the form of \eqref{eq:B_F_rep}
	satisfy
	{\rm (A\ref{assump:b_in_invA})} and {\rm (A\ref{assump:b_F_cond})}.
	Then $1 \in \sigma(\Delta(t)) \setminus 
	\sigma_p(\Delta(t))$
	for every $t>0$.
\end{lemma}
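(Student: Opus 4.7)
The plan is to verify the two assertions $1 \notin \sigma_p(\Delta(t))$ and $1 \in \sigma(\Delta(t))$ separately by expanding vectors in the Riesz basis and manipulating coefficients directly. Write $x_n := \langle x,\psi_n\rangle$, $b_n := \langle b,\psi_n\rangle$, $f_n := \langle \phi_n, f\rangle$, and similarly $y_n := \langle y,\psi_n\rangle$. Since $b \in D(A^{-1})$ by (A\ref{assump:b_in_invA}), Lemma~\ref{lem:L_prop} and \eqref{eq:S_rep} give $S(t)u = u\,s(t)$ with $s(t) = \sum_n \frac{e^{t\lambda_n}-1}{\lambda_n} b_n \phi_n$; combined with the representation of $T(t)$ from Theorem~\ref{thm:RS_C0}, this yields
\[
\bigl\langle (I-\Delta(t))x,\psi_n\bigr\rangle \;=\; (1-e^{t\lambda_n})\Bigl(x_n + \tfrac{(Fx)\, b_n}{\lambda_n}\Bigr)\qquad (n\in\mathbb{N}).
\]
For every $t>0$, assumption (A\ref{assump:imaginary}) rules out $t\lambda_n \in 2\pi i\mathbb{Z}$, so $1-e^{t\lambda_n}\neq 0$ for all $n$.

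For injectivity, if $\Delta(t)x = x$ the above identity forces $x_n = -(Fx)\, b_n/\lambda_n$ for every $n$. Computing $Fx = \sum_n x_n f_n$ (the series converges absolutely by Cauchy--Schwarz together with (A\ref{assump:b_in_invA}) and the fact that $\{f_n\}\in\ell^2$, since the biorthogonal sequence of a Riesz basis is itself a Riesz basis) and substituting gives
\[
(Fx)\Bigl(1 + \sum_{n=1}^\infty \tfrac{b_n f_n}{\lambda_n}\Bigr)=0.
\]
The bracketed factor is nonzero by (A\ref{assump:b_F_cond}), so $Fx = 0$, and then $x_n = 0$ for every $n$, whence $x=0$. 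Thus $1 \notin \sigma_p(\Delta(t))$.

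For non-surjectivity, the equation $(I-\Delta(t))x = y$ is equivalent to $x_n = y_n/(1-e^{t\lambda_n}) - (Fx)\, b_n/\lambda_n$ for every $n$. Because $\{b_n/\lambda_n\}\in\ell^2$ by (A\ref{assump:b_in_invA}), the coefficient sequence $\{x_n\}$ lies in $\ell^2$ if and only if $\{y_n/(1-e^{t\lambda_n})\}$ does, independently of the value of $Fx$. Using (A\ref{assump:origin}), pick a subsequence with $\lambda_{n_k}\to 0$; then $|1-e^{t\lambda_{n_k}}|\to 0$, so after thinning we may assume $|1-e^{t\lambda_{n_k}}|\leq 2^{-k}$. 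Setting $y_{n_k}:=2^{-k}$ and all other coefficients to zero defines (via Lemma~\ref{lem:RB_prop}) a genuine $y\in X$, while $\sum_k |y_{n_k}/(1-e^{t\lambda_{n_k}})|^2 \geq \sum_k 1 = \infty$. Hence no $x\in X$ maps to $y$, so $I-\Delta(t)$ fails to be surjective and $1\in \sigma(\Delta(t))$. The main obstacle is exactly this second step: the single scalar $Fx$ must be unable to compensate for the unbounded growth of $1/(1-e^{t\lambda_n})$ along a subsequence accumulating at $0$, which is ensured precisely by the boundedness assumption (A\ref{assump:b_in_invA}) on $b$; the injectivity step is a comparatively routine application of (A\ref{assump:b_F_cond}).
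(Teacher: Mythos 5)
Your proof is correct, but it takes a genuinely different route from the paper's. You work entirely in coordinates: expanding $(I-\Delta(t))x$ in the Riesz basis via \eqref{eq:S_rep} and \eqref{eq:T_rep}, you get the componentwise identity $\langle (I-\Delta(t))x,\psi_n\rangle=(1-e^{t\lambda_n})\bigl(x_n+(Fx)b_n/\lambda_n\bigr)$, from which injectivity follows by solving for $Fx$ and invoking (A\ref{assump:b_F_cond}), and non-surjectivity follows by exhibiting an explicit $y$ whose would-be preimage has coefficients $y_n/(1-e^{t\lambda_n})$ outside $\ell^2$ along a subsequence with $\lambda_{n_k}\to 0$, the scalar correction $(Fx)b_n/\lambda_n$ being harmless because $\{b_n/\lambda_n\}\in\ell^2$ by (A\ref{assump:b_in_invA}). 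The paper instead argues structurally: it factors $I-\Delta(t)=(I-T(t))\bigl(I-(I-T(t))^{-1}S(t)F\bigr)$, identifies $(I-T(t))^{-1}S(t)=-A^{-1}B$ via Lemma~\ref{lem:L_prop}, shows the rank-one factor is boundedly invertible using Lemma~\ref{lem:bounded_operator_spectrum} and (A\ref{assump:b_F_cond}), and then transfers ``injective but not surjective'' from $I-T(t)$ (Lemma~\ref{lem:injective_I_T}, via the spectral inclusion and point-spectral mapping theorems) to $I-\Delta(t)$. Your argument is more elementary and self-contained --- it avoids the spectral mapping machinery and the $\sigma(TS)\setminus\{0\}=\sigma(ST)\setminus\{0\}$ lemma, and it makes the failure of surjectivity concrete --- whereas the paper's factorization is reused later (e.g.\ in the proof of Theorem~\ref{thm:T_SF_resolvent_set}) and generalizes beyond the diagonal setting. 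Two minor remarks: the justification $Fx=\sum_n x_nf_n$ needs only the boundedness of $F$ applied to the convergent expansion of $x$, so the appeal to (A\ref{assump:b_in_invA}) there is redundant; and your phrasing that $\{x_n\}\in\ell^2$ ``if and only if'' $\{y_n/(1-e^{t\lambda_n})\}\in\ell^2$ is slightly loose since $Fx$ depends on $x$, but the contradiction you actually run (both $\{x_n\}$ and $\{(Fx)b_n/\lambda_n\}$ lie in $\ell^2$, forcing the right-hand side into $\ell^2$) is sound.
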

\begin{proof}
	Let $t>0$. 
	The essential spectrum of $T(t)$ contains $1$ under  (A\ref{assump:origin}), and
	it follows from the compactness of $S(t)F$  that 
	$T(t)$ and $\Delta(t) = T(t)+S(t)F$ have the same essential spectrum;
	see, e.g., 
	Section~IV.1.20 of \cite{Engel2000} and
	Section~XVII.2 of \cite{Gohberg1990}
	for an essential spectrum.
	Hence $1 \in \sigma(\Delta(t))$.

	To show that $1 \not\in \sigma_p(\Delta(t))$, 
	let $v \in X$ satisfy $\Delta(t)v =v$.
	By \eqref{eq:S_rep}, we obtain
	\[
	(T(t) - I) (I+A^{-1}BF)v = 0.
	\]
	Lemma~\ref{lem:T_spectrum} and (A\ref{assump:imaginary}) give
	$1 \not\in \sigma_p(T(t))$. Therefore,
	$(I+A^{-1}BF)v = 0$.
	If $Fv = 0$, then $v=0$.
	Otherwise, 
	\begin{equation}
	\label{FAinvB_cond}
	-FA^{-1}BFv= Fv \not=0.
	\end{equation}
	By (A\ref{assump:b_F_cond}), 
	\[
	FA^{-1}B = \langle A^{-1}b,f  \rangle = 
	\sum_{n=1}^\infty \frac{\langle b, \psi_n \rangle \langle  \phi_n, f \rangle}
	{\lambda_n} \not=-1,
	\]
	which contradicts \eqref{FAinvB_cond}.
	Thus, $v =0$; i.e.,
	$1 \not\in \sigma_p(\Delta(t))$.
\end{proof}

The next lemma provides a useful property of 
the spectrum of the product of bounded operators; see, e.g., 
(3) in Section~III.2 of \cite{Gohberg1990}.
\begin{lemma}
	\label{lem:bounded_operator_spectrum}
	Let $X$ and $Y$ be Banach spaces. For all
	$T \in \mathcal{L}(X,Y)$ and $S \in \mathcal{L}(Y,X)$, 
	$\sigma(TS) \setminus \{0\} = \sigma(ST) \setminus \{0\}$ holds.
\end{lemma}

We obtain the estimate of $|1 - F(\lambda I - A)^{-1}B| $ for 
$\lambda \in \rho(A) \cap \overline{\mathbb{C}_0}$.
\begin{lemma}
	\label{lem:continuous_time_trans_func_bound}
	Let $A$ be a Riesz-spectral operator on a Hilbert space $X$ whose eigenvalues
	$\{\lambda_n:n \in \mathbb{N} \}$ satisfy
	{\rm (A\ref{assump:finite_unstable})} and $0 
	\in \overline{\{\lambda_n:n \in \mathbb{N} \}} \setminus
	\{\lambda_n:n \in \mathbb{N} \}$.
	Assume  that $B \in \mathcal{L}(X,\mathbb{C})$ and
	$F \in \mathcal{L}(\mathbb{C},X)$ in the form of \eqref{eq:B_F_rep}
	satisfy
	{\rm (A\ref{assump:closed_loop})--(A\ref{assump:b_F_cond})}. Then
	there exists $\epsilon>0$ such that 
	$|1 - F(\lambda I - A)^{-1}B| > \epsilon$ for 
	every $\lambda \in \rho(A) \cap \overline{\mathbb{C}_0} $.
\end{lemma}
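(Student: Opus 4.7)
My plan is to split the closed right half-plane into the region $\{|\lambda|\geq \eta\}$, where hypothesis (A\ref{assump:closed_loop}) controls the bound through a rank-one perturbation identity, and the neighborhood $\{|\lambda|<\eta\}$, where (A\ref{assump:finite_unstable}), (A\ref{assump:b_in_invA}), and (A\ref{assump:b_F_cond}) together permit a dominated-convergence argument at $0$.

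Since $BF$ is of rank one, the standard rank-one perturbation formula gives
\[
1 - FR(\lambda,A)B = \frac{1}{1 + FR(\lambda,A+BF)B}
\]
on $\rho(A) \cap \rho(A+BF)$. Uniform boundedness of the semigroup generated by $A+BF$ implies $\sigma(A+BF) \subset \{z\in\mathbb{C}:\re z\leq 0\}$, and combined with (A\ref{assump:closed_loop}) this yields $\overline{\mathbb{C}_0}\setminus\{0\} \subset \rho(A+BF)$. For fixed $\eta > 0$ I would establish
\[
M_\eta := \sup\{\|R(\lambda,A+BF)\|: \lambda\in \overline{\mathbb{C}_0},\ |\lambda|\geq \eta\} < \infty
\]
using (a) the Hille--Yosida estimate $\|R(\lambda,A+BF)\| \leq \sup_t\|T_{BF}(t)\|/\re\lambda$ for $\re\lambda$ bounded below, (b) hypothesis \eqref{eq:imaginary_axis_RS_case} with continuity of $R(\cdot,A+BF)$ on the compact arc $\{i\omega: \eta/\sqrt{2}\leq |\omega|\leq 1\}$ to bound $R(i\omega,A+BF)$ uniformly for $|\omega|\geq \eta/\sqrt{2}$, and (c) a Neumann-series extension of these imaginary-axis bounds into a thin strip of $\overline{\mathbb{C}_0}$. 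The displayed identity then delivers $|1-FR(\lambda,A)B| \geq (1 + \|F\|\|B\|M_\eta)^{-1}$ for every $\lambda \in \rho(A) \cap \overline{\mathbb{C}_0}$ with $|\lambda|\geq \eta$.

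For $\lambda$ near $0$ I would expand
\[
FR(\lambda,A)B = \sum_{n=1}^\infty \frac{\langle b, \psi_n\rangle \langle \phi_n, f\rangle}{\lambda - \lambda_n}
\]
using Theorem~\ref{thm:RS_C0}~a) and pass to the limit termwise. By (A\ref{assump:finite_unstable}) the eigenvalues split into three groups: the finitely many $\lambda_n \in \mathbb{C}_{-\alpha}\cap\Sigma_{\pi/2+\delta}$, all at positive distance from $0$; those with $\re\lambda_n \leq -\alpha$, for which $|\lambda-\lambda_n|\geq \alpha$ when $\lambda\in\overline{\mathbb{C}_0}$; and those with $|\arg\lambda_n|\geq \pi/2+\delta$, for which $\re(\lambda-\lambda_n)\geq |\lambda_n|\sin\delta$ yields $|\lambda-\lambda_n|\geq |\lambda_n|\sin\delta$. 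Combined via Cauchy--Schwarz with (A\ref{assump:b_in_invA}) and the Bessel-type bound $\sum_n|\langle\phi_n,f\rangle|^2<\infty$ (valid because the biorthogonal system $\{\psi_n\}$ is itself a Riesz basis), these estimates produce a summable series dominating $|\langle b,\psi_n\rangle\langle\phi_n,f\rangle/(\lambda-\lambda_n)|$ uniformly for $\lambda$ in a small right-half-plane neighborhood of $0$. Dominated convergence then gives
\[
\lim_{\substack{\lambda\to 0\\ \lambda\in\rho(A)\cap\overline{\mathbb{C}_0}}} FR(\lambda,A)B = -\sum_{n=1}^\infty \frac{\langle b,\psi_n\rangle\langle\phi_n,f\rangle}{\lambda_n},
\]
so that $1-FR(\lambda,A)B$ tends to $1+FA^{-1}B$, which is nonzero by (A\ref{assump:b_F_cond}). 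Hence $|1-FR(\lambda,A)B|\geq \epsilon_0$ on some punctured neighborhood $\rho(A)\cap\overline{\mathbb{C}_0}\cap\mathbb{D}_\eta$.

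Combining the two regions with $\epsilon = \min\{\epsilon_0,\,(1+\|F\|\|B\|M_\eta)^{-1}\}$ finishes the argument. The most delicate step is the dominated-convergence passage to the limit at $0$: the sectorial constraint (A\ref{assump:finite_unstable}) is essential precisely because it forces the eigenvalue accumulation at $0$ to stay in the left-half-plane sector, keeping $|\lambda-\lambda_n|$ comparable to $|\lambda_n|$ as $\lambda$ approaches $0$ from $\overline{\mathbb{C}_0}$; without it, the dominating series could fail to be summable.
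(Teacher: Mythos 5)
Your proposal is correct and follows essentially the same two-region strategy as the paper's proof: the rank-one identity $1-FR(\lambda,A)B=\bigl(1+FR(\lambda,A+BF)B\bigr)^{-1}$ combined with Hille--Yosida and Neumann-series resolvent bounds for $|\lambda|\geq\eta$, and a dominated-convergence limit at $0$ driven by the sector condition in (A\ref{assump:finite_unstable}) together with (A\ref{assump:b_in_invA}) and (A\ref{assump:b_F_cond}). The only cosmetic difference is that you bound the scalar series termwise via $|\lambda-\lambda_n|\geq|\lambda_n|\sin\delta$ and Cauchy--Schwarz, whereas the paper establishes the norm convergence $(re^{i\theta}-A)^{-1}b\to-A^{-1}b$ uniformly in $\theta$ and then applies the bounded functional $F$.
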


\begin{proof}
	We divide the proof into two cases.
	First, we consider the case where 
	$\lambda \in \rho(A)$ belongs to 
	$\overline{\mathbb{C}_0} \setminus \mathbb{D}_\eta$
	for $\eta >0$. 
	To this end, we employ the condition \eqref{eq:imaginary_axis_RS_case} in (A\ref{assump:closed_loop}).
	Second, we study the case $\lambda \in (\overline{\mathbb{C}_0} \cap \mathbb{D}_\eta) \setminus \{0\}$
	for a sufficiently small $\eta >0$, using (A\ref{assump:finite_unstable}),
	(A\ref{assump:b_in_invA}), and
	(A\ref{assump:b_F_cond}).

	Let $\lambda \in \rho (A)$ be given. Since
	\[
	\lambda I - A - BF = (\lambda I - A)(I - (\lambda I - A)^{-1}BF)
	\]
	and 
	since $\sigma((\lambda I - A)^{-1}BF) \setminus \{0 \} 
	= \sigma(F(\lambda I - A)^{-1}B) \setminus \{0 \} $
	by Lemma~\ref{lem:bounded_operator_spectrum},
	it follows that 
	\[
	\lambda \in  \rho (A+BF) \quad \Leftrightarrow \quad 
	1 \in \rho ((\lambda I - A)^{-1}BF) \quad \Leftrightarrow \quad 
	1 \in \rho (F(\lambda I - A)^{-1}B).
	\]
	Define $G(\lambda ) := F(\lambda I - A)^{-1}B$. A straightforward calculation shows that
	\begin{equation}
	\label{eq:extension}
	\frac{1}{1 - G(\lambda)} = F(\lambda I - A - BF)^{-1}B + 1\qquad \forall \lambda \in \rho(A) \cap \rho(A+BF).
	\end{equation}
	Using this equation, we can extend $1/(1 - G)$, which is defined only on $\rho(A) \cap \rho(A+BF)$,
	to a holomorphic function on $\rho(A+BF) \supset \overline{\mathbb{C}_0} \setminus \{0 \}$.
	
	\begin{figure}[tb]
	\centering
	\includegraphics[width = 5cm]{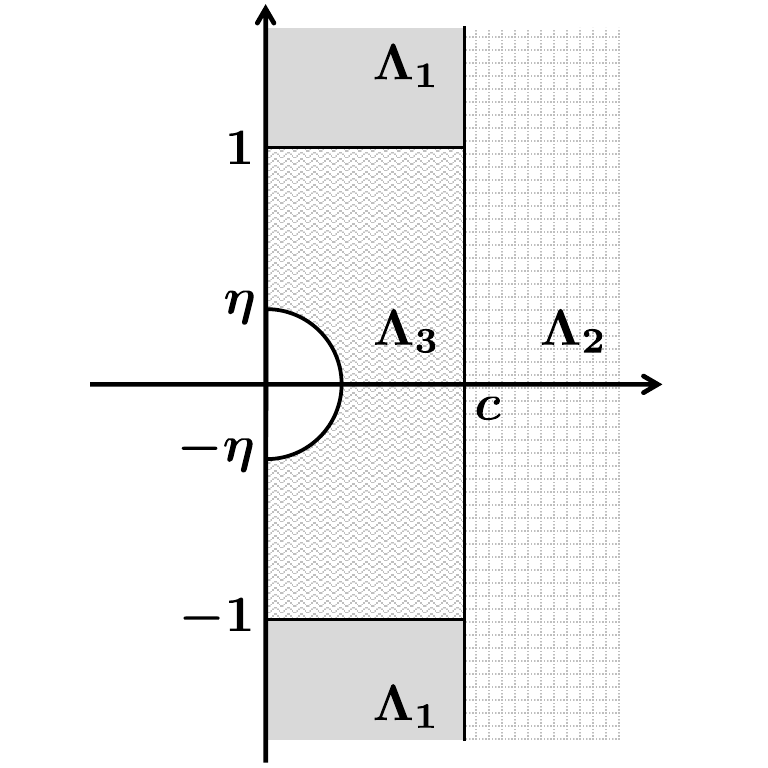}
	\caption{Sets $\Lambda_1$, $\Lambda_2$, and $\Lambda_3$.}
	\label{fig:Lambda}
\end{figure}
	Combining \eqref{eq:imaginary_axis_RS_case} in
	(A\ref{assump:closed_loop}) and 
	the Neumann series of resolvents (see, e.g., Proposition IV.1.3 of \cite{Engel2000}), 
	we have that $\|R(\lambda,A+BF)\| \leq 2M$
	for every $\lambda \in \Lambda_1 := \{\lambda \in \mathbb{C}: 0\leq \re \lambda \leq c, |\im \lambda | > 1\}$,
	where
	\[
	M := \sup_{\substack{\omega \in \mathbb{R}\\ |\omega| > 1}}
	\| R(i\omega, A+BF) \|,\qquad c := \frac{1}{2M}.
	\]
	By the uniform boundedness of $(T_{BF}(t))_{t\geq 0}$, the 
	Hille-Yosida theorem (see, e.g., Theorem II.3.8 of \cite{Engel2000}) shows that 
	\[
	\| R(\lambda, A+BF) \| \leq \frac{\sup_{t \geq 0}\|T_{BF}(t)\|}{c}
	\]
	for every $\lambda \in \Lambda_2 :=\mathbb{C}_c$.
	For $0 < \eta < \min\{c,1\}$,
	the resolvent $R(\lambda, A+BF)$ is holomorphic on the compact set 
	\[
	\Lambda_3 := 
	\{
	\lambda \in \mathbb{C}:
	|\lambda| \geq \eta,~0\leq \re \lambda \leq c,~|\im \lambda| \leq 1 
	\}.
	\]
	Therefore, 
	$\|R(\lambda,A+BF)\|$ is uniformly bounded on $\Lambda_3$.
	Since $\overline{\mathbb{C}_0} \setminus \mathbb{D}_\eta = 
\Lambda_1 \cup \Lambda_2 \cup \Lambda_3$ (see Fig.~\ref{fig:Lambda}),
we conclude from \eqref{eq:extension} that 
	\[
	\sup_{\lambda \in \overline{\mathbb{C}_0} \setminus \mathbb{D}_\eta}
	\left|\frac{1}{1 - G(\lambda)}\right| < \infty.
	\]

	By (A\ref{assump:finite_unstable}), there exists $\eta_1 >0$ 
	such that 
	\begin{equation}
	\label{eq:origin_neighborhood}
	\mathbb{D}_{\eta_1} \cap \Sigma_{\pi/2+\delta} \cap \{\lambda_n:
	n \in \mathbb{N}\}
	=\emptyset.
	\end{equation}
	Hence $(\overline{\mathbb{C}_0} \cap \mathbb{D}_{\eta_1}) 
	\setminus \{0\} \subset \rho(A)$.
	It remains to prove that there exist $\epsilon_1>0$ 
	and $\eta \in (0,\eta_1)$ such that
	\begin{equation}
	\label{eq:I_G_lower_bound}
	|1 - G(\lambda)| > \epsilon_1 \qquad \forall  \lambda \in (\overline{\mathbb{C}_0} \cap \mathbb{D}_{\eta}) 
	\setminus \{0\}.
	\end{equation}
	Note that
	$b \in D(A^{-1})$ by (A\ref{assump:b_in_invA}). It
	is enough to show that 
	$(re^{i\theta}- A)^{-1} b$ converges to $-A^{-1}b$ uniformly on $\theta \in [-\pi/2,\pi/2]$ as $r \to 0$.
	More precisely, we will show that for 
	every $\epsilon_2 >0$, there exists $\eta \in (0,\eta_1)$ such that 
	\[
	\|(re^{i\theta}- A)^{-1} b + A^{-1}b\|  < \epsilon_2
	\] 
	for all  $r \in (0,\eta)$ and all $\theta \in [-\pi/2, \pi/2]$. 
	Indeed, 
	using
	this fact and  $\epsilon_3  := |1 + FA^{-1}b|  >0$ 
	by  (A\ref{assump:b_F_cond}), we see that 
	\[
	|1 - G(re^{i\theta})| \geq |1 + FA^{-1}b| - 
	| F(re^{i\theta}- A)^{-1} b + FA^{-1}b| 
	> \epsilon_3 - \|F\|\epsilon_2 
	\]
	for all $r \in (0,\eta)$ and all $\theta \in [-\pi/2, \pi/2]$.
	Hence 
	if $\|F\|\epsilon_2 < \epsilon_3$, then
	\eqref{eq:I_G_lower_bound} holds 
	with $\epsilon_1 := \epsilon_3 - \|F\|\epsilon_2  >0$.

	Since 
	$r e^{i\theta} \in \rho(A)$
	for every $r \in (0,\eta_1)$ and 
	$\theta \in [-\pi/2, \pi/2]$, it follows from 
	Theorem~\ref{thm:RS_C0}~a) and Lemma~\ref{lem:L_prop}
	that
	\begin{align*}
	(re^{i\theta}- A)^{-1} b + A^{-1}b = 
	\sum_{n=1}^\infty
	\left( 
	\frac{1}{re^{i\theta} - \lambda_n} + \frac{1}{\lambda_n}\right)
	\langle b, \psi_n \rangle \phi_n
	\end{align*}
	for every $r \in (0,\eta_1)$ and 
	$\theta \in [-\pi/2, \pi/2]$.
	Using \eqref{eq:RB_constant}, we obtain
	\begin{align*}
	\|(re^{i\theta}- A)^{-1}b + A^{-1}b\|^2 &\leq 
	M_{\rm b}\sum_{n=1}^\infty
	\left|
	\left( 
	\frac{1}{re^{i\theta} - \lambda_n} + \frac{1}{\lambda_n}\right)
	\langle b, \psi_n \rangle
	\right|^2 \\
	&\leq
	M_{\rm b} \sum_{n=1}^\infty g_n(r)
	\qquad 
	\forall r \in (0,\eta_1),~
	\forall \theta \in \left[ -\frac{\pi}{2} , \frac{\pi}{2} \right],
	\end{align*}
	where
	\[
	g_n(r) := \sup_{-\pi/2 \leq \theta \leq \pi/2}\left|\left( 
	\frac{1}{re^{i\theta} - \lambda_n} + \frac{1}{\lambda_n}\right)
	\langle b, \psi_n \rangle
	\right|^2.
	\]
	
	Let $r \in (0, \eta_1/2)$ and $\theta \in [-\pi/2, \pi/2]$ be given.
	Suppose that 
	$\lambda _n \in \Sigma_{\pi/2+\delta}$. Then 
	$|\lambda_n| \geq \eta_1$ by \eqref{eq:origin_neighborhood}, and hence
	\begin{equation*}
	|re^{i\theta} - \lambda_n| \geq \frac{\eta_1}{2} > r.
	\end{equation*}
	Since 
	\begin{equation}
	\label{eq:lam_diff}
	\frac{1}{re^{i\theta} - \lambda_n} + \frac{1}{\lambda_n}= 
	\frac{re^{i\theta}}{re^{i\theta} - \lambda_n}\cdot \frac{1}{\lambda_n},
	\end{equation}
	it follows that
	\begin{equation}
	\label{eq:g_est1}
	g_n(r) \leq \left| \frac{\langle b, \psi_n \rangle}{\lambda_n} \right|^2
	\quad \text{if $\lambda_n \in \Sigma_{\pi/2+\delta}$}.
	\end{equation}
	Suppose next that  $\lambda _n \in \mathbb{C} \setminus 
	\Sigma_{\pi/2+\delta}$.
	Using the estimates
	\[
	 |\lambda_n| \leq \frac{|\re \lambda_n|}{\sin \delta},\quad
	|re^{i \theta} - \lambda_n| \geq |\re \lambda_n|,
	\]
	we obtain
	\begin{align*}
	\left|\frac{re^{i\theta}}{re^{i\theta} - \lambda_n} \right|
	\leq 
	1+ 
	\left|\frac{\lambda_n}{re^{i\theta} - \lambda_n} \right| \
	\leq 1 + \frac{1}{\sin \delta}. 
	\end{align*}
	By \eqref{eq:lam_diff}, 
	\begin{equation}
	\label{eq:g_est2}
	g_n(r) \leq 
	\left(
	1 + \frac{1}{\sin \delta}
	\right)^2
	\left| \frac{\langle b, \psi_n \rangle}{\lambda_n} \right|^2
	\quad \text{if $\lambda_n \in \mathbb{C} \setminus 
		\Sigma_{\pi/2+\delta}$}.
	\end{equation}
	
	The estimates \eqref{eq:g_est1} and \eqref{eq:g_est2} yield
	\[
	g_n(r) \leq 	\left(
	1 + \frac{1}{\sin \delta}
	\right)^2
	\left|\frac{\langle b ,\psi_n \rangle}{\lambda_n} \right|^2
	\qquad \forall r \in \left(
	0,\frac{\eta_1}{2}
	\right),~\forall n \in \mathbb{N}.
	\]
	Combining this with (A\ref{assump:b_in_invA}), we obtain
	\[
	\lim_{r \to 0} \sum_{n=1}^{\infty} g_n(r) =  \sum_{n=1}^{\infty} \lim_{r \to 0} g_n(r) =0.
	\]
	Thus, 
	$(re^{i\theta}- A)^{-1} b$ converges to $-A^{-1}b$ uniformly on $\theta \in [-\pi/2,\pi/2]$ as $r\to 0$. This completes the proof.
\end{proof}

\begin{remark}
	{\em
		Combining 
	\eqref{eq:extension} and Lemma~\ref{lem:continuous_time_trans_func_bound},
	we see that 
	the transfer function $G_{BF}(\lambda) := 
	F(\lambda I - A - BF)^{-1}B$ is holomorphic 
	and bounded on $\mathbb{C}_0$.
	This property is equivalent to
	the exponential stability of $(T_{BF}(t) )_{t\geq 0}$
	if $(A,B,F)$ is exponentially 
	stabilizable and exponentially 
	detectable; see, e.g., Theorem~VI. 8.35 of \cite{Engel2000}.
	However, since $0$ is an accumulation point of $\sigma_p(A)$  in our problem setting,
	$(A,B,F)$ is not exponentially 
	stabilizable or exponentially 
	detectable by Theorem~8.2.3 of \cite{Curtain2020}.
	}
\end{remark}

As in the robustness analysis of exponential stability with respect to sampling
given in Theorem~2.1 of
\cite{Rebarber2006},
we connect the estimates of the continuous-time system and
the discrete-time system.
\begin{lemma}
	\label{lem:discrete_time_trans_func_bound}
	Let $A$ be a Riesz-spectral operator on a Hilbert space $X$ whose eigenvalues
	$\{\lambda_n:n \in \mathbb{N} \}$ satisfy
	{\rm (A\ref{assump:finite_unstable})} and $0 
	\in \overline{\{\lambda_n:n \in \mathbb{N} \}} \setminus
	\{\lambda_n:n \in \mathbb{N} \}$.
	Let $B \in \mathcal{L}(X,\mathbb{C})$ and $F \in \mathcal{L}(\mathbb{C},X)$
	be given by \eqref{eq:B_F_rep}, and 
	assume that  
	{\rm (A\ref{assump:b_in_invA})} holds.
	If there exists $\epsilon_{\rm c}\in (0,1)$ such that 
	\begin{equation}
	\label{eq:continuous_lower_bound}
	|1 -  F(\lambda I - A)^{-1}B| > \epsilon_{\rm c} \qquad \forall \lambda \in \rho(A) \cap \overline{\mathbb{C}_0},
	\end{equation}
	then, for every $\epsilon_{\rm d}  \in (0,\epsilon_{\rm c})$,
	there exists $\tau^*>0$ such that
	for every $\tau \in (0,\tau^*)$,
	\begin{equation}
	\label{eq:discrete_lower_bound}
	\big|1 - F\big(z I - T(\tau)\big)^{-1} S(\tau)\big| > \epsilon_{\rm d}\qquad 
	\forall z \in \rho\big(T(\tau)\big) \cap \overline{\mathbb{E}_1}.
	\end{equation} 
\end{lemma}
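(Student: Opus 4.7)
The plan is to parametrize $\rho(T(\tau)) \cap \overline{\mathbb{E}_1}$ by the exponential map $z = e^{\tau\lambda}$ and reduce the discrete-time lower bound to the given continuous-time one by comparing $G_d(z,\tau) := F(zI - T(\tau))^{-1}S(\tau)$ with $G(\lambda) := F(\lambda I - A)^{-1}B$. For each $z \in \rho(T(\tau)) \cap \overline{\mathbb{E}_1}$ I take the principal branch $\lambda := (1/\tau)\log z \in \overline{\mathbb{C}_0}$ with $\tau|\im\lambda| \leq \pi$; by (A\ref{assump:finite_unstable})--(A\ref{assump:origin}) the set $\overline{\{\lambda_n\}} \cap \overline{\mathbb{C}_0}$ consists of finitely many $\lambda_n$ together with $\{0\}$, all of which correspond under the exponential to points in $\sigma(T(\tau)) \cup \{1\}$ that are avoided by $z$, so $\lambda \in \rho(A) \setminus \{0\}$.

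Next I derive a clean error formula. Since $b \in D(A^{-1})$ by (A\ref{assump:b_in_invA}), Lemma~\ref{lem:L_prop} yields $S(\tau) = A^{-1}(T(\tau) - I)B$, whence a resolvent manipulation gives $G_d(z,\tau) = -FA^{-1}B + (z-1)FA^{-1}(zI-T(\tau))^{-1}B$ and analogously $G(\lambda) = -FA^{-1}B + \lambda FA^{-1}(\lambda I - A)^{-1}B$. Subtracting and expanding both resolvents in the Riesz basis produces, with $\alpha_n := \langle b, \psi_n\rangle \langle \phi_n, f\rangle$ and $\varphi(u) := (e^u - 1)/u$,
\begin{equation*}
G_d(e^{\tau\lambda}, \tau) - G(\lambda) \;=\; \sum_{n=1}^{\infty} \frac{\lambda\, \alpha_n}{\lambda_n(\lambda - \lambda_n)}\bigl(Q_n(\tau, \lambda) - 1\bigr),\qquad Q_n(\tau, \lambda) := \frac{\varphi(\tau\lambda)}{e^{\tau\lambda_n}\,\varphi(\tau(\lambda - \lambda_n))},
\end{equation*}
which satisfies $Q_n(0,\lambda) = 1$.

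The core step is to prove that this error tends to $0$ uniformly in admissible $\lambda$ as $\tau \to 0^+$. For a summable majorant I would use Cauchy--Schwarz combined with (A\ref{assump:b_in_invA}) and Lemma~\ref{lem:RB_prop}\,b) to get $\sum_n |\alpha_n/\lambda_n| < \infty$, and the same sectorial geometry as in the proof of Lemma~\ref{lem:continuous_time_trans_func_bound} to obtain a constant $C_\delta$ (depending only on $\delta$ in (A\ref{assump:finite_unstable})) with $|\lambda/(\lambda - \lambda_n)| \leq C_\delta$ for $\lambda \in \overline{\mathbb{C}_0}$ and all but the finitely many $\lambda_n \in \overline{\mathbb{C}_0}$. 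I would then split the estimate of $Q_n - 1$ by region: on the compact part $|\lambda| \leq M$, all of $\varphi(\tau\lambda)$, $e^{\tau\lambda_n}$, $\varphi(\tau(\lambda - \lambda_n))$ tend to $1$ uniformly so dominated convergence handles this range; when $\re\lambda$ is large the Neumann series for $(zI - T(\tau))^{-1}$ together with $\|R(\lambda, A)\| = O(1/\re\lambda)$ forces $G_d$ and $G$ to be individually small. The \textbf{main obstacle} is the remaining slab with $\re\lambda$ bounded and $|\im\lambda|$ of order $\pi/\tau$: there $\tau\lambda$ is $O(1)$, so $Q_n(\tau,\lambda) - 1$ does \emph{not} vanish term by term, and one must instead show from the Riesz-basis square-summability of $\langle b, \psi_n\rangle$ that both $G_d(e^{\tau\lambda}, \tau)$ and $G(\lambda)$ are individually of order $|\im\lambda|^{-1}$ there, so that their difference is small.

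Once the uniform estimate $|G_d(z, \tau) - G(\lambda)| < \epsilon_{\rm c} - \epsilon_{\rm d}$ is established for all $\tau \in (0, \tau^\ast)$, the triangle inequality
\begin{equation*}
|1 - G_d(z, \tau)| \;\geq\; |1 - G(\lambda)| - |G_d(z, \tau) - G(\lambda)| \;>\; \epsilon_{\rm c} - (\epsilon_{\rm c} - \epsilon_{\rm d}) \;=\; \epsilon_{\rm d}
\end{equation*}
delivers \eqref{eq:discrete_lower_bound} and completes the proof.
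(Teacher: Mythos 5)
Your overall strategy---pull $z\in\rho(T(\tau))\cap\overline{\mathbb{E}_1}$ back to $\lambda=(1/\tau)\log z\in\rho(A)\cap\overline{\mathbb{C}_0}$, bound $|G_d(e^{\tau\lambda},\tau)-G(\lambda)|$ uniformly by $\epsilon_{\rm c}-\epsilon_{\rm d}$, and conclude by the triangle inequality---has a genuine gap at the finitely many eigenvalues $\lambda_n$ with $\re\lambda_n>0$ permitted by (A\ref{assump:finite_unstable}). Near such a $\lambda_n$, both transfer functions have a pole ($G$ at $\lambda_n$, $G_d(\cdot,\tau)$ at $e^{\tau\lambda_n}$), and for any \emph{fixed} $\tau>0$ their difference is unbounded: in your own notation the offending term is $\frac{\lambda\,\alpha_n}{\lambda_n(\lambda-\lambda_n)}\bigl(Q_n(\tau,\lambda)-1\bigr)$, and while $Q_n(\tau,\lambda)-1=O(\tau)$ it is \emph{not} $O(\lambda-\lambda_n)$, so the term blows up as $\lambda\to\lambda_n$ (equivalently, $G_d-G\sim\alpha_n(e^{-\tau\lambda_n}-1)/(\lambda-\lambda_n)$ there). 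Hence no $\tau^*$ makes the uniform comparison $|G_d-G|<\epsilon_{\rm c}-\epsilon_{\rm d}$ true on all of $\rho(T(\tau))\cap\overline{\mathbb{E}_1}$ whenever $\sigma(A)\cap\mathbb{C}_0\neq\emptyset$, which is the case of interest. Your "dominated convergence on the compact part $|\lambda|\le M$" silently assumes uniformity that fails precisely in these punctured neighborhoods. The conclusion \eqref{eq:discrete_lower_bound} is still true there, but for a different reason: the pole term dominates, so $|1-G_d|$ is \emph{large} near $e^{\tau\lambda_n}$, and this must be argued directly rather than by comparison with $G$.

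This is exactly how the paper proceeds. It first splits the series into a tail $\sum_{n\ge N}$ and a finite truncation: the tail of the continuous-time sum is shown to be uniformly small on $\overline{\mathbb{C}_0}\setminus\{0\}$ and the tail of the discrete-time sum uniformly small on $\overline{\mathbb{E}_1}\setminus\{1\}$ for all $\tau\in(0,1)$ (each separately, using the sectorial estimates and (A\ref{assump:b_in_invA})); this also disposes of your "slab" $|\im\lambda|\sim\pi/\tau$ without needing the too-strong claim that $G_d=O(|\im\lambda|^{-1})$ there. For the finite truncation it then invokes the region decomposition of Rebarber--Townley: on $\Omega_0\cap\Omega_1$ (where $|\tau\lambda|$ is small and $\lambda$ is bounded away from $\lambda_1,\dots,\lambda_{N-1}$) the discrete finite sum is compared to the continuous one, as in your plan; but on the neighborhoods $\Omega_0\cap\Omega_2$ of the relevant eigenvalues and on $\Omega_3=\overline{\mathbb{E}_1}\setminus\Omega_0$ the lower bound $|1+\sum_{n<N}\cdots|>\epsilon_{\rm c}$ is established directly. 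To repair your proof you would need to add both ingredients: the tail/truncation split (your single-series majorant does not by itself localize the problem to finitely many poles) and a separate direct argument near the unstable eigenvalues.
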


\begin{proof}
	The proof is divided into four steps.
	In Steps~1 and 2, we study the infinite-dimensional tail
	of the series of $(\lambda I - A)^{-1}B$
	and $(z I - T(\tau))^{-1} S(\tau)$,
	respectively.
	These infinite-dimensional tails
	can be regarded as the errors of 
	finite-dimensional approximations.
	The objective of Steps~1 and 2
	is to show that such an error
	becomes arbitrarily small as the approximation order 
	increases.
	In Step~3, we look at the finite-dimensional approximation
	of the transfer function $F(z I - T(\tau))^{-1} S(\tau)$.
	We do not encounter any difficulties arising from
	strong stability in this step, and so the result on
	exponential stability obtained in the proof 
	of Theorem~2.1 of \cite{Rebarber2006} can be
	applied without modifications.
	In Step~4, we investigate the set 
	$\rho(T(\tau)) \cap \overline{\mathbb{E}_1}$ in order to complete the proof.

	{\em Step 1:}
	We show that 
	for every $\epsilon >0$, there exists 
	$N_0^{\rm c} \in \mathbb{N}$ such that 
	\begin{equation}
	\label{eq:continuous_large_case}
	\sup_{\lambda \in \overline{\mathbb{C}_0} \setminus \{0 \}} 
	\left\|
	\sum_{n = N}^\infty \frac{\langle b , \psi_n \rangle}{\lambda - \lambda_n} \phi_n
	\right\| \leq \epsilon\qquad \forall N\geq N_0^{\rm c}.
	\end{equation}
	
	Let $\lambda \in \overline{\mathbb{C}_0} \setminus \{0 \}$.
	By (A\ref{assump:finite_unstable}), there exists $N_1 \in \mathbb{N}$ such that 
	\begin{equation}
	\label{eq:lambda_cond_large}
	\lambda_n \in \mathbb{C} \setminus \mathbb{C}_{-\alpha}\quad \text{or} \quad 
	\lambda_n \in \mathbb{C} \setminus 
	\Sigma_{\pi/2+\delta}\qquad \forall n \geq N_1.
	\end{equation}
	If $\lambda_n \in \mathbb{C} \setminus  \mathbb{C}_{-\alpha}$, then
	\begin{equation}
	\label{eq:upper_cont_case1}
	\left|
	\frac{1}{\lambda - \lambda_n} + \frac{1}{\lambda_n}
	\right| \leq \frac{2}{\alpha} =: \Gamma_1.
	\end{equation}
	Suppose that $\lambda_n \in \mathbb{C} \setminus  \Sigma_{\pi/2+\delta}$. We obtain
	\[
	\frac{1}{\lambda - \lambda_n} + \frac{1}{\lambda_n}
	= 
	\left(
	1 + \frac{\lambda_n}{\lambda - \lambda_n}
	\right)\frac{1}{\lambda_n}.
	\]
	Since 
	\[
	|\lambda_n| \leq \frac{|\re \lambda_n|}{\sin \delta},\quad
	|\lambda- \lambda_n| \geq |\re \lambda_n|,
	\]
	it follows that 
	\begin{equation}
	\label{eq:upper_cont_case2}
	\left|
	\frac{1}{\lambda - \lambda_n} + \frac{1}{\lambda_n}
	\right| \leq 
	\left(
	1 + \frac{1}{\sin \delta}
	\right)
	\frac{1}{|\lambda_n| } =: \frac{\Gamma_2 }{|\lambda_n|}.
	\end{equation}
	
	Let $\epsilon >0$ be given.
	By
	Lemma~\ref{lem:RB_prop} b) and  (A\ref{assump:b_in_invA}),
	\begin{equation}
	\label{eq:b_bounded}
	\sum_{N=1}^{\infty} \left| 
	\langle b , \psi_n \rangle
	\right|^2 < \infty,\quad
	\sum_{N=1}^{\infty} \left| 
	\frac{\langle b , \psi_n \rangle}{\lambda_n}
	\right|^2 < \infty.
	\end{equation}
	Therefore, there exists $N_2 \geq N_1$ such that
	\[
	\sum_{n=N}^{\infty} \left| 
	\langle b , \psi_n \rangle
	\right|^2 < \frac{\epsilon^2}{8M_{\rm b} \Gamma_1^2},\quad
	\sum_{n=N}^{\infty} \left| 
	\frac{\langle b , \psi_n \rangle}{\lambda_n}
	\right|^2 < \frac{\epsilon^2}{8M_{\rm b} \Gamma_2^2}\qquad \forall N \geq N_2.
	\]
	Combining this with \eqref{eq:RB_constant},
	we obtain
	\[
	\left\|
	\sum_{n=N}^\infty \frac{\langle b ,\psi_n \rangle }{\lambda_n} \phi_n
	\right\|^2 \leq 
	M_{\rm b} \sum_{n=N}^{\infty} \left| 
	\frac{\langle b , \psi_n \rangle}{\lambda_n}
	\right|^2< \frac{\epsilon^2}{4}\qquad \forall N \geq N_2.
	\]
	Moreover, since 
	\eqref{eq:upper_cont_case1} and \eqref{eq:upper_cont_case2} yield
	\[
	\left|
	\frac{1}{\lambda - \lambda_n} + \frac{1}{\lambda_n}
	\right|^2  \leq \Gamma_1^2 + \frac{\Gamma_2^2 }{|\lambda_n|^2}
	\qquad \forall N \geq N_1,
	\]
	it follows from \eqref{eq:RB_constant} that, for every $N \geq N_2$, 
	\begin{align*}
	\left\|
	\sum_{n = N}^\infty \frac{\langle b , \psi_n \rangle}{\lambda - \lambda_n} \phi_n +
	\sum_{n = N}^\infty \frac{\langle b , \psi_n \rangle}{\lambda_n} \phi_n 
	\right\|^2 &\leq M_{\rm b} \sum_{n = N}^\infty \left|
	\frac{1}{\lambda - \lambda_n} + \frac{1}{\lambda_n}
	\right|^2 \cdot |\langle b, \psi_n \rangle  |^2 \\
	&\leq 
	M_{\rm b}  \left( \Gamma_1^2\sum_{n = N}^\infty 
	|\langle b, \psi_n \rangle  |^2 + \Gamma_2^2
	\sum_{n = N}^\infty 
	\frac{|\langle b, \psi_n \rangle  |^2 }{|\lambda_n|^2}
	\right) \\
	&<   \frac{\epsilon^2}{4}.
	\end{align*}
	Therefore,
	\[
	\left\|
	\sum_{n = N}^\infty \frac{\langle b , \psi_n \rangle}{\lambda - \lambda_n} \phi_n 
	\right\| \leq 
	\left\|
	\sum_{n = N}^\infty \frac{\langle b , \psi_n \rangle}{\lambda - \lambda_n} \phi_n +
	\sum_{n = N}^\infty \frac{\langle b , \psi_n \rangle}{\lambda_n} \phi_n 
	\right\| + 
	\left\|
	\sum_{n = N}^\infty \frac{\langle b , \psi_n \rangle}{\lambda_n} \phi_n 
	\right\| < \epsilon
	\]
	for every $N \geq N_2$.
	Thus, 
	\eqref{eq:continuous_large_case} holds with $N_0^{\rm c} := N_2$.

	{\em Step 2:}
	Recall that $(z I - T(t))^{-1}S(t)$
	can be represented in the form \eqref{eq:Resol_S_tau}.
	We shall  show that 
	for every $\epsilon >0$, there exists $N_0^{\rm d} \in \mathbb{N}$
	such that 
	\begin{equation}
	\label{eq:discrete_large_case}
	\sup_{z \in \overline{\mathbb{E}_1} \setminus \{1 \}} 
	\left\|
	\sum_{n = N}^\infty 
	\frac{1 - e^{\tau \lambda_n}}{z - e^{\tau \lambda_n}} \cdot 
	\frac{\langle b , \psi_n \rangle}{\lambda_n} \phi_n
	\right\| \leq  \epsilon\qquad \forall \tau >0 ,~\forall N \geq N_0^{\rm d}.
	\end{equation}
	
	Let $z \in \overline{\mathbb{E}_1} \setminus \{1 \}$ and $\tau >0$.
	As in Step 1, we choose $N_1 \in \mathbb{N}$ so that 
	\eqref{eq:lambda_cond_large} holds.
	The following inequality is useful to obtain the estimate \eqref{eq:discrete_large_case}:
	\begin{equation}
	\label{eq:dis_time_bound}
	\left|
	\frac{1 - e^{\tau \lambda_n}}{z - e^{\tau \lambda_n}}
	\right|
	\leq \frac{|1 - e^{\tau \lambda_n}|}{1 - e^{\tau \re \lambda_n}} = 
	\frac{\frac{|1 - e^{\tau \lambda_n}|}{\tau  |\lambda_n|}}{ \frac{1 - e^{\tau \re \lambda_n}}{\tau |\re \lambda_n|}} \cdot
	\frac{|\lambda_n|}{|\re \lambda_n|}\qquad \forall n \geq N_1.
	\end{equation}

	Let $n \geq N_1$.
	Recalling that \eqref{eq:lambda_cond_large} holds,
	we first consider the case 
	$\lambda_n \in  \mathbb{C} \setminus \mathbb{C}_{-\alpha}$, i.e., 
	$\re \lambda_n \leq -\alpha$.
	Suppose that $-1 \leq \tau \re  \lambda_n \leq 0$. 
		The function
	\[
	g(\lambda) :=
	\begin{cases}
	\frac{1 - e^{\lambda}}{\lambda} & \text{if $\lambda \not=0$} \\
	-1 & \text{if $\lambda = 0$}
	\end{cases}
	\]
	is holomorphic on $\mathbb{C}$. 
	Hence, on a compact set $\{
	\lambda \in \mathbb{C}: -1 \leq \re \lambda \leq 0,~|\im \lambda | \leq \pi 
	\}$, there exists $M_1 >0$ such that $|g(\lambda)| \leq M_1$.
	For every $\lambda \in \mathbb{C}$ with  $|\im \lambda | \leq \pi$, 
	\[
	|g(\lambda \pm 2\ell \pi i)| = \left|\frac{1 - e^{\lambda}}{\re \lambda + i(\im \lambda \pm 2\ell \pi )}\right|
	\leq |g(\lambda)|\qquad \forall \ell \in \mathbb{N}.
	\]
	Therefore, $|g(\lambda)| \leq M_1$ if $-1 \leq \re \lambda \leq 0$.
	This estimate on $g$ shows that
	\begin{equation}
	\label{eq:dis_bount1}
	\frac{|1 - e^{\tau \lambda_n}|}{\tau |\lambda_n|} \leq M_1.
	\end{equation}
	Moreover, by the mean value theorem,
	\begin{equation}
	\label{eq:dis_bount2}
	\frac{1 - e^{\tau \re \lambda_n}}{\tau |\re \lambda_n|} \geq e^{-1}.
	\end{equation}
	It follows from \eqref{eq:dis_time_bound}--\eqref{eq:dis_bount2} that
	\[
	\left|
	\frac{1 - e^{\tau \lambda_n}}{z - e^{\tau \lambda_n}}
	\right| \cdot \left|
	\frac{1}{\lambda_n}
	\right| \leq \frac{e M_1 }{|\re \lambda_n |} \leq \frac{e M_1 }{\alpha}.
	\]
	Suppose that $\tau \re \lambda_n < -1$. Then
	\eqref{eq:dis_time_bound} leads to
	\begin{equation}
	\label{eq:1e_ze_bound}
	\left|
	\frac{1 - e^{\tau \lambda_n}}{z - e^{\tau \lambda_n}}
	\right|  \leq 
	\frac{|1 - e^{\tau \lambda_n}|}{1 - e^{\tau \re \lambda_n}} 
 < \frac{2}{1 - e^{-1}}.
	\end{equation}
	Thus, if $\re \lambda_n \leq -\alpha$, then
	\begin{equation}
	\label{eq:discrete_case_1}
	\left|
	\frac{1 - e^{\tau \lambda_n}}{z - e^{\tau \lambda_n}}
	\right|\cdot \left|
	\frac{1}{\lambda_n}
	\right| \leq 
	\max \left\{
	\frac{e M_1 }{\alpha},\frac{2}{(1 - e^{-1})\alpha}
	\right\} =: \Upsilon_1.
	\end{equation}
	
	Next we consider the case $\lambda_n \in 
	\mathbb{C} \setminus 
	\Sigma_{\pi/2+\delta}$, i.e., $\re \lambda_n \leq 0$ and
	\[
	|\lambda_n| \leq \frac{|\re \lambda_n|}{\sin \delta}.
	\]
	If $-1 \leq \tau \re  \lambda_n \leq 0$, then
	\eqref{eq:dis_time_bound}--\eqref{eq:dis_bount2} yield
	\[
	\left|
	\frac{1 - e^{\tau \lambda_n}}{z - e^{\tau \lambda_n}}
	\right| \leq \frac{eM_1}{\sin \delta}.
	\]
	If $\tau \re \lambda_n < -1$, then
	\eqref{eq:1e_ze_bound} holds.
	Thus, in the case $\lambda_n \in 
	\mathbb{C} \setminus 
	\Sigma_{\pi/2+\delta}$,
	we obtain
	\begin{equation}
	\label{eq:discrete_case_2}
	\left|
	\frac{1 - e^{\tau \lambda_n}}{z - e^{\tau \lambda_n}}
	\right| \leq 
	\max \left\{
	\frac{eM_1}{\sin \delta}, \frac{2}{1 - e^{-1}}
	\right\} =: \Upsilon_2.
	\end{equation}
	
	By the estimates \eqref{eq:discrete_case_1} and \eqref{eq:discrete_case_2}, 
	for every $N \geq N_1$,
	\begin{align}
	\left\|
	\sum_{n = N}^\infty 
	\frac{1 - e^{\tau \lambda_n}}{z - e^{\tau \lambda_n}}
	\cdot
	\frac{\langle b , \psi_n \rangle}{\lambda_n} \phi_n
	\right\|^2 
	&\leq M_{\rm b}
	\sum_{n = N}^\infty 
	\left|
	\frac{1 - e^{\tau \lambda_n}}{z - e^{\tau \lambda_n}}
	\cdot 
	\frac{\langle b , \psi_n \rangle}{\lambda_n}
	\right|^2 \notag \\  
	&\leq M_{\rm b} \left(
	\Upsilon_1^2\sum_{n = N}^\infty  \left|
	\langle b , \psi_n \rangle
	\right|^2  + \Upsilon_2^2 \sum_{n = N}^\infty \left|
	\frac{\langle b , \psi_n \rangle}{\lambda_n}
	\right|^2 
	\right). \label{eq:discrete_time_transfer_suff_large}
	\end{align}
	Similarly to Step 1,
	it follows 
	from \eqref{eq:b_bounded}
	that for every $\epsilon >0$,
	there exists $N_0^{\rm d}  \geq N_1$ such that 
	\eqref{eq:discrete_large_case} holds.
	
	{\em Step 3:}
	By \eqref{eq:Resol_S_tau},
	\[
	1 - F\big(z I - T(\tau)\big)^{-1}S(\tau) = 
	1 + \sum^{\infty}_{n=1}\frac{1-e^{\tau \lambda_n} }{z - e^{\tau \lambda_n}} 
	\cdot 
	\frac{\langle b , \psi_n \rangle \langle \phi_n, f \rangle}{\lambda_n}
	\]
	for all $z \in \rho(T(\tau))$ and $\tau >0$.
	Assume that $\epsilon_{\rm c} \in (0,1)$ satisfies \eqref{eq:continuous_lower_bound}, and
	choose $\epsilon \in (0,\epsilon_{\rm c}/3)$ arbitrarily.
	By Steps~1 and 2,
	there exists $N_0 \in \mathbb{N}$ such that 
	for every $N \geq N_0$ and $\tau >0$,
	\begin{subequations}
		\begin{align}
		\sup_{\lambda \in \overline{\mathbb{C}_0} \setminus \{0 \}} 
		&\left|
		\sum_{n=N}^{\infty} \frac{\langle b , \psi_n \rangle \langle \phi_n, f \rangle}{\lambda - \lambda_n} 
		\right| \leq \epsilon \label{eq:suff_large_cont}\\
		\sup_{z \in \overline{\mathbb{E}_1} \setminus \{1 \}} 
		&\left|
		\sum^{\infty}_{n=N}\frac{1-e^{\tau \lambda_n}}{z - e^{\tau \lambda_n}}
		\cdot  \frac{\langle b , \psi_n \rangle \langle \phi_n, f \rangle}{\lambda_n}
		\right| \leq \epsilon. \label{eq:suff_large_dist}
		\end{align}
	\end{subequations}

	Let
	$N_1 \in \mathbb{N}$ satisfy \eqref{eq:lambda_cond_large}, and
	take $N \geq \max\{N_0,N_1\}$.
	We  investigate the finite-dimensional
	truncation:
	\[
	\sum^{N-1}_{n=1}\frac{1-e^{\tau \lambda_n}}{z - e^{\tau \lambda_n}}
	\cdot  \frac{\langle b , \psi_n \rangle \langle \phi_n, f \rangle}{\lambda_n}.
	\]
	This finite sum has no difficulty arising from strong stability, i.e., 
	$0 \in \sigma(A) \setminus \sigma_p(A)$. Hence
	we can apply the result on exponential stability developed in
	the proof of Theorem~2.1 of
	\cite{Rebarber2006}.
	
	For $\tau, \eta, a>0$, define   
	the sets $\Omega_0$, $\Omega_1$, $\Omega_2$, and
	$\Omega_3$ by
	\begin{align*}
	\Omega_0 &:= 
	\{z = e^{\tau \lambda }: \re \lambda \geq 0,~|\tau \lambda | < \eta\} 
	 \\
	\Omega_1 &:= 
	\{z = e^{\tau \lambda }: |\lambda - \lambda_n| \geq a ~\text{for all 
		$ 1\leq n \leq N-1$}\} \\
	&\qquad \cup
	\{z = e^{\tau \lambda }: 0< |\lambda - \lambda_n| < a,~
	\langle b , \psi_n \rangle \langle \phi_n, f \rangle = 0
	~\text{for some $ 1\leq n \leq N-1$} \} \\
	\Omega_2 &:= \{z = e^{\tau \lambda }: 0< |\lambda - \lambda_n| < a,~
	\langle b , \psi_n \rangle \langle \phi_n, f \rangle \not= 0
	~\text{for some $ 1\leq n \leq N-1$} \} \\
	\Omega_3 &:=\overline{\mathbb{E}_1} \setminus 
	\Omega_0.
	\end{align*}
	If $0< \eta < \pi$, then for every $z \in \Omega_0$,
	there uniquely exists $\lambda \in\overline{\mathbb{C}_0}$ such that  
	$z = e^{\tau \lambda}$  and $|\tau \lambda| < \eta$.
	This $\lambda$ is the complex variable in the continuous-time setting
	corresponding to the complex variable $z$ in the discrete-time setting.

	Define 
	$
	a^* := \min \{|\lambda_n - \lambda_m|/2: 1\leq n<m \leq N-1\}.
	$
	If $|\lambda - \lambda_n| < a^*$ for some $1 \leq n\leq N-1$, then
	$|\lambda - \lambda_m| \geq a^*$ for $1\leq m \leq N-1$ with $m\not=n$.
	By Steps 3) and 4) of the proof of Theorem~2.1 in \cite{Rebarber2006},
	there exist
	$\tau^* >0$, $\eta \in (0,\pi)$, and 
	$a \in (0,a^*)$ 
	such that
	the following three statements hold
	for every $\tau \in (0,\tau^*)$:
	\begin{enumerate}
		\def\theenumi{\roman{enumi}}
		\def\labelenumi{\theenumi}
		\renewcommand{\labelenumi}{(\roman{enumi})}
		\item  \label{it:Omega_4}
		for all $z \in \Omega_4 := \Omega_0 \cap \Omega_1$ and
		the corresponding $\lambda$,
		\begin{equation}
		\label{eq:finite_case1}
		\left|
		\sum_{n=1}^{N-1} \frac{\langle b , \psi_n \rangle \langle \phi_n, f \rangle}{\lambda - \lambda_n}  +
		\sum^{N-1}_{n=1}\frac{1-e^{\tau \lambda_n}}{z - e^{\tau \lambda_n}}
		\cdot  \frac{\langle b , \psi_n \rangle \langle \phi_n, f \rangle}{\lambda_n} 
		\right| < \epsilon;
		\end{equation}
		\item \label{it:Omega_3}
		$e^{\tau \lambda_n} \in \mathbb{C} \setminus \Omega_3$
		for all $1 \leq n \leq N-1$; and 
		\item \label{it:Omega_5}
		for all $z \in \Omega_5 := (\Omega_0 \cap \Omega_2) \cup
		\Omega_3$,
		\begin{equation}
		\label{eq:finite_case2}
		\left|
		1 + \sum^{N-1}_{n=1}\frac{1-e^{\tau \lambda_n}}{z - e^{\tau \lambda_n}}
		\cdot  \frac{\langle b , \psi_n \rangle \langle \phi_n, f \rangle}{\lambda_n} 
		\right|> \epsilon_{\rm c}.
		\end{equation}
	\end{enumerate}
	\def\theenumi{\alph{enumi})}
	\def\labelenumi{\theenumi}
	\def\theenumii{(\roman{enumii})}
	\def\labelenumii{\theenumii}
	In what follows, we set $\tau,\eta, a>0$ so that
	the above statements (\ref{it:Omega_4})--(\ref{it:Omega_5}) hold.

	Suppose that $z \in \Omega_4 \setminus \{1\}$, and let
	$\lambda \in \overline{\mathbb{C}_0} \setminus \{0\}$ be
	the corresponding 
	complex variable in the continuous-time setting.
	Since
	\[
	|1 -  F(\lambda I - A)^{-1}B | =
	\left|
	1 - \sum_{n=1}^{\infty} \frac{\langle b , \psi_n \rangle \langle \phi_n, f \rangle}{\lambda - \lambda_n} 
	\right| > \epsilon_{\rm c},
	\]
	it follows from the estimates
	\eqref{eq:suff_large_cont}, \eqref{eq:suff_large_dist}, and
	\eqref{eq:finite_case1} that 
	\begin{align*}
	\left| 
	1 + \sum_{n=1}^\infty \frac{1-e^{\tau \lambda_n}}{z - e^{\tau \lambda_n}}
	\cdot  \frac{\langle b , \psi_n \rangle \langle \phi_n, f \rangle}{\lambda_n} 
	\right| &> 
	\epsilon_{\rm c} - 3\epsilon.
	\end{align*}
	On the other hand, if $z \in \Omega_5 \setminus \{1\}$, then \eqref{eq:suff_large_dist}
	and 
	\eqref{eq:finite_case2} yield
	\begin{align*}
	\left| 
	1 + \sum_{n=1}^\infty \frac{1-e^{\tau \lambda_n}}{z - e^{\tau \lambda_n}}
	\cdot  \frac{\langle b , \psi_n \rangle \langle \phi_n, f \rangle}{\lambda_n} 
	\right| 
	&> \epsilon_{\rm c}-\epsilon.
	\end{align*}

{\em Step 4:}
It remains to show that
\begin{equation}
\label{eq:Omega45}
(\Omega_4 \setminus \{1 \}) \cup (\Omega_5 \setminus \{1 \})
= 
\rho\big (T(\tau)\big) \cap \overline{\mathbb{E}_1}.
\end{equation}	
	By definition,
	\[
	(\Omega_0 \cap \Omega_1)  \cup 
	(\Omega_0 \cap \Omega_2) = 
	\Omega_0 \cap (\Omega_1 \cup \Omega_2) = 
	\Omega_0 \setminus \{e^{\tau \lambda_n}:  1\leq n \leq N-1  \}.
	\]
	Moreover, the statement (\ref{it:Omega_3}) above yields
	\[
	\Omega_3 \cap \{e^{\tau\lambda_n} : 1 \leq n \leq N-1 \} = \emptyset.
	\]
	Since $N \geq N_1$, it follows from \eqref{eq:lambda_cond_large} that 
	$ \overline{\mathbb{E}_1} \cap 
	\{
	e^{\tau \lambda_n}:  n \geq N 
	\} = \emptyset$.
	Hence
	\begin{align*}
	(\Omega_4 \setminus \{1 \}) \cup (\Omega_5 \setminus \{1 \})  &=
	(\Omega_4  \cup \Omega_5) \setminus \{1 \} \\
	&=
	\big((\Omega_0 \setminus \{e^{\tau \lambda_n}:  1\leq n \leq N - 1  \} )
	\cup \Omega_3\big) \setminus \{1 \} \\
	&= (
	\Omega_0 \cup \Omega_3
	) 
	\setminus ( \{1 \} \cup
	\{e^{\tau\lambda_n} : 1 \leq n \leq N-1 \} )
	\\
	&=
	\overline{\mathbb{E}_1}  \setminus 
	( \{1 \} \cup
	\{e^{\tau\lambda_n} : n \in \mathbb{N} \} ) .
	\end{align*}
	Since  
	$\sigma(T(\tau)) = \overline{\{e^{\tau \lambda_n}:
		n \in \mathbb{N} \}}$ by Lemma~\ref{lem:T_spectrum},
	we obtain
	\[
	\overline{\mathbb{E}_1}  \setminus 
	( \{1 \} \cup
	\{e^{\tau\lambda_n} : n \in \mathbb{N} \} ) =
	\overline{\mathbb{E}_1} \setminus \sigma \big(
	T(\tau)
	\big).
	\]
	Thus, \eqref{eq:Omega45} holds. This completes the proof.
\end{proof}

We are now in the position to prove Theorem~\ref{thm:T_SF_resolvent_set}.
\begin{proof}[Proof of Theorem~\ref{thm:T_SF_resolvent_set}]
	By Lemmas~\ref{lem:continuous_time_trans_func_bound}
	and \ref{lem:discrete_time_trans_func_bound},
	there exists $\tau^*>0$ such that 
	\begin{equation}
	\label{eq:one_FRS_1}
	1 \in  \rho\big(FR\big(z,T(\tau)\big)S(\tau)\big)
	\qquad 
	\forall z \in \rho\big(T(\tau) \big) \cap \overline{\mathbb{E}_1},~\forall \tau \in (0,\tau^*).
	\end{equation}
	Let $\tau \in (0,\tau^*)$.
	By (A\ref{assump:finite_unstable})--(A\ref{assump:origin}) and
	Lemma~\ref{lem:T_spectrum},
	$\sigma (T(\tau)) \cap \mathbb{T} = \{1 \}$. This and \eqref{eq:one_FRS_1} imply that 
	\begin{equation}
	\label{eq:one_FRS_2}
	\mathbb{T} \setminus\{1 \} \subset \rho\big(T(\tau) \big) \cap \overline{\mathbb{E}_1},\qquad
	1 \in  \rho\big(FR\big(z,T(\tau)\big)S(\tau)\big)
	\quad 
	\forall z \in \mathbb{T} \setminus\{1 \} .
	\end{equation}
	On the other hand, 
	\[
	zI - \Delta(\tau) =
	\big(zI - T(\tau)\big) 
	\big(I - \big(zI-T(\tau)\big)^{-1} S(\tau)F\big)\qquad \forall z \in  \rho\big(T(\tau) \big).
	\]
	Since
	$\sigma\big(R(z, T(\tau)) S(\tau)F\big) \setminus \{0 \} = 
	\sigma\big(F R(z, T(\tau))S(\tau)\big) \setminus \{0 \} $
	by Lemma~\ref{lem:bounded_operator_spectrum}, it follows that
	for every $z \in \rho(T(\tau))$,
	\begin{equation}
	\label{eq:resolvent_equivalence}
	1 \in \rho \big(F R\big(z, T(\tau)\big)S(\tau)\big) \quad \Leftrightarrow \quad 
	z \in \rho \big(\Delta(\tau)\big).
	\end{equation}
	By \eqref{eq:one_FRS_2} and \eqref{eq:resolvent_equivalence}, we obtain
	$
	\mathbb{T} \setminus \{1\} \subset \rho(\Delta(\tau))
	$. Since $1 \not\in
	\sigma_p(\Delta(\tau))$  and $1 \in \sigma(\Delta(\tau))$ by
	Lemma~\ref{lem:spectal_cond}, it follows that
	$\sigma_p(\Delta(\tau)) \cap
	\mathbb{T}  = \emptyset $ and
	$\sigma(\Delta(\tau)) \cap
	\mathbb{T}  = \{1\}$.
\end{proof}

\section{Preservation of boundedness}
\label{sec:power_boundedness}
In this section, we  prove the power boundedness of 
$(\Delta(\tau)^k)_{k \in \mathbb{N}}$ in order to finish the proof of the 
main theorem.
\begin{theorem}
	\label{thm:power_boundedness}
	If Assumption~\ref{assum:for_MR} is satisfied, then
	there exists $\tau^* >0$ 
	such that 
	the discrete semigroup $(\Delta(\tau)^k)_{k \in \mathbb{N}}$
	is power bounded for every $\tau \in (0,\tau^*)$.
\end{theorem}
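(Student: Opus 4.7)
The plan is to reduce everything to the scalar rank-one structure $\Delta(\tau) = T(\tau) + S(\tau)F$ and to exploit the resolvent lower bound already established in Lemma~\ref{lem:discrete_time_trans_func_bound}. As a first step I would introduce the spectral projection associated with the finite set $\sigma_1 := \{\lambda_n:n\in\mathbb{N}\} \cap \mathbb{C}_{-\alpha} \cap \Sigma_{\pi/2+\delta}$, which is finite by (A\ref{assump:finite_unstable}), and split $X = X_1 \oplus X_2$ with $X_1$ finite-dimensional. Every eigenvalue of $A|_{X_2}$ satisfies $\re\lambda_n \le 0$ (being either $\le -\alpha$ or in the cone $|\arg\lambda_n|\ge \pi/2+\delta$), so Lemma~\ref{lem:RB_prop} b) immediately yields $\|T(\tau)^k x_2\|^2 \le (M_{\rm b}/M_{\rm a})\|x_2\|^2$ for every $x_2\in X_2$, $k\in\mathbb{N}$, $\tau>0$; the restricted open-loop discrete semigroup is thus power bounded uniformly in $\tau$.

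The core step is the rank-one Duhamel identity
\[
\Delta(\tau)^k = T(\tau)^k + \sum_{j=0}^{k-1} T(\tau)^{k-1-j} S(\tau)\, F\Delta(\tau)^j,
\]
which, on defining the scalar sequence $\eta_j(x) := F\Delta(\tau)^j x$, reduces everything to the discrete Volterra equation $\eta_k(x) = FT(\tau)^k x + \sum_{j=0}^{k-1} FT(\tau)^{k-1-j}S(\tau)\,\eta_j(x)$. Taking $z$-transforms formally gives $\hat\eta(z) = \hat q(z)/(1-z\hat p(z))$, and the denominator is exactly $1-FR(1/z,T(\tau))S(\tau)$, which by Lemma~\ref{lem:discrete_time_trans_func_bound} is bounded below on $\mathbb{T}\setminus\{1\}$ for sufficiently small $\tau>0$. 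Applying the Plancherel identity on $\mathbb{T}$ on the $X_2$-component (where $T(\tau)$ is power bounded) converts this into an $\ell^2$-estimate for $(\eta_j(x_2))_j$, which together with the Duhamel identity is upgraded to an $\ell^\infty$-bound on $\|\Delta(\tau)^k x_2\|$. The finite-dimensional $X_1$-component is handled separately by a direct matrix analysis: the block structure of $\Delta(\tau)$ combined with (A\ref{assump:closed_loop}) and Theorem~\ref{thm:T_SF_resolvent_set} ensures that the $X_1$-contribution to the iterates remains uniformly bounded for small $\tau$.

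The principal obstacle is the singularity at $z=1$. Since $1$ belongs to the continuous spectrum of $\Delta(\tau)$ by Lemma~\ref{lem:spectal_cond}, the lower bound of Lemma~\ref{lem:discrete_time_trans_func_bound} degenerates as $w\to 1$, so $1/(1-z\hat p(z))$ need not be bounded at $z=1$, and converting the frequency-domain behaviour into a time-domain $\ell^\infty$ bound on $(\eta_j)$ is delicate. Overcoming this will require precisely the same ingredients that were used to prove Lemma~\ref{lem:continuous_time_trans_func_bound}---the sectorial avoidance of the imaginary axis in (A\ref{assump:finite_unstable}), the regularity $b\in D(A^{-1})$ in (A\ref{assump:b_in_invA}), the non-degeneracy condition (A\ref{assump:b_F_cond}), and the imaginary-axis resolvent bound \eqref{eq:imaginary_axis_RS_case} of (A\ref{assump:closed_loop})---combined with a Hilbert space Parseval argument on $\mathbb{T}$, to show that the potentially singular frequency behaviour near $z=1$ still produces a uniformly bounded time-domain iterate sequence.
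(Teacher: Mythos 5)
Your overall architecture --- isolate the finite-dimensional unstable part, treat $S(\tau)F$ as a rank-one perturbation, and feed the uniform lower bound on $1-FR(z,T(\tau))S(\tau)$ from Lemma~\ref{lem:discrete_time_trans_func_bound} into a resolvent/transfer-function identity --- matches the paper's, and your discrete Duhamel/Volterra identity is just the time-domain form of the Sherman--Morrison--Woodbury formula (Proposition~\ref{Prop:SMW}) that the paper uses. But the central analytic step is missing. Your plan hinges on applying ``the Plancherel identity on $\mathbb{T}$'' to obtain an $\ell^2$-estimate for $(\eta_j(x_2))_j$. For a merely power-bounded (indeed strongly stable but non-exponentially stable) $T(\tau)|_{X_2}$, the sequence $(FT(\tau)^kx_2)_k$ is bounded but in general \emph{not} square-summable, and the boundary values of $\hat\eta$ on $\mathbb{T}$ need not exist in $L^2$; so the unweighted Parseval argument cannot deliver $(\eta_j)\in\ell^2$, and the Cauchy--Schwarz upgrade to an $\ell^\infty$ bound on $\|\Delta(\tau)^kx_2\|$ collapses with it. What is true, and what the paper uses, is the \emph{weighted} resolvent characterization of power boundedness on Hilbert spaces (Theorem~\ref{thm:power_bounded}), i.e.\ finiteness of $\limsup_{r\downarrow1}(r-1)\int_0^{2\pi}\bigl(\|R(re^{i\theta},\Delta(\tau))x\|^2+\|R(re^{i\theta},\Delta(\tau))^*y\|^2\bigr)d\theta$. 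You never invoke this or any substitute, and you explicitly defer the difficulty near $z=1$ to ``precisely the same ingredients'' without carrying it out --- that deferred step \emph{is} the proof. (Incidentally, the obstacle is not that the lower bound of Lemma~\ref{lem:discrete_time_trans_func_bound} degenerates as $z\to1$; it is uniform on $\rho(T(\tau))\cap\overline{\mathbb{E}_1}$. The problem is that $R(z,T(\tau))x$ itself blows up there because $1\in\sigma(T(\tau))$, and the weighted $L^2$ integrability of this blow-up is exactly what power boundedness of $T^-(\tau)$ supplies via Theorem~\ref{thm:power_bounded}, applied in both the forward and adjoint directions.)

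Two further gaps. First, before any resolvent characterization can be used one needs $\mathbb{E}_1\subset\rho(\Delta(\tau))$; your ``direct matrix analysis'' of the $X_1$-block does not give this, because $\Delta(\tau)$ is not block-triangular with respect to $X_1\oplus X_2$ (the range of $S(\tau)$ meets both components) and because Theorem~\ref{thm:T_SF_resolvent_set} only controls $\sigma(\Delta(\tau))\cap\mathbb{T}$, not the spectrum in $\mathbb{E}_1$. The paper needs a separate, nontrivial argument here (Lemma~\ref{lem:resol_T_SF}): controllability and observability of the sampled unstable part under a non-pathological-sampling condition, plus a coprime-factorization estimate. Second, for the same non-invariance reason the scalar Volterra equation for $\eta_j(x_2)=F\Delta(\tau)^jx_2$ does not decouple from the unstable component, so the convolution kernel $FT(\tau)^mS(\tau)$ contains exponentially growing contributions from $\sigma(A)\cap\mathbb{C}_0$ and lies in no $\ell^p$; the spectral decomposition must be applied to the resolvent of the \emph{unperturbed} $T(\tau)$ inside the Woodbury identity, as the paper does, rather than to the iterates of $\Delta(\tau)$.
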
	
\begin{remark}
	\label{rem:discrete_cont_spec}
	{\em 
	Combining Theorems \ref{thm:T_SF_resolvent_set} and 
	\ref{thm:power_boundedness} with
	the discrete version of the mean ergodic theorem 
	(see, e.g., Theorem~I.2.9 and 
	Corollary~I.2.11 in \cite{Eisner2010}),
	we obtain 
	\[
	X = \ker (\Delta(\tau) - 1) \oplus \overline{\ran (\Delta(\tau) - 1)}=
	\overline{\ran (\Delta(\tau) - 1)}
	\]
	for all sufficiently small $\tau>0$. Therefore, $1$ belongs to
	the continuous spectrum of $\Delta(\tau)$.
}
\end{remark}

Before proving Theorem~\ref{thm:power_boundedness}, 
we apply a spectral decomposition for $A$;
see, e.g.,
Lemma 2.4.7 of \cite{Curtain2020} or Proposition IV.1.16 in \cite{Engel2000}.
Assume that 
(A\ref{assump:finite_unstable}) is satisfied. Then
only finite elements of $\{\lambda_n :n \in \mathbb{N}\}$ are in
$\mathbb{C}_{-\alpha} \cap \Sigma_{\pi/2+\delta}$.
For every $\beta >0$,
there exists  a smooth, positively oriented, and simple closed curve $\Phi$ in 
$\rho(A)$ containing $\sigma(A) \cap \overline{\mathbb{C}_\beta}$ in its interior and 
$\sigma(A) \cap (\mathbb{C} \setminus \overline{\mathbb{C}_\beta})$ in its exterior. 
Here we choose $\beta>0$ so that $\sigma(A) \cap \overline{\mathbb{C}_\beta} = 
\{\lambda_n:n \in \mathbb{N} \} \cap \mathbb{C}_0$.
The operator
\begin{equation}
\label{eq:projection}
\Pi := \frac{1}{2\pi i} \int_{\Phi} (\lambda I - A)^{-1} d\lambda
\end{equation}
is a projection on $X$. We have
\[
X = X^+ \oplus X^-,
\]
where $X^+ := \Pi X$ and $X^- := (I - \Pi) X$.
Then $\dim X^+ < \infty$, $X^+$ and $X^-$ are 
$T(t)$-invariant for all $t \geq 0$, and
\[
\sigma(A^+) = \sigma(A) \cap \overline{\mathbb{C}_\beta},\quad
\sigma(A^-) = \sigma(A) \cap (\mathbb{C} \setminus \overline{\mathbb{C}_\beta}),
\]
where $A^+ := A|_{X^+}$ and $A^- := A|_{D(A) \cap X^-}$.
For $t \geq 0$, we define
\[
T^+(t) := T(t)|_{X^+},\quad T^-(t) := T(t)|_{X^-}.
\]
Then $(T^+(t))_{t\geq 0}$ 
and $(T^-(t))_{t\geq 0}$ are strongly continuous semigroups 
on $X^+$ and $X^-$, respectively, and their
generators  are given by $A^+$ and $A^-$,
respectively.
Let 
\begin{equation}
\label{eq:Ns_def}
\{\lambda_n:1\leq n \leq N_{\rm s} - 1 \}  := \{\lambda_n 
: n \in \mathbb{N}\} \cap \mathbb{C}_0
= \sigma(A^+)
\end{equation}
 by changing the order of $\{\lambda_n 
: n \in \mathbb{N}\}$ if necessary.

By construction, $(T^-(t))_{t\geq 0}$ is uniformly bounded.
Moreover, Lemma~4.2.7 of \cite{Curtain2020} shows that 
$(T^-(t))_{t\geq 0}$ is strongly stable under (A\ref{assump:imaginary}).
Hence we easily obtain the following properties of
the discrete semigroup $(T^{-}(\tau)^k)_{k \in\mathbb{N}}$.
\begin{lemma}
	\label{lem:T_minus_strong_stable}
	Let $\tau > 0$.
	The discrete semigroup $(T^{-}(\tau)^k)_{k \in\mathbb{N}}$
	constructed as above
	is power bounded.
	If {\em (A\ref{assump:imaginary})} 
	additionally holds, then 
	$(T^{-}(\tau)^k)_{k \in\mathbb{N}}$
	is strongly stable.
\end{lemma}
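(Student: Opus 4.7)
\medskip
\noindent\emph{Proof plan.}
The plan is to work directly from the series representation of $T^{-}(\tau)$ given just above the lemma. By the semigroup property $T^{-}(\tau)^{k}=T^{-}(k\tau)$, so for every $x^{-}\in X^{-}$,
\[
T^{-}(\tau)^{k}x^{-}=\sum_{n=N_{\rm s}}^{\infty}e^{k\tau\lambda_{n}}\langle x^{-},\psi_{n}\rangle\phi_{n}.
\]
The first thing I would check is the basic sign fact underlying the spectral decomposition: by (A\ref{assump:finite_unstable}) only finitely many eigenvalues lie in $\mathbb{C}_{0}$, and $\beta>0$ was chosen so that the projection $\Pi$ captures exactly those, hence $\operatorname{Re}\lambda_{n}\le 0$ for every $n\ge N_{\rm s}$. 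This gives the uniform bound $|e^{k\tau\lambda_{n}}|=e^{k\tau\operatorname{Re}\lambda_{n}}\le 1$ for all $k\in\mathbb{N}$ and all $n\ge N_{\rm s}$.

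For power boundedness, I would apply the upper Riesz-basis estimate from \eqref{eq:RB_constant} and Lemma~\ref{lem:RB_prop}~b):
\[
\|T^{-}(\tau)^{k}x^{-}\|^{2}\le M_{\rm b}\sum_{n=N_{\rm s}}^{\infty}|e^{k\tau\lambda_{n}}|^{2}\,|\langle x^{-},\psi_{n}\rangle|^{2}\le \frac{M_{\rm b}}{M_{\rm a}}\|x^{-}\|^{2},
\]
which yields $\sup_{k}\|T^{-}(\tau)^{k}\|\le\sqrt{M_{\rm b}/M_{\rm a}}$ uniformly (and, incidentally, uniformly in $\tau\ge 0$).

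For strong stability I would argue by an $\varepsilon/2$ truncation, assuming $\tau>0$ (otherwise the discrete semigroup is just the identity). Fix $x^{-}\in X^{-}$ and $\varepsilon>0$. Since $\sum_{n\ge N_{\rm s}}|\langle x^{-},\psi_{n}\rangle|^{2}$ is finite by Lemma~\ref{lem:RB_prop}~b), choose $M\ge N_{\rm s}$ with $M_{\rm b}\sum_{n\ge M}|\langle x^{-},\psi_{n}\rangle|^{2}<\varepsilon/2$. Using $|e^{k\tau\lambda_{n}}|\le 1$ on the tail,
\[
\|T^{-}(\tau)^{k}x^{-}\|^{2}\le M_{\rm b}\sum_{n=N_{\rm s}}^{M-1}e^{2k\tau\operatorname{Re}\lambda_{n}}|\langle x^{-},\psi_{n}\rangle|^{2}+\frac{\varepsilon}{2}.
\]
Now (A\ref{assump:imaginary}) rules out $\operatorname{Re}\lambda_{n}=0$, which, combined with $\operatorname{Re}\lambda_{n}\le 0$, gives $\operatorname{Re}\lambda_{n}<0$ for every $n\ge N_{\rm s}$. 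Since the remaining sum is finite, each exponential decays to $0$ as $k\to\infty$, so for $k$ large the first term is below $\varepsilon/2$, yielding $\|T^{-}(\tau)^{k}x^{-}\|^{2}<\varepsilon$.

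The only genuine subtlety is uniform control of the infinite tail; this is what forces us to use (A\ref{assump:finite_unstable}) (to get $\operatorname{Re}\lambda_{n}\le 0$ eventually, so that the tail is dominated by the absolutely convergent sum of $|\langle x^{-},\psi_{n}\rangle|^{2}$). The role of (A\ref{assump:imaginary}) and (A\ref{assump:origin}) is simply to ensure that the finitely many remaining exponentials are strict contractions in each coordinate so that the truncation goes through; note that (A\ref{assump:origin}) is not actually needed here, since strong stability of $T^{-}(\tau)^{k}$ holds even when $0$ is not an accumulation point of $\{\lambda_{n}\}$.
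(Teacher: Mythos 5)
Your power-boundedness argument is the same as the paper's: both use $\re\lambda_n\le 0$ for $n\ge N_{\rm s}$ together with the Riesz-basis inequalities to get $\|T^-(\tau)^k\|\le\sqrt{M_{\rm b}/M_{\rm a}}$. For strong stability, however, you take a genuinely different and more elementary route. The paper computes $\sigma_p(T^-(\tau))=\{e^{\tau\lambda_n}:n\ge N_{\rm s}\}$ and $\sigma(T^-(\tau))=\overline{\{e^{\tau\lambda_n}:n\ge N_{\rm s}\}}$ via Lemma~\ref{lem:T_spectrum}, checks that the point spectrum misses $\mathbb{T}$ and that $\sigma(T^-(\tau))\cap\mathbb{T}=\{1\}$, and then invokes the discrete Arendt--Batty--Lyubich--V\~u theorem (Theorem~\ref{thm:ABLV_disc}). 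You instead prove strong stability directly by an $\varepsilon/2$ truncation: the tail is controlled uniformly in $k$ by $|e^{k\tau\lambda_n}|\le 1$ and square-summability of $\langle x^-,\psi_n\rangle$, and the finitely many head terms decay because (A\ref{assump:imaginary}) forces $\re\lambda_n<0$ there. This is correct and self-contained, avoids the heavy ABLV machinery for this particular lemma, and makes transparent your (accurate) observation that (A\ref{assump:origin}) is not actually needed for the conclusion; the paper uses it only to pin down $\sigma(T^-(\tau))\cap\mathbb{T}$ exactly, while ABLV only needs countability. You are also right to exclude $\tau=0$ from the strong-stability claim (where $T^-(0)=I$ fails to be strongly stable); the paper states the lemma for $\tau\ge 0$ but only ever applies the stability half with $\tau>0$. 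What the paper's route buys in exchange is consistency with its overall theme---verifying the ABLV conditions everywhere---and a slightly shorter write-up given that Theorem~\ref{thm:ABLV_disc} is already on the table.
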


\begin{remark}
	\label{rem:continuous_spectrum}
	{\em
	Applying the mean ergodic theorem (see, e.g., Theorem~I.2.25 of \cite{Eisner2010}) to 
	the uniformly bounded semigroup $(T^-(t))_{t\geq 0}$,
	we obtain
	\[
	X^- = \ker (A^-) \oplus \overline{\ran (A^-)} = \overline{\ran (A^-)}.
	\]
	Since the finite-dimensional unstable part $A^+$ is invertible, it follows that $X^+ = \ran (A^+)$. Thus, 
	$X = \overline{\ran (A)}$, which implies that $0$ belongs to the continuous spectrum of $A$.
	}
\end{remark}

A characterization of uniformly 
bounded semigroups on Hilbert spaces has been
obtained in \cite{Gomilko1999, Shi2000}.
The following theorem is 
its discrete analogue and
provides a necessary and sufficient condition for
a discrete semigroup on a Hilbert space to be power bounded.
\begin{theorem}[Theorem~II.1.12 of \cite{Eisner2010}]
	\label{thm:power_bounded}
	Let $X$ be a Hilbert space and  $\Delta \in \mathcal{L}(X)$ satisfy $\mathbb{E}_1 \subset \rho(\Delta)$.	
	The discrete semigroup $(\Delta^k)_{k \in \mathbb{N}}$ is 
	power bounded if and only if for every $x,y \in X$,
	\[
	\limsup_{r \downarrow 1} ~(r-1)
	\int^{2\pi}_0 \big(
	\|R(re^{i\theta}, \Delta)x\|^2 + \|R(re^{i\theta},\Delta)^* y\|^2 
	\big) d\theta < \infty.
	\]
\end{theorem}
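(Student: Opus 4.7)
The plan is to verify Eisner's resolvent characterization of power boundedness (Theorem~\ref{thm:power_bounded}) for $\Delta(\tau)$. Write $s(\tau) := \int_0^\tau T(\sigma) b\, d\sigma$, so that $\Delta(\tau) = T(\tau) + s(\tau)\otimes f$ with $(u\otimes v)x := \langle x, v\rangle u$ is a rank-one perturbation of $T(\tau)$. Theorem~\ref{thm:T_SF_resolvent_set} furnishes $\tau^*>0$ such that $\mathbb{E}_1 \subset \rho(\Delta(\tau))$ for every $\tau\in(0,\tau^*)$, and the Sherman--Morrison formula gives, on $\rho(T(\tau)) \cap \rho(\Delta(\tau))$,
\begin{equation*}
R(z,\Delta(\tau))x = R(z,T(\tau))x + \frac{\langle x, R(z,T(\tau))^* f\rangle}{1 - FR(z,T(\tau))S(\tau)}\, R(z,T(\tau)) s(\tau).
\end{equation*}
By Lemma~\ref{lem:discrete_time_trans_func_bound}, after possibly shrinking $\tau^*$, the scalar denominator satisfies $|1-FR(z,T(\tau))S(\tau)| \geq \epsilon_d >0$ uniformly on $\rho(T(\tau)) \cap \overline{\mathbb{E}_1}$.

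Next, I would show that $\|R(z,T(\tau)) s(\tau)\| \leq M_1$ for some $M_1$ uniformly on $\{1\leq|z|\leq r_0\}\cap\rho(T(\tau))$ and some $r_0>1$. From \eqref{eq:Resol_S_tau}, the tail $\sum_{n\geq N}$ is uniformly bounded on $\overline{\mathbb{E}_1}\setminus\{1\}$ by exactly the estimate appearing in Step~2 of Lemma~\ref{lem:discrete_time_trans_func_bound}, and in the finite head the denominators $|z-e^{\tau\lambda_n}|$ are bounded below because $|e^{\tau\lambda_n}|\neq 1$ by (A\ref{assump:imaginary}). Combining the Sherman--Morrison identity with $|1-FR(z,T(\tau))S(\tau)|\geq \epsilon_d$, the Cauchy--Schwarz inequality, and $\|R(z,T(\tau))s(\tau)\|\leq M_1$ yields
\begin{align*}
\|R(z,\Delta(\tau))x\|^2 &\leq 2\|R(z,T(\tau))x\|^2 + \frac{2M_1^2}{\epsilon_d^2}\|x\|^2\|R(z,T(\tau))^* f\|^2,\\
\|R(z,\Delta(\tau))^* y\|^2 &\leq 2\|R(z,T(\tau))^* y\|^2 + \frac{2M_1^2}{\epsilon_d^2}\|y\|^2\|R(z,T(\tau))^* f\|^2.
\end{align*}

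It remains to bound the Eisner-type integrals of the three quantities $\|R(\cdot,T(\tau))x\|^2$, $\|R(\cdot,T(\tau))^* y\|^2$, and $\|R(\cdot,T(\tau))^* f\|^2$. I would handle this via the spectral decomposition $X = X^+\oplus X^-$ introduced before Lemma~\ref{lem:T_minus_strong_stable}. On the finite-dimensional $X^+$, $T^+(\tau)$ has spectrum strictly outside $\overline{\mathbb{D}_r}$ for some $r>1$, so $R(\cdot,T^+(\tau))$ and its adjoint are uniformly bounded in a neighborhood of $\mathbb{T}$ and contribute $O(r-1)$ to the integral. On $X^-$, Lemma~\ref{lem:T_minus_strong_stable} gives power boundedness of $(T^-(\tau)^k)_{k\in\mathbb{N}}$, so Theorem~\ref{thm:power_bounded} applied on the Hilbert space $X^-$ supplies the required bound; an analogous argument on the dual decomposition $X = (X^-)^\perp \oplus (X^+)^\perp$ disposes of the adjoint terms, since $T(\tau)^*|_{(X^+)^\perp}$ is similar via a bounded invertible map to $(T^-(\tau))^*$ and hence power bounded. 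Applied with $x$, $y$, and the fixed vector $f$, this verifies Eisner's criterion for $\Delta(\tau)$ and concludes the proof.

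The main obstacle is controlling the cross term in the Sherman--Morrison correction: a product of two resolvent-like quantities that are \emph{a priori} only $L^2$-integrable near the singularity $z=1$ would not survive the weight $(r-1)$. The essential leverage is assumption (A\ref{assump:b_in_invA}), which makes $\|R(z,T(\tau))s(\tau)\|$ uniformly bounded near $\mathbb{T}$ and thereby reduces the cross term to a single $L^2$ resolvent factor acting on the fixed vector $f$, controllable by the Eisner integral for $T(\tau)$.
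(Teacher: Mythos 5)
Your proposal does not address the statement it is meant to prove. Theorem~\ref{thm:power_bounded} is the abstract resolvent characterization: for an \emph{arbitrary} $T \in \mathcal{L}(X)$ on a Hilbert space with $\mathbb{E}_1 \subset \rho(T)$, power boundedness of $(T^k)_{k\in\mathbb{N}}$ is \emph{equivalent} to finiteness of the weighted resolvent integrals for all $x,y$. What you have written is instead a verification of that criterion for the specific operator $\Delta(\tau)$ of the paper, i.e.\ an argument for Theorem~\ref{thm:power_boundedness}; nothing in your text engages with either implication of the abstract equivalence. Worse, your argument explicitly \emph{invokes} Theorem~\ref{thm:power_bounded} twice --- once to convert power boundedness of $(T^-(\tau)^k)_{k\in\mathbb{N}}$ into an integral bound on $X^-$, and once at the end to pass from the integral bounds back to power boundedness of $(\Delta(\tau)^k)_{k\in\mathbb{N}}$ --- so read as a proof of Theorem~\ref{thm:power_bounded} it is circular.

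A proof of the actual statement needs different tools. For the ``only if'' direction one expands $R(re^{i\theta},T)x=\sum_{k\geq 0}r^{-k-1}e^{-i(k+1)\theta}T^kx$ and applies Parseval's identity, so that $(r-1)\int_0^{2\pi}\|R(re^{i\theta},T)x\|^2\,d\theta = 2\pi(r-1)\sum_{k\geq 0}r^{-2k-2}\|T^kx\|^2 \leq \pi\sup_k\|T^k\|^2\|x\|^2$, and likewise for $T^*$. For the ``if'' direction one writes $T^n$ as a Cauchy integral of $z^{n+1}R(z,T)^2$ over the circle $|z|=r$, pairs with $y$, splits $\langle R(z,T)^2x,y\rangle=\langle R(z,T)x,R(z,T)^*y\rangle$, applies Cauchy--Schwarz on the circle to obtain the product of the two integrals in the hypothesis, and optimizes at $r=1+1/n$; the Hilbert-space structure enters essentially through Parseval and this pairing. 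None of this machinery appears in your proposal. (For what it is worth, the paper itself gives no proof of Theorem~\ref{thm:power_bounded} --- it is quoted from Theorem~II.1.12 of \cite{Eisner2010} --- and your argument, read as a proof of Theorem~\ref{thm:power_boundedness}, follows essentially the paper's own Sherman--Morrison-plus-spectral-decomposition route.)
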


To use Theorem~\ref{thm:power_bounded},
we show that $\mathbb{E}_1 \subset \rho (\Delta(\tau))$  holds for all sufficiently small $\tau >0$.
\begin{lemma}
	\label{lem:resol_T_SF}
	Let $A$ be a Riesz-spectral operator on a Hilbert space $X$ whose eigenvalues
	$\{\lambda_n:n \in \mathbb{N} \}$ satisfy
	{\rm (A\ref{assump:finite_unstable})} and $0 
	\in \overline{\{\lambda_n:n \in \mathbb{N} \}} \setminus
	\{\lambda_n:n \in \mathbb{N} \}$.
	Assume that $B \in \mathcal{L}(X,\mathbb{C})$ and
	$F \in \mathcal{L}(\mathbb{C},X)$ in the form of \eqref{eq:B_F_rep}
	satisfy
	{\rm (A\ref{assump:closed_loop})--(A\ref{assump:b_F_cond})}. 
	Then there exists $\tau^* >0$ 
	such that 
	$\mathbb{E}_1 \subset \rho (\Delta(\tau))$ 
	for every $\tau \in (0,\tau^*)$.
\end{lemma}
\begin{proof}
	Lemmas~\ref{lem:continuous_time_trans_func_bound} and
	\ref{lem:discrete_time_trans_func_bound} show that
	there exist $\epsilon >0$ and $\tau^* >0$ such that 
	for every $\tau \in (0,\tau^*)$,
	\begin{align}
	&\tau (\lambda_n - \lambda_m) \not= 2\ell \pi i \qquad 
	\forall \ell \in \mathbb{Z} \setminus \{0\} ,~1 \leq n,m \leq N_{\textrm s}-1
	\label{eq:sampling_cond}
	\\
	\label{eq:1_FRS}
	&\big|1 - F R\big(z, T(\tau)\big)S(\tau)\big| 
	>\epsilon \qquad \forall z \in \rho\big(T(\tau)\big) \cap \overline{\mathbb{E}_1},
	\end{align}
where $ N_{\textrm s} \in \mathbb{N}$ is as given in \eqref{eq:Ns_def}.
	Let $\tau \in (0,\tau^*)$.
	Since
	\[
	zI - \Delta(\tau) =
	\big(zI - T(\tau)\big) 
	\big(I - \big(zI-T(\tau)\big)^{-1} S(\tau)F\big)\qquad \forall z \in  \rho\big(T(\tau) \big),
	\]
	it follows from Lemma~\ref{lem:bounded_operator_spectrum} that
	for every $z \in \rho(T(\tau))$,
	\begin{equation}
	1 \in \rho \big(F R\big(z, T(\tau)\big)S(\tau)\big) \quad \Leftrightarrow \quad 
	z \in \rho \big(\Delta(\tau)\big).
	\end{equation}
	Combining this with 
	\eqref{eq:1_FRS}, we obtain
	\begin{equation}
	\label{eq:T_SF_resol}
	\rho\big(T(\tau)\big) \cap \overline{\mathbb{E}_1} \subset 
	\rho\big(\Delta(\tau)\big).
	\end{equation}
	We see from \eqref{eq:T_SF_resol} that
	if $\sigma(T(\tau)) \cap \mathbb{E}_1 \subset \rho(\Delta(\tau))$,
	then
	the desired conclusion $\mathbb{E}_1 \subset \rho (\Delta(\tau))$ holds.
	Assume, to get a contradiction, that 
	$\sigma(T(\tau)) \cap \mathbb{E}_1  \cap 
	\sigma(\Delta(\tau)) \not=
	\emptyset$, and let
	$z_0 \in 
	\sigma(T(\tau)) \cap \mathbb{E}_1  \cap 
	\sigma(\Delta(\tau))$. 
	By 
	Lemma~\ref{lem:T_spectrum}, 
	\[
	\sigma(T(\tau)) \cap \mathbb{E}_{1} = 
	\{e^{\tau \lambda_n} : 1 \leq n \leq N_{\rm s}-1\},
	\]
	and 
	\eqref{eq:sampling_cond} yields
	$e^{\tau \lambda_n} \not= e^{\tau \lambda_m}$ for
	$1 \leq n,m \leq N_{\rm s}-1$ with $n \not=m$.
	Therefore, there uniquely exists $1 \leq n_0 \leq N_{\rm s}-1$
	such that $z_0 = e^{\tau\lambda_{n_0}}$.
	Since $\Delta(\tau) - T(\tau) = S(\tau)F$ is compact,
	it follows that $z_0$ is an eigenvalue of $\Delta(\tau)$.

	Let $v \in X$ be an eigenvector of $\Delta(\tau)$ corresponding
	to  $z_0$. By \eqref{eq:S_rep},
	\begin{equation}
	\label{eq:z_0_eigvec}
	T(\tau) v+ A^{-1}(T(\tau) - I) BFv = z_0v.
	\end{equation}
	Since
	\begin{align*}
	\langle T(\tau) v, \psi_{n_0} \rangle
	&= e^{\tau \lambda_{n_0}} \langle  v, \psi_{n_0} \rangle \\
	\langle A^{-1}(T(\tau) - I) BFv, \psi_{n_0} \rangle &=
	\frac{e^{\tau \lambda_{n_0}}-1}{\lambda_{n_0}} \langle
	BFv, \psi_{n_0}
	\rangle,
	\end{align*}
	it follows from \eqref{eq:z_0_eigvec} that 
	\[
	\frac{e^{\tau \lambda_{n_0}}-1}{\lambda_{n_0}} \langle
	BFv, \psi_{n_0}
	\rangle = 0.
	\]
	By $\lambda_{n_0} \in \mathbb{C}_0$, we obtain
	$\langle
	BFv, \psi_{n_0}
	\rangle = 0$.
	This occurs if and only if 
	$\langle b, \psi_{n_0} \rangle = 0$ or $Fv = 0$.
		Since 	
	$A+BF$ generates a uniformly bounded semigroup
	by (A\ref{assump:closed_loop}), it follows that 
	$(A,B,-)$ is $\beta$-exponentially 
	stabilizable for every $\beta>0$.
	Hence $\langle b, \psi_{n_0} \rangle \not= 0$ by
	Theorem~8.2.3 of \cite{Curtain2020}, and so 
	$Fv = 0$. Substituting it into \eqref{eq:z_0_eigvec}, we obtain 
	$T(\tau) v = e^{\tau \lambda_{n_0}} v$.
	There exists a nonzero constant $\gamma \in \mathbb{C}$
	such that 
	$v = \gamma \phi_{n_0}$.
	Therefore,
	\[
	(A+BF)v = Av = \lambda_{n_0} v,
	\]
	which implies that $\lambda_{n_0} \in \sigma_p(A+BF)$ and
	contradicts the assumption that $A+BF$ generates a uniformly bounded semigroup. This completes the proof.	
\end{proof}

To study power boundedness based on Theorem~\ref{thm:power_bounded}, we use
the well-known Sherman-Morrison-Woodbury formula
given in the next lemma, which can be obtained from a straightforward calculation.
\begin{proposition}
	\label{Prop:SMW}
	Let $X,U$ be Banach spaces, $A:D(A)\subset X \to X$ be a closed operator,
	$B \in \mathcal{L}(U,X)$, $F \in \mathcal{L}(X,U)$, and $\lambda \in \rho(A)$.
	If $1 \in \rho (FR(\lambda,A)B) $, then $\lambda \in \rho (A+BF)$ and 
	\[
	R(\lambda,A+BF) = R(\lambda,A) + R(\lambda,A) B 
	(I-FR(\lambda,A)B)^{-1} FR(\lambda,A).
	\]
\end{proposition}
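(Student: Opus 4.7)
The plan is to verify the claimed formula by a direct algebraic computation on both sides, after first noting that $A+BF$ with domain $D(A)$ is a well-defined closed operator: indeed $BF \in \mathcal{L}(X)$ since $B$ and $F$ are bounded, so $A+BF$ is the sum of a closed operator and a bounded operator on the same domain, hence closed. Thus to show $\lambda \in \rho(A+BF)$ together with the stated resolvent formula, it suffices to exhibit a bounded two-sided inverse of $\lambda I - A - BF$ on $D(A)$.

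To that end, introduce the shorthand
\[
K := FR(\lambda,A)B \in \mathcal{L}(Y), \qquad C := (I - K)^{-1} \in \mathcal{L}(Y),
\]
which is well defined by the hypothesis $1 \in \rho(FR(\lambda,A)B)$, and satisfies the commuting identities $C(I-K) = (I-K)C = I$, equivalently $C - CK = I = C - KC$. Define the candidate resolvent
\[
M := R(\lambda,A) + R(\lambda,A) B\, C\, F R(\lambda,A) \in \mathcal{L}(X).
\]
Note that $\ran M \subset D(A)$ because $R(\lambda,A)$ maps $X$ into $D(A)$ and $B$ maps $Y$ into $X$.

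The next step is to compute $(\lambda I - A - BF)M$ on $X$. Using $(\lambda I - A)R(\lambda,A) = I$ gives
\[
(\lambda I - A)M = I + BCFR(\lambda,A),
\]
while, since $FR(\lambda,A)B = K$,
\[
BF\, M = BFR(\lambda,A) + BKCFR(\lambda,A).
\]
Subtracting yields
\[
(\lambda I - A - BF)M = I + B(C - KC)FR(\lambda,A) - BFR(\lambda,A) = I,
\]
using $C - KC = I$. Symmetrically, for $x \in D(A)$,
\[
M(\lambda I - A)x = x + R(\lambda,A)BCFx, \qquad M\,BFx = R(\lambda,A)BFx + R(\lambda,A)BCKFx,
\]
and $C - CK = I$ gives $M(\lambda I - A - BF)x = x$.

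Having produced a bounded left and right inverse of $\lambda I - A - BF$, we conclude that $\lambda \in \rho(A+BF)$ with $R(\lambda, A+BF) = M$, which is the asserted identity. There is no real obstacle here beyond keeping track of domains and the order of the factors; the only substantive input is that $C$ commutes with $I - K$, so the cross terms telescope in both computations.
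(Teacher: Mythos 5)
Your verification is correct: the identities $(\lambda I - A - BF)M = I$ on $X$ and $M(\lambda I - A - BF)x = x$ on $D(A)$ both check out, using $C - KC = C - CK = I$. The paper omits the proof, describing it only as a ``straightforward calculation,'' and your direct two-sided-inverse computation is exactly that intended calculation, so the approaches coincide.
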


After these preparations, we are now ready to prove that the discrete semigroup
$(\Delta(\tau)^k)_{k \in \mathbb{N}}$ is power bounded for
all sufficiently small $\tau >0$. The proof is inspired by Paunonen's proof 
of Theorem~4 in \cite{Paunonen2014JDE}.
\begin{proof}[Proof of Theorem~\ref{thm:power_boundedness}]
	By Lemmas~\ref{lem:continuous_time_trans_func_bound},
	\ref{lem:discrete_time_trans_func_bound},
	and \ref{lem:resol_T_SF},
	there exist $\tau^*  >0$ and  $M_0>0$ such that 
	for every $\tau \in (0,\tau^*)$, we obtain $\mathbb{E}_1 \subset \rho (\Delta(\tau))$ and 
	\begin{align*}
	&\left|\frac{1}{1 - FR\big(z, T(\tau)\big) S(\tau) } \right| \leq M_0\qquad 
	\forall z \in \rho \big(T(\tau)\big) \cap \overline{\mathbb{E}_1}.
	\end{align*}
	Let $\tau \in (0,\tau^*)$ be given.
	By Theorem~\ref{thm:power_bounded}, it suffices to show that
	\begin{equation}
	\label{eq:resol_estimate_for_PB}
	\limsup_{r \downarrow 1} ~(r-1)
	\int^{2\pi}_0 \big(
	\big\|R\big(re^{i\theta}, \Delta(\tau)\big)x\big\|^2 + \big\|R\big(re^{i\theta},\Delta(\tau)\big)^* y\big\|^2 
	\big) d\theta < \infty
	\end{equation} 
	for every $x,y \in X$.
	Since
	$\sigma(T(\tau)) = \overline{\{e^{\tau \lambda_n} :
		n \in \mathbb{N}\}}$
	by 	Lemma~\ref{lem:T_spectrum},
	it follows from
	(A\ref{assump:finite_unstable}) that
	there exists $r_1 >1$ such that
	$re^{i\theta} \in \rho(T(\tau))$ 
	for every $r \in (1,r_1)$ and every $\theta \in [0,2\pi)$.
	Since
	the Sermann-Morrison-Woodbury formula given in Proposition~\ref{Prop:SMW} 
	yields
	\[
	R(re^{i\theta}, T(\tau)+S(\tau)F) x =
	R\big(re^{i\theta}, T(\tau)\big) x +
	\frac{R\big(re^{i\theta}, T(\tau)\big) S(\tau)FR\big(re^{i\theta}, T(\tau)\big)  x }
	{1 - FR\big(re^{i\theta}, T(\tau)\big) S(\tau) }
	\]
	for all $x \in X$ and all $r \in (1,r_1)$,
	we can estimate
	\begin{align}
	&\int^{2\pi}_0 
	\|R(re^{i\theta}, T(\tau)+S(\tau)F) x\|^2 d\theta \notag \\
	&\hspace{40pt} \leq
	2 \int^{2\pi}_0 
	\big\|R\big(re^{i\theta}, T(\tau)\big) x\big\|^2 d\theta 		\label{eq:R_square_int}\\
	&\hspace{60pt}  + 2M_0^2 \|x\|^2 \int^{2\pi}_0 \big\|R\big(re^{i\theta}, T(\tau)\big) S(\tau)\big\|^2 
	\cdot
	\big\|FR\big(re^{i\theta}, T(\tau)\big)\big\|^2 d\theta \notag 
	\end{align}
	for all $x \in X$ and all $r \in (1,r_1)$.

	To estimate the first term of the 
	right-hand side of \eqref{eq:R_square_int}, we
	apply the spectral decomposition by the projection $\Pi$
	given in \eqref{eq:projection}. 
	Take $x \in X$. Then $x = x^+ + x^-$ with $x^+:= \Pi x \in X^+$ and 
	$x^- := (I -\Pi)x \in X^-$.
	There exists $c_1 >0$ such that $|r e^{i \theta} - e^{\tau \lambda_n} | \geq c_1$ for all $r \in (1,r_1)$,
	$\theta \in [0,2\pi)$, and
	$1 \leq n \leq  N_{\rm s}-1$. Therefore,
	 \eqref{eq:RB_constant} and Lemma~\ref{lem:T_spectrum} yield
	\begin{align*}
	\int^{2\pi}_0 
	\big\|R\big(r e^{i \theta}, T(\tau)\big) x^+\big\|^2 d\theta
	&\leq M_{\rm b} \sum_{n=1}^{N_{\rm s}-1} |\langle 
	x^+ , \psi_n
	\rangle|^2 \int^{2\pi}_0 \frac{1}{|re^{i\theta} - e^{\tau \lambda_n} |^2 } d\theta \\
	&\leq \frac{2\pi M_{\rm b}}{ c_1^2} \sum_{n=1}^{N_{\rm s}-1} |\langle 
	x^+ , \psi_n
	\rangle|^2 .
	\end{align*}
	Therefore,
	\[
	\limsup_{r\downarrow 1} ~(r-1)
	\int^{2\pi}_0 
	\big\|R\big(r e^{i \theta}, T(\tau)\big) x^+\big\|^2  d\theta = 0.
	\]
	Since 
	the discrete semigroup
	$(T^-(\tau)^k )_{k \in \mathbb{N}}$ is power bounded 
	by Lemma~\ref{lem:T_minus_strong_stable},  we see from
	Theorem~\ref{thm:power_bounded} that
	\[
	\limsup_{r\downarrow 1} ~(r-1)
	\int^{2\pi}_0 
	\big\|R\big(r e^{i \theta}, T(\tau)\big) x^-\big\|^2  d\theta < \infty.
	\]
	Consequently,
	\begin{equation}
	\label{eq:Resol_T_int}
	\limsup_{r\downarrow 1} ~(r-1)
	\int^{2\pi}_0 
	\big\|R\big(re^{i\theta}, T(\tau)\big) x\big\|^2 d\theta < \infty.
	\end{equation}
	
	We next investigate 
	the second term of the right-hand side of \eqref{eq:R_square_int}.
	Using \eqref{eq:RB_constant} and 
	\eqref{eq:Resol_S_tau}, we have that for every 
	$r e^{i\theta} \in \rho(T(\tau))$,
	\[
	\big\|R\big(re^{i\theta}, T(\tau)\big) S(\tau)\big\|^2 \leq 
	M_{\rm b} \sum_{n=1}^{\infty} 
	\left|
	\frac{1 - e^{\tau \lambda_n}}{r e^{i\theta} - e^{\tau \lambda_n}}
	\right|^2 \cdot  \left| \frac{\langle b, \psi_n \rangle }{\lambda_n}\right|^2 .
	\]
	Under (A\ref{assump:finite_unstable}),
	there is $N_1 \in \mathbb{N}$ such that
	$\lambda_n \in \mathbb{C} \setminus \mathbb{C}_{-\alpha}$ or
	$\lambda_n \in \mathbb{C} \setminus  
	\Sigma_{\pi/2+\delta}$ for every $N \geq N_1$.
	As shown in \eqref{eq:discrete_time_transfer_suff_large}
	in Step 2 of the proof of Lemma~\ref{lem:discrete_time_trans_func_bound},
	there exists $M_1 >0$ such that
	for every $z \in \overline{\mathbb{E}_1} \setminus \{1 \}$,
	\[
	\sum_{n=N_1}^{\infty} 
	\left|
	\frac{1 - e^{\tau \lambda_n}}{z - e^{\tau \lambda_n}}
	\right|^2\cdot  \left| \frac{\langle b, \psi_n \rangle }{\lambda_n}\right|^2  \leq M_1.
	\]
	It follows from (A\ref{assump:imaginary}) that
	there exist 
	$c_2>0$ and $M_2 >0$ such that
	for all $1\leq n \leq N_1-1$, 
	\begin{align*}
	&|re^{i\theta} - e^{\tau \lambda_n}| \geq c_2\qquad \forall r \in (1,r_1),~ \forall \theta \in [0,2\pi) \\
	&|1-e^{\tau \lambda_n}| \leq 1+|e^{\tau \lambda_n}| \leq M_2.
	\end{align*}
	By these inequalities,
	\[
	\sum_{n=1}^{N_1 -1} 
	\left|
	\frac{1 - e^{\tau \lambda_n}}{r e^{i\theta} - e^{\tau \lambda_n}}
	\right|^2 \cdot \left| \frac{\langle b, \psi_n \rangle }{\lambda_n}\right|^2 \leq 
	\left(\frac{M_2}{c_2} \right)^2
	\sum_{n=1}^{N_1-1} \left| \frac{\langle b, \psi_n \rangle }{\lambda_n}\right|^2 ~~\quad 
	\forall r \in (1,r_1),~\forall \theta \in [0,2\pi).
	\]
	Hence we obtain
	\begin{equation}
	\label{eq:R_S_bound}
	\big\|
	R\big(re^{i\theta}, T(\tau)\big) S(\tau)\big\|^2 \leq M_3\qquad \forall r\in (1,r_1),~\forall \theta \in [0, 2\pi)
	\end{equation}
	for some $M_3 > 0$.
	
	Using the estimate \eqref{eq:R_S_bound}, we have that
	for every $r\in (1,r_1)$,
	\begin{align}
	\label{eq:RS_FR_bound}
	\int^{2\pi}_0 \|R(re^{i\theta}, T(\tau)) S(\tau)\|^2 \cdot  \|FR(re^{i\theta}, T(\tau))\|^2 d\theta 
	&\leq 
	M_3 \int^{2\pi}_0  \|R(re^{i\theta}, T^*(\tau)) F^*\|^2 d\theta .
	\end{align}
	The adjoint semigroup $(T^*(t) )_{t\geq 0}$ is given by
	\[
	T^*(t) x = \sum_{n=1}^\infty e^{t \overline{\lambda_n}} \langle x ,\phi_n \rangle \psi_n\qquad \forall x \in X,~\forall t \geq 0,
	\]
	and its generator is $A^*$; see, e.g., Theorem 2.3.6 of \cite{Curtain2020}.
		Define the operator $A_1:
		D(A_1) \subset X \to X$ by
	\[
	A_1 x := \sum_{n=1}^\infty \overline{\lambda_n} \langle 
	x, \phi_n
	\rangle \psi_n 
	\]
	with domain
	\[
	D(A_1) := \left\{
	x \in X :
	\sum_{n=1}^\infty 
	\left|
	\lambda_n
	\right|^2 \cdot \left|
	\langle x, \phi_n \rangle
	\right|^2 < \infty
	\right\}.
	\]
	By Corollary 3.2.10 of \cite{Curtain2020}, $A_1$
	is a Riesz-spectral operator and generates the semigroup 
	$(T^*(t) )_{t\geq 0}$. Therefore,
	$A_1 = A^*$.
	Similarly to \eqref{eq:Resol_T_int}, we obtain
	\begin{equation}
	\label{eq:T_adjoint_bound}
	\limsup_{r\downarrow 1} ~(r-1)
	\int^{2\pi}_0 
	\big\|R\big(re^{i\theta}, T^*(\tau)\big) y
	\big\|^2 d\theta < \infty\qquad \forall y \in X.
	\end{equation}
	Since 
	$F^*u = fu$ for every $u \in \mathbb{C}$, it follows from
	\eqref{eq:RS_FR_bound} and
	\eqref{eq:T_adjoint_bound} that 
	\begin{equation}
	\label{eq:RSFR_bound}
	\limsup_{r\downarrow 1} ~(r-1)
	\int^{2\pi}_0 \big\|R\big(re^{i\theta}, T(\tau)\big) S(\tau)\big\|^2 \cdot
	\big\|FR\big(re^{i\theta}, T(\tau)\big)\big\|^2 d\theta < \infty.
	\end{equation}
	
	Applying the estimates \eqref{eq:Resol_T_int} and \eqref{eq:RSFR_bound}
	to \eqref{eq:R_square_int},
	we obtain
	\begin{align*}
	\limsup_{r\downarrow 1} ~(r-1)
	\int^{2\pi}_0 
	\big\|R\big(re^{i\theta}, \Delta(\tau)\big) x\big\|^2 d\theta < \infty.
	\end{align*}
	We have from
	a similar calculation  that
	\begin{align*}
	\limsup_{r\downarrow 1} ~(r-1)
	\int^{2\pi}_0 
	\big\|R\big(re^{i\theta}, \Delta(\tau)\big)^* y\big\|^2 d\theta < \infty\qquad \forall y\in X,
	\end{align*}
	using the following estimate:
	\begin{align*}
	&\int^{2\pi}_0 
	\|R(re^{i\theta}, T(\tau)+S(\tau)F)^* y\|^2 d\theta  \\
	&\quad =
	\int^{2\pi}_0 
	\left\|R\big(re^{i\theta}, T(\tau)\big)^* y +
	\left[ \frac{R\big(re^{i\theta}, T(\tau)\big) 
		S(\tau)FR\big(re^{i\theta}, T(\tau)\big) }
	{1 - FR\big(re^{i\theta}, T(\tau)\big) S(\tau) F} \right]^* y \right\|^2 d\theta 
	\\
	&\quad \leq
	2 \int^{2\pi}_0 
	\big\|R\big(re^{i\theta}, T^*(\tau)\big) y\big\|^2 d\theta \\
	&\qquad + 2M_0^2 \|y\|^2 \int^{2\pi}_0 
	\big\|R\big(re^{i\theta}, T(\tau)\big) S(\tau)\big\|^2 \cdot 
	\big\|FR\big(re^{i\theta}, T(\tau)\big)\big\|^2 d\theta
	\end{align*}
	for all $y \in X$ and all $r \in (1,r_1)$.
	Thus, 
	the desired estimate \eqref{eq:resol_estimate_for_PB} is
	obtained for every $x,y \in X$.
\end{proof}

We see from Theorems
\ref{thm:T_SF_resolvent_set} and \ref{thm:power_boundedness} that
the sufficient condition for strong stability in the Arendt-Batty-Lyubich-V\~u theorem
is satisfied. We finally prove the main theorem of this article, Theorem~\ref{thm:SD_SS}.
\begin{proof}[Proof of Theorem~\ref{thm:SD_SS}]
	There exists $\tau^*>0$ such that 
	for every $\tau \in (0,\tau^*)$,
	\begin{enumerate}
		\def\theenumi{\roman{enumi}}
		\def\labelenumi{\theenumi}
		\renewcommand{\labelenumi}{(\roman{enumi})}
		\item 
		$\sigma_p(\Delta(\tau)) \cap
		\mathbb{T}  = \emptyset$ and
		$\sigma(\Delta(\tau)) \cap \mathbb{T} = \{1 \}$
		by Theorem~\ref{thm:T_SF_resolvent_set}; and
		\item the discrete semigroup 
		$(\Delta(\tau)^k)_{k \in \mathbb{N}}$
		is power bounded by Theorem~\ref{thm:power_boundedness}.
	\end{enumerate}
	\def\theenumi{\alph{enumi})}
	\def\labelenumi{\theenumi}
	\def\theenumii{(\roman{enumii})}
	\def\labelenumii{\theenumii}
	By the Arendt-Batty-Lyubich-V\~u theorem, Theorem~\ref{thm:ABLV_disc}, 
	$(\Delta(\tau)^k)_{k \in \mathbb{N}}$ is strongly stable.
	This and Proposition~\ref{prop:SD_DT} show that 
	the sampled-data system \eqref{eq:sampled_data_sys} is strongly stable.
\end{proof}

We conclude this section by applying Theorem~\ref{thm:SD_SS}
to an infinite-dimensional system whose generator is a simple
diagonal operator.
\begin{example}
	\label{ex:diagonal_case}
	{\em
		Let $X = \ell^2(\mathbb{C})$ with 
		standard basis $\{\phi_n : n \in \mathbb{N}\}$, $N_{\rm s} \in \mathbb{N}$, and 
		$\{\lambda_n \in \mathbb{C}_0: 
		1 \leq n \leq N_{\rm s}-1\}$ be distinct. Define $A \in \mathcal{L}(X)$ by
		\[
		Ax :=  \sum_{n=1}^{N_{\rm s}-1} \lambda_n \langle x,\phi_n \rangle \phi_n + 
		\sum_{n=N_{\rm s}}^{\infty} -\frac{1}{n}\langle x,\phi_n \rangle \phi_n.
		\]
		The operator $A$ satisfies
		(A\ref{assump:finite_unstable})--(A\ref{assump:origin}). 
		Let 
		$b \in X$ and 
		the control operator $B \in \mathcal{L}(\mathbb{C},X)$ be represented as
		$Bu = bu$ for $u \in \mathbb{C}$.
		We apply the spectral decomposition by the projection $\Pi$
		given in \eqref{eq:projection}, and we
		define $B^+ := \Pi B$ and $B^- := (I-\Pi) B$.
		Suppose that $b$ satisfies 
		\begin{equation}
		\label{eq:b_cond_ex}
		\langle b,\phi_n \rangle \not= 0,\quad 1 \leq n \leq N_{\rm s} - 1;\qquad 
		\sum_{n=N_{\rm s}}^{\infty} n^2 |\langle b,\phi_n \rangle |^2 < \infty.
		\end{equation}
		These conditions are equivalent to
		the controllability of the unstable part $(A^+,B^+)$ and (A\ref{assump:b_in_invA}), respectively.
		
		Since $(A^+,B^+)$ is controllable,
		there exists
		$f_1 \in X$ such that the matrix
		\begin{align*} 
		\begin{bmatrix}
		\lambda_1 & & 0 \\
		& \ddots &  \\
		0 & & \lambda_{N_{\rm s}-1}
		\end{bmatrix} + 
		\begin{bmatrix}
		\langle b, \phi_1 \rangle \\
		\vdots \\
		\langle b, \phi_{N_{\rm s}-1} \rangle
		\end{bmatrix}
		\begin{bmatrix}
		\langle \phi_1, f_1 \rangle & \cdots & 
		\langle \phi_{N_{\rm s} - 1}, f_1 \rangle
		\end{bmatrix}
		\end{align*}		 
		is Hurwitz and 
		\begin{align*} 
		\langle \phi_n,f_1 \rangle = 0\qquad \forall n \geq N_{\rm s}.
		\end{align*}
		Let $F_1 \in \mathcal{L}(X,\mathbb{C})$ be represented as $F_1x = \langle x, f_1 \rangle$ for $x \in X$,
		and define $F_1^+ := F_1|_{X^+}$.
		Then $\rho(A^++B^+F_1^+) \supset \overline{\mathbb{C}_0}$.
		For every $\lambda \in \rho(A^++B^+F_1^+) \cap \rho(A^-)$, we obtain 
		$\lambda \in \rho(A+BF_1)$ and write
		\begin{align}
		\label{eq:R_ABF_resol_ex}
		R(\lambda, A+BF_1) = 
		\begin{bmatrix}
		R(\lambda, A^++B^+F_1^+) & 0 \\
		R(\lambda,A^-)B^-F_1^+ R(\lambda, A^++B^+F_1^+) & R(\lambda,A^-)
		\end{bmatrix}
		\end{align}
		under the decomposition $X = X^+ \oplus X^-$. Moreover,
		we see from Theorem~\ref{thm:RS_C0}~a) that
		\begin{equation}
		\label{eq:A_minus_resol_ex}
		\|R(i\omega,A^{-})\| = \frac{1}{|\omega|}.
		\end{equation}
		By  \eqref{eq:R_ABF_resol_ex} and \eqref{eq:A_minus_resol_ex},
		the generator $\widetilde A := A+BF_1$  satisfies
		the condition \eqref{eq:imaginary_axis_RS_case} in 
		(A\ref{assump:closed_loop}). It is not difficult to see that 
		the other conditions in (A\ref{assump:closed_loop}) are also satisfied.
		We find that 
		the feedback operator $F_1$ satisfies (A\ref{assump:b_F_cond}), by
		adding a small perturbation if needed.
		Therefore, we can apply Theorem~\ref{thm:SD_SS} to the sampled-data system
		with the feedback operator $F_1$.
		However, the discrete-time counterpart of Lemma~20 in \cite{Hamalainen2010} immediately shows
		that the structured feedback operator $F_1$ achieves the strong stability of
		the sampled-data system. 
		To illustrate the effectiveness of Theorem~\ref{thm:SD_SS}, we here consider feedback operators
		that affect the stable part $(A^-, B^-)$.
		
		By \eqref{eq:R_ABF_resol_ex} and \eqref{eq:A_minus_resol_ex}, we obtain
		\[
		\sup_{0 < |\omega| \leq 1} |\omega| \cdot
		\| R(i\omega, \widetilde A) \| < \infty.
		\]
		Furthermore, $b \in  D(\widetilde A^{-1})$ holds. Indeed,
		since $b^- := (I-\Pi)b \in D((A^-)^{-1})$ by the latter condition on $b$
		given in \eqref{eq:b_cond_ex},
		there exists $x_b^- \in X^-$ such that 
		$b^-  = A^- x_b^-$.	
		We obtain 
		\[
		\widetilde A\begin{bmatrix}
		x^+ \\ x^-
		\end{bmatrix} =
		\begin{bmatrix}
		(A^++B^+F_1)x^+ \\ B^-F_1x^+ + A^-x^-
		\end{bmatrix}\qquad \forall x^+ \in X^+,~\forall x^- \in X^-.
		\]
		Since $A^++B^+F_1$ is invertible, there exists $x_0^+ \in X^+$ such that 
		$\Pi b = (A^++B^+F_1)x_0^+$.
		Moreover, if we set $x_0^- := (1-F_1 x_0^+)x^-_b$, then 
		\[
		B^-F_1x_0^+ + A^-x_0^- = (F_1 x_0^+) b^- + (1-F_1 x_0^+)b^- = b^-.
		\]
		Hence $b \in  \ran (\widetilde A) = D(\widetilde A^{-1})$.
		
		Theorem~4 of \cite{Paunonen2014JDE} shows that 
		there exists $\kappa>0$ such that $\widetilde A+BF_2=  A+B(F_1+F_2)$ satisfies the conditions in (A\ref{assump:closed_loop})
		for every $F_2 \in \mathcal{L}(X,\mathbb{C})$
		with $\|F_2\| < \kappa$.
		As in the case of the structured feedback operator $F_1$, we see that 
		$F:= F_1+F_2$ also satisfies (A\ref{assump:b_F_cond}),
		by adding a small perturbation if necessary. Thus
		Theorem~\ref{thm:SD_SS} can be applied to the sampled-data system
		with the nonstructured feedback operator $F$.
	}
\end{example}

\section{Concluding remarks}
In this paper, we have analyzed robustness of strong stability
with respect to sampling. We have limited our attention to the situation
where the generator $A$ is a Riesz-spectral operator and $0 \in \sigma(A) \setminus \sigma_p(A)$.
We have presented conditions under which 
the sufficient condition for strong stability in the 
the Arendt-Batty-Lyubich-V\~u theorem is preserved 
between the original continuous-time system and
the sampled-data system
under fast sampling. Our future work is to analyze robustness of polynomial stability
with respect to sampling.
\label{sec:conclusion}

\section*{Acknowledgements}
We would like to thank the anonymous reviewers for their valuable
comments, which
helped us to
improve the presentation of this paper and 
to shorten the proofs of 
Lemmas~\ref{lem:spectal_cond}, \ref{lem:continuous_time_trans_func_bound},
and
\ref{lem:resol_T_SF}.


\begin{thebibliography}{10}
	
	\bibitem{Arendt1998}
	{\sc W.~Arendt and C.~J.~K. Batty}, {\em Tauberian theorems and stability of
		one-parameter semigroups}, Trans. Amer. Math. Soc., 309 (1988), pp.~837--852.
	
	\bibitem{Besseling2010}
	{\sc N.~Besseling and H.~Zwart}, {\em {Stability analysis in continuous and
			discrete time, using the Cayley transform}}, Integr. Equ. Oper. Theory, 68
	(2010), pp.~487--502.
	
	\bibitem{Curtain2020}
	{\sc R.~F. Curtain and H.~J. Zwart}, {\em An Introduction to
		Infinite-Dimensional Systems, A State Space Approach}, New York: Springer,
	2020.
	
	\bibitem{Eisner2010}
	{\sc T.~Eisner}, {\em Stability of Operators and Operator Semigroups}, Basel:
	Birkh\"auser, 2010.
	
	\bibitem{Engel2000}
	{\sc K.-J. Engel and R.~Nagel}, {\em One-Parameter Semigroups for Linear
		Evolution Equations}, New York: Springer, 2000.
	
	\bibitem{Gohberg1990}
	{\sc I.~Gohberg, S.~Goldberg, and M.~A. Kaashoek}, {\em Classes of Linear
		Operators, Vol. I}, Basel: Birkh\"auser, 1990.
	
	\bibitem{Gomilko1999}
	{\sc A.~M. Gomilko}, {\em {Conditions on the generator of a uniformly bounded
			$C_0$-semigroup}}, Funct. Anal. Appl., 33 (1999), pp.~294--296.
	
	\bibitem{Guo2019book}
	{\sc B.-Z. Guo and J.-M. Wang}, {\em Control of Wave and Beam PDEs: The Riesz
		Basis Approach}, Cham: Springer, 2019.
	
	\bibitem{Guo2006}
	{\sc B.-Z. Guo and H.~Zwart}, {\em {On the relation between stability of
			continuous-and discrete-time evolution equations via the Cayley transform}},
	Integr. Equ. Oper. Theory, 54 (2006), pp.~349--383.
	
	\bibitem{Hamalainen2010}
	{\sc T.~H\"am\"al\"ainen and S.~Pohjolainen}, {\em Robust regulation of
		distributed parameter systems with infinite-dimensional exosystems}, SIAM J.
	Control Optim., 48 (2010), pp.~4846--4873.
	
	\bibitem{Kang2018Automatica}
	{\sc W.~Kang and E.~Fridman}, {\em {Distributed sampled-data control of
			Kuramoto-Sivashinsky equation}}, Automatica, 95 (2018), pp.~514--524.
	
	\bibitem{Karafyllis2018}
	{\sc I.~Karafyllis and M.~Krstic}, {\em {Sampled-data boundary feedback control
			of 1-D parabolic PDEs}}, Automatica, 87 (2018), pp.~226--237.
	
	\bibitem{Ke2009SIAM}
	{\sc Z.~Ke, H.~Logemann, and R.~Rebarber}, {\em Approximate tracking and
		disturbance rejection for stable infinite-dimensional systems using
		sampled-data low-gain control}, SIAM J. Control Optim., 48 (2009),
	pp.~641--671.
	
	\bibitem{Ke2009IEEE}
	{\sc Z.~Ke, H.~Logemann, and R.~Rebarber}, {\em A sampled-data servomechanism
		for stable well-posed systems}, IEEE Trans. Automat. Control, 54 (2009),
	pp.~1123--1128.
	
	\bibitem{Ke2009SCL}
	{\sc Z.~Ke, H.~Logemann, and S.~Townley}, {\em Adaptive sampled-data integral
		control of stable infinite-dimensional linear systems}, Systems Control
	Lett., 58 (2009), pp.~233--240.
	
	\bibitem{Lin2020}
	{\sc P.~Lin, H.~Liu, and G.~Wang}, {\em Output feedback stabilization for heat
		equations with sampled-data controls}, J. Differential Equ., 268 (2020),
	pp.~5823--5854.
	
	\bibitem{Logemann2013}
	{\sc H.~Logemann}, {\em Stabilization of well-posed infinite-dimensional
		systems by dynamic sampled-data feedback}, SIAM J. Control Optim., 51 (2013),
	pp.~1203--1231.
	
	\bibitem{Logemann2003}
	{\sc H.~Logemann, R.~Rebarber, and S.~Townley}, {\em Stability of
		infinite-dimensional sampled-data systems}, Trans. Amer. Math. Soc., 355
	(2003), pp.~3301--3328.
	
	\bibitem{Logemann2005}
	{\sc H.~Logemann, R.~Rebarber, and S.~Townley}, {\em Generalized sampled-data
		stabilization of well-posed linear infinite-dimensional systems}, SIAM J.
	Control Optim., 44 (2005), pp.~1345--1369.
	
	\bibitem{Logemann1997}
	{\sc H.~Logemann and S.~Townley}, {\em Discrete-time low-gain control of
		uncertain infinite-dimensional systems}, IEEE Trans. Automat. Control, 42
	(1997), pp.~22--37.
	
	\bibitem{Lyubich1988}
	{\sc Y.~I. Lyubich and V.~Q. Ph\^ong}, {\em {Asymptotic stability of linear
			differential equations in Banach spaces}}, Studia Math., 88 (1988),
	pp.~37--42.
	
	\bibitem{Paunonen2011}
	{\sc L.~Paunonen}, {\em {Perturbation of strongly and polynomially stable
			Riesz-spectral operators}}, Systems Control Lett., 60 (2011), pp.~234--248.
	
	\bibitem{Paunonen2012SS}
	{\sc L.~Paunonen}, {\em Robustness of strongly and polynomially stable
		semigroups}, J. Funct. Anal., 263 (2012), pp.~2555--2583.
	
	\bibitem{Paunonen2013SS}
	{\sc L.~Paunonen}, {\em Robustness of polynomial stability with respect to
		unbounded perturbations}, Systems Control Lett., 62 (2013), pp.~331--337.
	
	\bibitem{Paunonen2014JDE}
	{\sc L.~Paunonen}, {\em Robustness of strong stability of semigroups}, J.
	Differential Equ., 257 (2014), pp.~4403--4436.
	
	\bibitem{Paunonen2015Springer}
	{\sc L.~Paunonen}, {\em {On robustness of strongly stable semigroups with
			spectrum on $i\mathbb {R}$}}, in Semigroups of Operators -Theory and
	Applications, Cham: Springer, 2015, pp.~105--121.
	
	\bibitem{Paunonen2015}
	{\sc L.~Paunonen}, {\em Robustness of strong stability of discrete semigroups},
	Systems Control Lett., 75 (2015), pp.~35--40.
	
	\bibitem{Rastogi2020}
	{\sc S.~Rastogi and S.~Srivastava}, {\em Strong and polynomial stability for
		delay semigroups}, J. Evol. Equ., 21 (2021), pp.~441--472.
	
	\bibitem{Rebarber1998}
	{\sc R.~Rebarber and S.~Townley}, {\em Generalized sampled data feedback
		control of distributed parameter systems}, Systems Control Lett., 34 (1998),
	pp.~229--240.
	
	\bibitem{Rebarber2002}
	{\sc R.~Rebarber and S.~Townley}, {\em {Nonrobustness of closed-loop stability
			for infinite-dimensional systems under sample and hold}}, IEEE Trans.
	Automat. Control, 47 (2002), pp.~1381--1385.
	
	\bibitem{Rebarber2006}
	{\sc R.~Rebarber and S.~Townley}, {\em {Robustness with respect to sampling for
			stabilization of Riesz spectral systems}}, IEEE Trans. Automat. Control, 51
	(2006), pp.~1519--1522.
	
	\bibitem{Rebarber2006MTNS}
	{\sc R.~Rebarber and S.~Townley}, {\em {Sampled-data control of
			infinite-dimensional systems: Recent developments and open problems}}, in
	Proc. 17th MTNS, 2006.
	
	\bibitem{Shi2000}
	{\sc D.-H. Shi and D.-X. Feng}, {\em {Characteristic conditions of the
			generation of $C_0$ semigroups in a Hilbert space}}, J. Math. Anal. Appl.,
	247 (2000), pp.~356--376.
	
	\bibitem{Tarn1988}
	{\sc T.~J. Tarn, J.~R. Zavgern, and X.~Zeng}, {\em Stabilization of
		infinite-dimensional systems with periodic gains and sampled output},
	Automatica, 24 (1988), pp.~95--99.
	
	\bibitem{Tucsnak2009}
	{\sc M.~Tucsnak and G.~Weiss}, {\em Observation and Control of Operator
		Semigroups}, Basel: Birkh\"auser, 2009.
	
	\bibitem{Wakaiki2019}
	{\sc M.~Wakaiki and H.~Sano}, {\em Sampled-data output regulation of unstable
		well-posed infinite-dimensional systems with constant reference and
		disturbance signals}, Math. Control Signals Systems, 32 (2020), pp.~43--100.
	
	\bibitem{Wakaiki2020SCL}
	{\sc M.~Wakaiki and Y.~Yamamoto}, {\em Stability analysis of perturbed
		infinite-dimensional sampled-data systems}, Systems Control Lett., 138
	(2020), pp.~1--8, Article~104652.
	
\end{thebibliography}
\end{document}